\newtheorem{theorem}{Theorem}[section]
\newtheorem{corollary}[theorem]{Corollary}
\newtheorem{lemma}[theorem]{Lemma}
\newtheorem{proposition}[theorem]{Proposition}
\theoremstyle{definition}
\newtheorem{remark}[theorem]{Remark}
\theoremstyle{definition}
\theoremstyle{definition}
\newtheorem{assumption}[theorem]{Assumption}
\def\dashint{\operatorname%
{\,\,\text{\bf--}\kern-.98em\DOTSI\intop\ilimits@\!\!}}
\def\.5{\frac{1}{2}}
\def\bR{\mathbb{R}}
\def\bH{\mathbb{H}}
\def\cD{\mathcal{D}}
\def\h{\mathcal{H}}
\def\o{\mathcal{O}}
\def\l{\mathcal{L}}
\begin{document}
\title[Conormal problem]
{Conormal problem of higher-order parabolic systems}

\author[H. Dong]{Hongjie Dong}
\address[H. Dong]{Division of Applied Mathematics, Brown University,
182 George Street, Providence, RI 02912, USA}
\email{Hongjie\_Dong@brown.edu}
\thanks{H. Dong was partially supported by the NSF under agreement DMS-1056737.}

\author[H. Zhang]{Hong Zhang}
\address[H. Zhang]{Division of Applied Mathematics, Brown University,
182 George Street, Providence, RI 02912, USA}
\email{Hong\_Zhang@brown.edu}
\thanks{H. Zhang was partially supported by the NSF under agreement DMS-1056737.}

\subjclass[2010]{35J58, 35R05, 35B45}

\keywords{higher-order
systems, $L_p$ estimates, Schauder estimates}

\begin{abstract}
The paper is a comprehensive study of the $L_p$ and the Schauder estimates for higher-order divergence type parabolic systems with discontinuous coefficients in the half space and cylindrical domains with conormal derivative boundary condition.
For the $L_p$ estimates, we assume that the leading coefficients are only bounded measurable in the $t$ variable and $VMO$ with respect to $x$. We also prove the Schauder estimates in two situations: the coefficients are  H\"older continuous only in the $x$ variable; the coefficients are H\"older continuous in both variables.
\end{abstract}

\maketitle

\setcounter{tocdepth}{1} \tableofcontents

\section{Introduction}
This paper is devoted to the study of the $L_p$ and Schauder estimates for higher-order divergence type parabolic systems with conormal derivative boundary condition.

Many authors have studied the $L_p$ estimates {for} parabolic systems with discontinuous coefficients.  It is of particular interest not only because of its various important applications in nonlinear equations, but also due to its subtle link with the theory of stochastic processes. A good reference is \cite{Kry04}.

In many references including, for example, \cite{ADN64}, the $L_p$ estimates for systems with constant or continuous coefficients are obtained by relying on the exact representation of solutions and the Calder\'{o}n--Zygmund theorem. Another approach for such $L_p$ estimates is that of Campanato--Stampachia using Stampachia's interpolation theorem (see \cite{Gia93}). In this paper, we expand the $L_p$ theory of higher-order parabolic systems to include a class of discontinuous coefficients.

For systems in the whole space and in the half space with the Dirichlet boundary condition, the first author and Kim \cite{DK10} obtained the $L_p$ estimates with coefficients VMO in the spatial variable $x$ (also denoted by $\text{VMO}_x$), under the Legendre--Hadamard ellipticity condition:
\begin{equation}
 \sum_{|\alpha|=|\beta|=m}A^{\alpha\beta}_{ij} \xi_i\xi_j\ \eta^\alpha\eta^\beta\ge \delta |\xi|^2|\eta|^{2m},\label{hadamard}
\end{equation}
where  $m$ is a positive integer,  $A^{\alpha\beta}$ are $n\times n$ matrices, $\xi \in \mathbb{R}^n$, $\eta\in \mathbb{R}^d$,  $\delta>0$ is a constant, $ \alpha=(\alpha_1,\cdots,\alpha_d)$ and $\eta^\alpha=\eta_1^{\alpha_1}\eta_2^{\alpha_2}\cdots\eta_d^{\alpha_d}$. The current paper can be viewed as a continuation of \cite{DK10}. To present our results, we let
\begin{equation*}
\l u=\sum_{|\alpha|\le m, |\beta|\le m}D^{\alpha}(A^{\alpha\beta}D^\beta u),
\end{equation*}
where $D^\alpha=D_1^{\alpha_1}\cdots D_d^{\alpha_d}$.
The functions used throughout this paper
\begin{equation*}
u=(u^1,\cdots, u^n)^{tr},\quad f_\alpha=(f_\alpha^1,\cdots, f_\alpha^n)^{tr}
\end{equation*}
are real vector-valued functions.
For the $L_p$ estimates, the parabolic systems we consider are of the form
\begin{equation}
u_t+(-1)^m \l u+\lambda u=\sum_{|\alpha|\le m}D^\alpha f_\alpha \label{the system}\quad \text{in}\quad (-\infty, T)\times \Omega
\end{equation}
with the conormal derivative boundary condition,
where $\lambda\ge 0, \Omega\subset\bR^{d}$ and $\Omega$ is either the half space or a $C^{m-1,1}$ domain.
 That is,
\begin{align*}
&\int_{(-\infty,T)\times \Omega}( -u\phi_t+\sum_{|\alpha|\le m, |\beta|\le m}(A^{\alpha\beta}D^\beta uD^\alpha \phi)+\lambda u\phi)\,dx\,dt\\
&=\sum_{|\alpha|\le m}(-1)^{|\alpha|}\int_{(-\infty,T)\times \Omega} f_\alpha D^\alpha \phi \,dx\,dt
\end{align*}
for any $\phi \in C_0^\infty((-\infty,T)\times \overline{\Omega})$.

We prove that if the coefficients $A^{\alpha\beta}(t,x)$ are VMO in the spatial variable $x$ and satisfy the strong ellipticity condition (for a precise definition see \eqref{strong}), then the solution $u$ of \eqref{the system} with the conormal derivative boundary condition satisfies the following estimate
\begin{equation*}
\sum_{k=0}^m\lambda^{1-\frac{k}{2m}}\|D^ku\|_{L_p((-\infty, T)\times \Omega)}\le N\sum_{|\alpha|\le m}\lambda^{\frac{|\alpha|}{2m}}\|f_\alpha\|_{L_p((-\infty, T)\times \Omega)},
\end{equation*}
where $N$ is a constant independent of $u$ and $\lambda$ is sufficiently large. We note that in the second-order case such result has been proved in \cite{DK11b} by using the result in the whole space and the technique of odd/even extensions. However, such technique does not work for higher-order equations or systems.

Our proof is in the spirit of an approach introduced by Krylov \cite{Kry07,Kry071} to deal with the second-order elliptic and parabolic equations with $\text{VMO}_x$ coefficients in the whole space, which is well explained in his book \cite{Kry08}. Generally speaking, this approach consists of two steps. First, we establish mean oscillation  estimates for systems with simple coefficients, i.e., the coefficients depending only on $t$. Second, we use a perturbation argument  which is well suited to the mean oscillation  estimates together with the Fefferman--Stein theorem on sharp functions and the Hardy--Littlewood theorem on maximal functions to obtain the desired $L_p$ estimates.

In our case, we need to distinguish $D_d^m u$ from the other $m$th derivatives of $u$. Here are the new ingredients of our proof. we begin by considering special systems with simple coefficients as follows.  Let $u$ be a solution of
\begin{equation}
u_t+(-1)^m \sum_{j=1}^{d-1}D_j^{2m} u+(-1)^mD^{\hat{\alpha}}(A^{\hat{\alpha}\hat{\alpha}}(t)D^{\hat{\alpha}}u)=0 \quad \text{in} \quad \{x_d>0\}\label{eq1.2}
\end{equation}
with the conormal derivative boundary condition on $\{x_d=0\}$, where $\hat{\alpha}=(0,\ldots, {0,m})$.
One can show that the conormal boundary condition corresponding to the special system is given by
\begin{equation*}
D_d^m u=\cdots=D_d^{2m-1}u=0\quad \text{on}\quad \{x_d=0\}.
\end{equation*}
This allows us to use some estimates for non-divergence type systems with the Dirichlet boundary condition obtained in \cite{DK10}. More precisely, we differentiate \eqref{eq1.2} $m$ times with respect to $x_d$, and observe that $D_d^m u$ satisfies the Dirichlet boundary condition on $\{x_d=0\}$. We then apply a result in \cite{DK10} to obtain a H\"older estimate and consequently a mean oscillation estimate of $D_d^m u$.  The latter together with the Fefferman--Stein theorem and the Hardy--Littlewood theorem yields the $L_p$ estimate of $D_d^m u$ in the half space for the special systems. For general systems in the half space with simple coefficients, we implement a scaling argument to control the $L_p$ norm of $D_d^m u$ by the $L_p$ norms of $f_\alpha$ and $D_{x^\prime}D^{m-1}u$, which in view of the Sobolev embedding theorem implies a H\"older estimate of $D^{m-1}u$. Now using the fact that we can always differentiate the system with respect to $x^\prime$, we are able to prove a local H\"older estimate of $D_{x^\prime}D^{m-1}u$ for homogeneous systems. A mean oscillation estimate of $D_{x^\prime}D^{m-1}u$ then follows from the H\"older estimate naturally for inhomogeneous systems with simple coefficients. For general coefficients, we use the argument of freezing the coefficients to obtain a mean oscillation estimate for $D_{x^\prime}D^{m-1}u$ and a scaling argument  to bound the $L_p$ norm of $D_d^m u$ by $f_\alpha$ and $D_{x^\prime}D^{m-1}u$ just as in the case of the simple coefficients. Combining the estimates of $D_{x'}D^{m-1}u$ and $D_d^m u$, we prove the $L_p$ estimates.

In \cite{FMP91, FMP93}, Chiarenza, Frasca, and Longo initiated the study of the $W_p^2$ estimates for second-order elliptic equations with VMO leading coefficients. Their proof is based on certain estimates of the Calder\'{o}n--Zygmund theorem and the Coifman--Rochberg--Weiss commutator theorem. Recently, the first author and Kim \cite{DK12} considered the conormal problem for higher-order elliptic systems with coefficients merely measurable in one direction and have small mean oscillation in orthogonal directions on each small ball. Regarding other developments in this direction, we refer the reader to  Bramanti and Cerutti \cite{BC93}, Bramanti, Cerutti, and  Manfredini \cite{BCM96},
Di Fazio \cite{F96}, Maugeri, Palagachev, and Softova \cite{MPS}, Palagachev and Softova \cite{PS}, Krylov \cite{Kry08}, and the references therein.

The second objective of this paper is to obtain the Schauder estimates for solutions to the following system:
\begin{equation}\label{the system1}
u_t+(-1)^m \l u=\sum_{|\alpha|\le m}D^\alpha f_\alpha \quad \text{in}\quad (0,T)\times \Omega
\end{equation}
with the conormal derivative boundary condition on $(0,T)\times \partial \Omega$ and the zero initial condition on $ \{0\}\times \Omega$, where $\partial\Omega\in C^{m,a}$, for some $a\in (0,1)$.
There is a vast  literature on the Schauder estimates for parabolic and elliptic equations; see, for instant, \cite{FA, L96, Kry96}. The classical approaches are based on analyzing the fundamental solutions of the equations with a perturbation argument.  On the other hand, when dealing with systems, it has become customary to use Campanato's technique which was first introduced in \cite{Cam66}, and is well explained in \cite{Gia93}.
However, most of these results are obtained under the assumption that the coefficients are sufficiently regular in both $t$ and $x$.

In this paper, for systems in the half space we estimate all the $m$th order spatial derivatives with the exception of $D_d^{m}u$, when the coefficients are only measurable in the $t$ variable. This type of coefficient has been studied by several authors mostly for second-order equations; see, for instance, \cite{Knerr,L92, Lorenzi, KrPr}. Lieberman \cite{L92} studied the interior and boundary Schauder estimates for second-order parabolic equations with the same class of coefficients. In the proof, he used Campanato type approach and the maximum principle, the latter of which no longer works for systems or higher-order equations.  Here we implement the $L_p$ estimates obtained in the first part, with a bootstrap argument to obtain a local H\"{o}lder regularity for systems with simple coefficients, which yields the following mean oscillation estimate:
\begin{align*}
&\int_{Q_{r}^+(X_0)}|D_{x^\prime}D^{m-1}u-(D_{x^\prime}D^{m-1}u)_{Q_r^+(X_0)}|^2\,dx\,dt\nonumber\\
&\le N(\frac{r}{R})^{2\gamma+2m+d}\int_{Q_{R}^+(X_0)}|D_{x^\prime}D^{m-1}u-(D_{x^\prime}D^{m-1}u)_{Q_R^+(X_0)}|^2\,dx\,dt,
\end{align*}
 where $\gamma\in (0,1)$, $0<r<R<\infty$,  $X_0\in Q_R^+$,
and $u$ is the solution of
\begin{equation*}
u_t+(-1)^m\l_0 u:=u_t+(-1)^m \sum_{|\alpha|=|\beta|=m}D^\alpha(A^{\alpha\beta}(t)D^\beta u)=0 \quad \text{in} \,\, Q_{2R}^+
\end{equation*}
with the conormal derivative boundary condition on $\{x_d=0\}\cap Q_{2R}$. We then prove that if the coefficients $A^{\alpha\beta}$ and $f_\alpha$ in \eqref{the system1} are H\"{o}lder continuous in the spatial variables $x$, then  $D_{x^\prime}D^{m-1}u$ is H\"{o}lder continuous in both $t$ and $x$.

In contrast to the Dirichlet boundary condition case in \cite{DZ12}, to estimate $D_d^m u$,  more regularity assumptions on the coefficients and data are necessary. In fact, this is not surprising by  considering the second-order equation
\begin{equation*}
u_t-D_{11}u-D_2(a(t)D_{2}u)=D_2 f(t)\quad \text{in}\quad (0,T)\times\bR_+^2
\end{equation*}
with the conormal derivative boundary condition. The corresponding boundary condition is given by
$$
D_2 u=-f(t)/a(t)\quad \text{on}\quad \{x_2=0\},
$$
which implies that $D_2 u$ is not necessarily continuous. In this paper, we also consider the case  {when} the coefficients are H\"older continuous in both $t$ and $x$. We show that under this stronger assumption all the $m$th order derivatives of $u$ are H\"older continuous in both $t$ and $x$. For the proof, it is sufficient to estimate $D_d^m u$. To this end, we consider a system with special coefficients as in the $L_p$ case, and then use a scaling argument to bound the H\"older semi-norm of $D_d^m u$ by $f_\alpha$ and $D_{x^\prime}D^{m-1}u$.  Combining the estimate of $D_{x^\prime}D^{m-1}u$, which we obtained in the previous case, we are able to estimate $D^m u$.

For linear systems, our Schauder estimate extends the results in Lieberman \cite{L87}, in which the author considered second-order quasilinear parabolic equations with the conormal boundary condition. Schauder estimates for higher-order non-divergence type parabolic systems in the whole space, with the coefficients measurable in $t$ and H\"older continuous in $x$, was considered recently in Boccia \cite{SB12}. With the same class of coefficients, in \cite{DZ12} the authors obtained  Schauder estimates for both divergence type and non-divergence type higher-order parabolic systems in the half space with the Dirichlet boundary condition.  For more results about the conormal derivative problems, we refer the reader to Lieberman \cite{L83,L921} and his new book \cite{L13}.

The paper is organized as follows. In the next section, we introduce some notation and state our main results.  The remaining part of the article can be divided into two parts. In the first part, we treat the $L_p$ estimates.  Section 3 provides some necessary preparations and Section 4 deals with the $L_p$ estimates of $D_d^m u$ for systems with special coefficients.  Section 5 and Section 6 are devoted to the proof of our main result of the $L_p$ estimates.  The second part is about the Schauder estimates.  In Section 7, we establish necessary lemmas and prove the Schauder estimates near a flat boundary with coefficients H\"older continuous only with respect to  $x$. Finally in Section 8, we prove the Schauder estimates when the coefficients are H\"older continuous in both $t$ and $x$.

\section{Main results}
We first introduce some notation used throughout the paper. A point in $\mathbb{R}^d$ is denoted by $x=(x_1,\cdots,x_d)$. We also denote $x=(x^\prime,x_d)$, where $x^\prime \in \mathbb{R}^{d-1}$. A point in
\begin{equation*}
\mathbb{R}^{d+1}=\mathbb{R}\times\mathbb{R}^d=\{(t,x):t\in \mathbb{R},x\in \mathbb{R}^d\}
\end{equation*}
is denoted by $X=(t,x)$.~For $T \in (-\infty,\infty]$, set
\begin{equation*}
\mathcal{O}_T=(-\infty,T)\times \mathbb{R}^d, \quad \mathcal{O}^+_T=(-\infty,T)\times\mathbb{R}^d_+,
\end{equation*}
where $\mathbb{R}^d_+:=\{x=(x_1,\cdots,x_d)\in \mathbb{R}^d: x_d>0\}$. In particular, when $T=\infty$, we use $\mathcal{O}_\infty^+=\bR^{d+1}_+:=\mathbb{R}\times \mathbb{R}^d_+$. Denote
\begin{align*}
&B_r(x_0)=\{y\in \mathbb{R}^d: |x_0-y|<r\},\quad Q_r(t_0,x_0)=(t_0-r^{2m},t_0)\times B_r(x_0),\\
& B_r^+(x_0)=B_r(x_0)\cap \{x_d>0\},\quad Q^+_r(t_0,x_0)=Q_r(t_0,x_0) \cap\mathcal{O}_\infty^+ .
\end{align*}
We use the abbreviation, for instance, $Q_r$ to denote the parabolic cylinder centered at $(0,0)$. The parabolic boundary of $Q_r(t_0,x_0)$ is defined to be
\begin{equation*}
\partial_p Q_r(t_0,x_0)=[t_0,t_0-r^{2m}) \times \partial B_r(x_0)\cup \{t=t_0-r^{2m}\} \times B_r(x_0).
\end{equation*}
The parabolic boundary of $Q^+_r(t_0,x_0)$ is
\begin{equation*}
\partial_p Q^+_r(t_0,x_0)=(\partial_p Q_r(t_0,x_0)\cap \overline{\mathcal{O}_\infty^+} )\cup(Q_r(t_0,x_0)\cap \{x_d =0\}).
\end{equation*}
We denote
\begin{equation*}
\langle f,g\rangle_\Omega=\int_\Omega f^{tr}g=\sum_{j=1}^n\int_\Omega f^jg^j.
\end{equation*}

For a function $f$ on $\mathcal{D} \subset \mathbb{R}^{d+1}$, we set
\begin{equation*}
[f]_{a,b,\mathcal{D}}\equiv \sup\left\{\frac{|f(t,x)-f(s,y)|}{|t-s|^{a}+|x-y|^b} :(t,x),(s,y)\in \mathcal{D}, (t,x)\neq (s,y) \right\},
\end{equation*}
where $a,b\in (0,1]$. The H\"{o}lder semi-norm with respect to $t$ is denoted by
\begin{equation*}
\langle f\rangle_{a,\mathcal{D}}\equiv \sup \left\{\frac{|f(t,x)-f(s,x)|}{|t-s|^{a}}:(t,x),(s,x)\in \mathcal{D}, t\neq s\right\},
\end{equation*}
where $a\in (0,1]$. We also define the H\"older semi-norm with respect to $x$
\begin{equation*}
[f]_{a,\mathcal{D}}^\ast \equiv \sup\left\{\frac{|f(t,x)-f(t,y)|}{|x-y|^a} :(t,x),(t,y)\in \mathcal{D}, x\neq y \right\},
\end{equation*}
and denote
\begin{equation*}
\|f\|^\ast_{a,\mathcal{D}}=\|f\|_{L_\infty(\mathcal{D})}+[f]^\ast_{a,\mathcal{D}},
\end{equation*}
where $a\in(0,1]$. The space corresponding to $\|\cdot\|_a^\ast$ is denoted by  $C^{a\ast }(\mathcal{D})$. When $0<a \le 1$, let
\begin{equation*}
\|f\|_{\frac{a}{2m},a,\mathcal{D}}=\|f\|_{L_\infty(\mathcal{D})}+[f]_{\frac{a}{2m},a,\mathcal{D}}.
\end{equation*}
The space corresponding to $\|\cdot\|_{\frac{a}{2m}, a,\mathcal{D}}$ is denoted by $C^{\frac{a}{2m},a}(\mathcal{D})$. For $1<a<2m$ not an integer, we define
\begin{equation*}
\|f\|_{\frac{a}{2m},a,\mathcal{D}}=\|f\|_{L_\infty(\mathcal{D})}+\sum_{|\alpha|< a}[D^\alpha f]_{\frac{a-|\alpha|}{2m},\{a\},\mathcal{D}},
\end{equation*}
where $\{a\}=a-[a]$.

We denote the average of $f$ in $\cD$ to be
\begin{equation*}
(f)_\mathcal{D}=\frac{1}{|\mathcal{D}|}\int_{\mathcal{D}} f(t,x)\,dx\,dt=\dashint_\mathcal{D}f(t,x)\,dx\,dt.
\end{equation*}
Sometimes we take average only with respect to $x$. For instance,
\begin{equation*}
  (f)_{B_R(x_0)}(t)=\dashint_{B_R(x_0)}f(t,x)\,dx.
 \end{equation*}

Throughout this paper, we assume that all the coefficients are measurable and bounded:
\begin{equation*}
|A^{\alpha\beta}|\le K.
\end{equation*}
In addition, we impose the strong ellipticity condition with a constant $\delta>0$ on the leading coefficients, i.e.,
\begin{equation}\label{strong}
 \sum_{|\alpha|=|\beta|=m}A^{\alpha\beta} \xi_\alpha\xi_\beta\ge \delta \sum_{|\alpha|=m}|\xi_\alpha|^2,
\end{equation}
where $\xi_\alpha \in \mathbb{R}^n$.  We note that the strong ellipticity condition, which is  mainly used in proving Theorem \ref{L_2}, is stronger than the Legender--Hadamard ellipticity condition \eqref{hadamard}.
Here we call $A^{\alpha\beta}$ the leading coefficients if $|\alpha|=|\beta|=m $. All the other coefficients are called lower-order coefficients.
In order to state and prove our results in Sobolev spaces, in addition to the well-known $L_p$ and $W_p^k$ spaces, we introduce the following function spaces. If $\Omega =\mathbb{R}^d$, let
\begin{equation*}
\h_p^m((S,T)\times \mathbb{R}^d)=(1-\Delta)^{\frac{m}{2}}W_p^{1,2m}((S,T)\times \mathbb{R}^d)
\end{equation*}
equipped with the norm
\begin{equation*}
\|u\|_{\h_p^m((S,T)\times\mathbb{R}^d)}=\|(1-\Delta)^{-\frac{m}{2}}u\|_{W_p^{1,2m}((S,T)\times\mathbb{R}^d)}.
\end{equation*}
Here
\begin{equation*}
W_p^{1,2m}((S,T)\times \mathbb{R}^d)=\{u: u_t,D^\alpha u \in L_p((S,T)\times \mathbb{R}^d), 0\le |\alpha|\le 2m\}
\end{equation*}
equipped with its natural norm. Notice that if we set
\begin{align*}
&\mathbb{H}_p^{-m}((S,T)\times \mathbb{R}^d)=(1-\Delta)^{\frac{m}{2}}L_p((S,T)\times \mathbb{R}^d),\\
&\|f\|_{\mathbb{H}_p^{-m}((S,T)\times \mathbb{R}^d)}=\|(1-\Delta)^{-\frac{m}{2}}f\|_{L_p((S,T)\times \mathbb{R}^d)},
\end{align*}
then
\begin{equation*}
\|u\|_{\h_p^m((S,T)\times\mathbb{R}^d)} \cong \|u_t\|_{{\bH}_p^{-m}((S,T)\times \mathbb{R}^d)}+\sum_{|\alpha|\le m}\|D^\alpha u\|_{L_p((S,T)\times \mathbb{R}^d)}.
\end{equation*}
For a general domain $\Omega\subset \bR^d$, we set
\begin{equation*}
\mathbb{H}_p^{-m}((S,T)\times \Omega)=\bigg\{ f:f=\sum_{|\alpha|\le m}D^\alpha f_\alpha, \,f_\alpha\in L_p((S,T)\times \Omega)\bigg\},
\end{equation*}
\begin{equation*}
\|f\|_{\mathbb{H}_p^{-m}((S,T)\times \Omega)}=\text{inf}\bigg\{   \sum_{|\alpha|\le m}\|f_\alpha\|_{L_p((S,T)\times \Omega)}: f=\sum_{|\alpha|\le m}D^\alpha f_\alpha\bigg\},
\end{equation*}
and
\begin{align*}
&\h_p^m((S,T)\times \Omega)\\
&=\{u: u_t \in \mathbb{H}_p^{-m}((S,T)\times \Omega), D^\alpha u \in L_p((S,T)\times \Omega) ,0\le |\alpha|\le m\},\\
&\|u\|_{\h_p^m((S,T)\times \Omega)}=\|u_t\|_{\mathbb{H}_p^{-m}((S,T)\times \Omega)}+\sum_{|\alpha|\le m}\|D^\alpha u\|_{L_p((S,T)\times \Omega)}.
\end{align*}
Let $\mathcal{Q}=\{Q_r(t,x)\cap \o_\infty^+:(t,x)\in \o_\infty^+, r\in(0,\infty) \}$. For a function $g$ defined on $\o_\infty^+$, we denote its (parabolic) maximal and sharp functions, respectively, by
\begin{align*}
&Mg(t,x)=\sup_{Q\in \mathcal{Q}:(t,x)\in Q}\dashint_Q|g(s,y)|\,dy\,ds,\\
&g^\#(t,x) =\sup_{Q\in \mathcal{Q}:(t,x)\in Q}\dashint_Q|g(s,y)-(g)_Q| \,dy\,ds.
\end{align*}
Then
\begin{equation*}
\|g\|_{L_p}\le N\|g^\#\|_{L_p},\quad \|Mg\|_{L_p}\le N\|g\|_{L_p},
\end{equation*}
if $g\in L_p$, where $1<p<\infty$ and $N=N(d,p)$. As is well known, the first inequality above is due to the Fefferman--Stein theorem on sharp functions and the second one is the Hardy--Littlewood maximal function theorem.

Now we state our regularity assumption  on the leading coefficients for the $L_p$ estimates. Let
\begin{equation*}
\text{osc}_x(A^{\alpha\beta}, Q^+_r(t,x))=\dashint_{t-r^{2m}}^{~~~ t} \dashint_{B^+_r(x)}|A^{\alpha\beta}(s,y)-\dashint_{B_r^+(x)}A^{\alpha\beta}(s,z)\,dz|\,dy\,ds,
\end{equation*}
which is the mean oscillation in the spatial variables.
Then we set
\begin{equation*}
A_R^\#=\sup_{(t,x)\in \o_\infty^+}\sup_{r\le R}\sup_{|\alpha|=|\beta|=m}\text{osc}_x(A^{\alpha\beta},Q_r^+(t,x)).
\end{equation*}
We impose on the leading coefficients a small mean oscillation condition with a parameter $\rho>0$, which is specified later.
\begin{assumption}\label{main assumption}
($\rho$). There is a constant $R_0\in(0,1]$ such that $A_{R_0}^\#\le \rho$.
\end{assumption}

We are now ready to present our main results. The first one is the $L_p$ estimates.
\begin{theorem}\label{theorem tan}
Let $p\in(1,\infty)$, $\partial\Omega\in C^{m-1,1}$, $T\in (-\infty,\infty]$, and $f_\alpha\in L_p((-\infty, T)\times \Omega)$ for $|\alpha|\le m$. {The operator $L$ satisfies \eqref{strong}}. Then there exist constants $\rho=\rho(d,n,m, \delta,p, K), $ and $\lambda_0=\lambda_0(d,n,m,\delta,p, K, R_0)$ such that, under Assumption \ref{main assumption} $(\rho)$,  for any $u\in \h_p^m((-\infty, T)\times \Omega)$ satisfying
\begin{equation}
u_t+(-1)^m\l u+\lambda u=\sum_{|\alpha|\le m}D^\alpha f_\alpha \quad \text{in}\quad (-\infty, T)\times \Omega \label{the equation}
\end{equation}
with the conormal derivative boundary condition on ${(-\infty,T)\times}\partial \Omega$,
we have
\begin{equation}
\sum_{k=0}^m\lambda^{1-\frac{k}{2m}}\|D^ku\|_{L_p((-\infty, T)\times \Omega)}\le N\sum_{|\alpha|\le m}\lambda^{\frac{|\alpha|}{2m}}\|f_\alpha\|_{L_p((-\infty, T)\times \Omega)},\label{the estimate}
\end{equation}
provided that $\lambda\ge \lambda_0$, where $N$ depends only on $d,n,m,\delta,p,K$, and $\Omega$.

Moreover, if $\lambda>\lambda_0$, there exists a unique solution $u\in \h_p^m((-\infty, T)\times \Omega)$ of \eqref{the equation}, which satisfies \eqref{the estimate}.
\end{theorem}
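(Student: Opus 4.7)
The plan is to follow the Krylov-style mean oscillation approach outlined in the introduction. A first reduction would use a partition of unity and $C^{m-1,1}$ boundary straightening diffeomorphisms to reduce the problem on $\Omega$ to the half-space $\bR^d_+$; the transformation produces only Lipschitz-small perturbations of the leading coefficients plus lower-order terms, both absorbable into $\lambda u$ once $\lambda_0$ is large. Thereafter the work is on $\bR^d_+$, and the first substantive case is that of coefficients depending only on $t$.

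For simple coefficients I would exploit the asymmetry intrinsic to the conormal problem by estimating $D_{x'}D^{m-1}u$ and $D_d^m u$ separately. The key observation is that for the model operator
\begin{equation*}
\sum_{j=1}^{d-1} D_j^{2m}u + D^{\hat\alpha}\bigl(A^{\hat\alpha\hat\alpha}(t)D^{\hat\alpha}u\bigr), \qquad \hat\alpha=(0,\ldots,0,m),
\end{equation*}
the conormal condition collapses to $D_d^k u = 0$ on $\{x_d=0\}$ for $k=m,\ldots,2m-1$, so that $v:=D_d^m u$ solves a homogeneous non-divergence system with Dirichlet data; the half-space results of \cite{DK10} then yield H\"older and hence mean oscillation estimates for $D_d^m u$. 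For a general simple-coefficient operator, a scaling argument bounds $\|D_d^m u\|_{L_p}$ by $\|D_{x'}D^{m-1}u\|_{L_p}$ plus the data, and a Sobolev embedding promotes this into a H\"older estimate for $D^{m-1}u$. Because tangential differentiation preserves the conormal condition, the same circle of ideas applied to the tangentially differentiated system produces a local H\"older bound for $D_{x'}D^{m-1}u$ in the homogeneous case, which passes to a mean oscillation bound in the inhomogeneous case.

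The next step is the transition to general $\text{VMO}_x$ coefficients by freezing: on each parabolic half-cylinder $Q_r^+$, replace $A^{\alpha\beta}(t,x)$ by its spatial average $\bar A^{\alpha\beta}(t)$, apply the simple-coefficient mean oscillation estimate to the modified equation, and absorb the perturbation using Assumption \ref{main assumption}$(\rho)$ together with H\"older's inequality, which produces an error of order $\rho^{1/p'}$ times a maximal function of $|D^m u|$. The Fefferman--Stein sharp function theorem and the Hardy--Littlewood maximal function theorem then convert the mean oscillation estimate into a global $L_p$ bound for $D_{x'}D^{m-1}u$; the scaling argument from the simple-coefficient step produces the companion bound for $D_d^m u$. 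Choosing $\rho$ small and $\lambda_0$ large absorbs perturbation and lower-order terms, proving \eqref{the estimate}. Uniqueness is immediate from \eqref{the estimate}, and existence follows by the standard method of continuity joining $\l$ to the trivially solvable $\sum_{j=1}^d D_j^{2m}$.

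The central obstacle---and what distinguishes this problem from the Dirichlet analog of \cite{DZ12}---is that $D_d^m u$ has no intrinsic boundary regularity under the conormal condition, so it cannot be controlled via a mean oscillation estimate of its own. The workaround is structural: $D_{x'}D^{m-1}u$ is estimated through mean oscillations (because tangential differentiation respects the conormal data), while $D_d^m u$ is recovered only through scaling, using crucially the hidden Dirichlet boundary condition satisfied by $D_d^m u$ in the model operator with pure derivatives.
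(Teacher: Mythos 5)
Your proposal follows essentially the same route as the paper's proof: the split between $D_{x'}D^{m-1}u$ (estimated via mean oscillation, using that tangential differentiation respects the conormal condition) and $D_d^m u$ (recovered by scaling, exploiting the hidden Dirichlet condition $D_d^m u=\cdots=D_d^{2m-1}u=0$ for the model operator), the freezing argument with the VMO hypothesis, and the Fefferman--Stein/Hardy--Littlewood conversion to $L_p$ all match the paper, as does the existence argument via method of continuity.

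One genuine omission is the range $p\in(1,2)$: the sharp-function approach you describe uses $L_2$-based mean oscillation quantities, so the Hardy--Littlewood step requires $p/2>1$, i.e.\ $p>2$. The paper handles $p\in(1,2)$ by duality (which is available because the system is in divergence form) and treats $p=2$ as classical. Also a small inaccuracy: the freezing error appears with exponent $\rho^{1/(2\nu)}$ where $1/\xi+1/\nu=1$ and $2\xi<p$, not $\rho^{1/p'}$, though this does not affect the conclusion.
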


The following result is regarding the Schauder estimates near a flat boundary with coefficients only measurable in the $t$ variable.
\begin{theorem}\label{tan thm}
Assume that $a\in(0,1)$, $u\in C_{loc}^{\infty}(\overline{{\o_\infty^+}})$ satisfying
\begin{equation*}
u_t+(-1)^m\l u=\sum_{|\alpha|\le m}D^\alpha f_\alpha\quad \text{in}\quad Q_{4R}^+
\end{equation*}
with the conormal derivative boundary condition on $\{x_d=0\}\cap Q_{4R}$, $f_\alpha \in C^{a\ast}$ if $|\alpha| =m$, $f_\alpha \in L_\infty$ if $|\alpha|< m$, and $A^{\alpha\beta}\in C^{a\ast}$. {The operator $L$ satisfies \eqref{strong}}. Then for any $R\le 1$, there exists a constant $N=N(d,n,m,\delta,K,{a,}\|A^{\alpha\beta}\|_a^\ast,R)$ such that
\begin{align*}
&[D_{x^\prime}D^{m-1} u]_{\frac{a}{2m},a,Q_R^+}\\
&\le N(\sum_{|\alpha|\le m}\|D^\alpha u\|_{L_\infty(Q_{4R}^+)}+\sum_{|\alpha|=m}[f_\alpha]_{a,Q_{4R}^+}^\ast+\sum_{|\alpha|<m}\|f_\alpha\|_{L_\infty(Q_{4R}^+)}).
\end{align*}
\end{theorem}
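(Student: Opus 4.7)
The plan is to prove the H\"older estimate via a Campanato-type mean-oscillation decay for $D_{x^\prime}D^{m-1}u$ on half-cylinders centered at boundary points, then apply the Campanato characterization of the parabolic H\"older space $C^{\frac{a}{2m},a}$. The argument proceeds in three tiers, following the roadmap of the introduction: (i) a local H\"older estimate for $D_{x^\prime}D^{m-1}v$ when $v$ solves the homogeneous system with ``simple'' coefficients $A^{\alpha\beta}=A^{\alpha\beta}(t)$; (ii) an excess-decay estimate for inhomogeneous systems with simple coefficients; and (iii) a coefficient-freezing argument to accommodate the $x$-H\"older dependence.

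For step (i), let $v\in C^\infty_{loc}(\overline{\o_\infty^+})$ satisfy
\[
v_t+(-1)^m\sum_{|\alpha|=|\beta|=m}D^\alpha(A^{\alpha\beta}(t)D^\beta v)=0\quad\text{in }Q_{2R}^+
\]
with the conormal condition on $\{x_d=0\}\cap Q_{2R}$. Since the coefficients are independent of $x$, each tangential derivative $D_{x^\prime}^\ell v$ solves the same system with the same boundary condition. Applying Theorem \ref{theorem tan} with $p=2$, together with scaling and smooth cutoffs, yields uniform $L_2$ bounds on $D_{x^\prime}^\ell D^m v$ on nested half-cylinders. Iterating the Sobolev embedding in the $(d-1)$ tangential directions turns these into $L_\infty$ bounds on $D_{x^\prime}^k D^{m-1}v$ for arbitrarily large $k$, and hence into a H\"older estimate of $D_{x^\prime}D^{m-1}v$ in $x$. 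Time regularity is extracted from the equation itself: $v_t$ equals a combination of spatial derivatives of $v$ up to order $2m$, and parabolic scaling converts the spatial H\"older exponent $2m\gamma$ into a temporal exponent $\gamma$. This delivers, for some $\gamma\in(a,1)$, the Campanato decay
\begin{align*}
&\int_{Q_r^+(X_0)}|D_{x^\prime}D^{m-1}v-(D_{x^\prime}D^{m-1}v)_{Q_r^+(X_0)}|^2\,dx\,dt\\
&\quad\le N(r/R)^{2\gamma+2m+d}\int_{Q_R^+(X_0)}|D_{x^\prime}D^{m-1}v-(D_{x^\prime}D^{m-1}v)_{Q_R^+(X_0)}|^2\,dx\,dt.
\end{align*}

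For step (ii), given an inhomogeneous solution $u$ with simple coefficients, I split $u=v+w$ on $Q_R^+(X_0)$, where $v$ solves the homogeneous problem with conormal boundary data and $w=u-v$ absorbs the forcing. The decay of step (i) applies to $v$, while Theorem \ref{theorem tan} with $p=2$ bounds the $L_2$ mean oscillation of $D_{x^\prime}D^{m-1}w$ by $R^{a+m+d/2}[f_\alpha]_{a,Q_R^+}^\ast$ (for $|\alpha|=m$, after subtracting the $x$-average of $f_\alpha$) plus lower-order $L_\infty$ contributions. For step (iii), at each boundary point $X_0=(t_0,x_0)$ I freeze the leading coefficients, writing
\[
A^{\alpha\beta}(t,x)D^\beta u=A^{\alpha\beta}(t,x_0)D^\beta u+(A^{\alpha\beta}(t,x)-A^{\alpha\beta}(t,x_0))D^\beta u,
\]
and absorbing the second term as additional forcing whose $L_\infty$ norm on $Q_R^+(X_0)$ is at most $[A^{\alpha\beta}]_a^\ast R^a\|D^\beta u\|_{L_\infty}$. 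Combining this with the decay estimate from step (ii) produces, for the $L_2$ mean-oscillation quantity $\phi$ of $D_{x^\prime}D^{m-1}u$, an inequality of the form
\[
\phi(r)\le N(r/R)^{2\gamma+2m+d}\phi(R)+N\cF\,r^{2a+2m+d},
\]
where $\cF$ collects the quantities on the right of the target estimate. Since $\gamma>a$, Giaquinta's iteration lemma yields $\phi(r)\le N\cF\,r^{2a+2m+d}$ uniformly in $r$, and the Campanato characterization of $C^{\frac{a}{2m},a}$ then gives the desired H\"older bound. Interior estimates away from $\{x_d=0\}$ are analogous and simpler.

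The main obstacle is step (i): because the coefficients are only measurable in $t$, classical Schauder theory does not apply, and the H\"older regularity of $D_{x^\prime}D^{m-1}v$ must be extracted purely from the $L_p$ theory of Theorem \ref{theorem tan} via tangential bootstrapping. A subtle technical point is that the bootstrap gives direct control only on derivatives involving at least one tangential direction; the normal derivative $D_d^m v$ is then handled by the $L_p$-type scaling argument described in the introduction, which dominates $\|D_d^m\,\cdot\,\|_{L_p}$ by $\|D_{x^\prime}D^{m-1}\,\cdot\,\|_{L_p}$ together with the $f_\alpha$'s.
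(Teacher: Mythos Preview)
Your three-tier outline (homogeneous simple coefficients $\Rightarrow$ inhomogeneous simple coefficients via an additive split $\Rightarrow$ freeze in $x$, then Giaquinta iteration and Campanato) is exactly the architecture the paper uses in Section~7.1. Two points deserve correction.

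\textbf{Step (i) has a real gap.} Your plan is to pile up tangential derivatives in $L_2$ and then ``iterate the Sobolev embedding in the $(d-1)$ tangential directions'' to reach $L_\infty$. This does not work: from $D_{x'}^\ell D^m v\in L_2$ you control at most one extra $D_d$ on $D^{m-1}v$, and you have no direct control on $\partial_t(D^{m-1}v)$ (the equation would require $D_d^{2m}v$, which you do not have). Sobolev in $x'$ alone cannot upgrade $L_2$ to $L_\infty$ in a parabolic region of full dimension $2m+d$. The paper's mechanism is different: it bootstraps the \emph{Lebesgue exponent}, not the number of derivatives. Starting from the local $L_2$ Caccioppoli estimate, one uses the parabolic Sobolev embedding $\h_q^m\hookrightarrow L_p$ on $D^{m-1}$ (Lemma~\ref{sobolev}), applies it to $D_{x'}u$ (which satisfies the same system), and then feeds this back through the localized inequality
\[
\|D^m u\|_{L_p(Q_r^+)}\le N\big(\|D_{x'}D^{m-1}u\|_{L_p(Q_R^+)}+\|u\|_{L_p(Q_R^+)}\big)
\]
(a localization of Lemma~\ref{nor and tan}) to push $\|u\|_{\h_p^m}$ up the scale $2=p_0<p_1<\cdots$ until $p$ is large enough for the embedding $\h_p^m\hookrightarrow C^{\frac{\gamma}{2m},\gamma}$ on $D^{m-1}$. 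This is Lemma~\ref{regularity lemma}, and it simultaneously delivers H\"older regularity in $t$ without appealing to ``$v_t=$ spatial derivatives of order $2m$'', which at this stage you cannot use.

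\textbf{Step (ii) should be reversed.} You propose to solve the \emph{homogeneous} problem for $v$ on $Q_R^+(X_0)$ and set $w=u-v$. Prescribing data for a higher-order conormal problem on a half-cylinder is awkward. The paper does the opposite (Lemma~\ref{tan holder}): solve for $w$ \emph{globally} on $(-(4R)^{2m},0)\times\bR^d_+$ with cut-off right-hand side $\zeta\tilde f_\alpha$ (subtracting $f_\alpha(t,0)$ for $|\alpha|=m$) via Theorem~\ref{L_2}, so that $v:=u-w$ is automatically homogeneous on $Q_{2R}^+$ with the conormal condition. This yields directly $\|D^m w\|_{L_2}\lesssim R^{a+m+d/2}[f_\alpha]^{\ast}_{a}+\cdots$ and the decay of $v$ from step~(i). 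The freezing step, Giaquinta's lemma, Campanato, and finally moving lower-order coefficients to the right with interpolation are then carried out as you describe.
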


Our last result is regarding the Schauder estimates in cylindrical domains with more regularity assumptions on the coefficients.
\begin{theorem}\label{thm schauder}
Assume that $a\in (0,1)$, $\partial\Omega \in C^{m,a}, T\in (0,\infty)$, $f_\alpha\in C^{\frac{a}{2m},a}$ if $|\alpha|=m$ and $f_\alpha \in L_\infty$ if $|\alpha|<m$. The operator $\l $ satisfies \eqref{strong}, and $A^{\alpha \beta} \in C^{\frac{a}{2m},a}$. Then the equation
\begin{align*}
&u_t+(-1)^m \l u =\sum_{|\alpha|\le m}D^\alpha f_\alpha \quad \text{in} \quad (0,T)\times \Omega
\end{align*}
with the conormal derivative boundary condition on $(0,T)\times\partial\Omega$ and the zero initial condition on $\{0\}\times \Omega$ has a unique solution  $u\in C^{\frac{a+m}{2m},a+m}((0,T)\times\Omega)$. Moreover, there exists a constant $N=N(d,n,m,\delta,K, \|A^{\alpha\beta}\|_{\frac{a}{2m},a},\Omega,T,a)$ such that,
\begin{equation}\label{finaldiv}
\|u\|_{\frac{a+m}{2m},a+m,(0,T)\times \Omega} \le N\big(\|u\|_{L_2((0,T)\times \Omega)}+G\big),
\end{equation}
where
$$
G=\sum_{|\alpha|=m}{[f_\alpha]}_{\frac{a}{2m},a,(0,T)\times \Omega}+\sum_{|\alpha|\le m}\|f_\alpha\|_{L_\infty((0,T)\times \Omega)}.
$$
\end{theorem}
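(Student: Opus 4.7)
My plan is to follow the two-step scheme suggested in the introduction: first reduce to the half-space via a flattening argument, and then combine Theorem \ref{tan thm} (which already delivers the full $C^{a/2m,a}$ estimate for every $m$-th derivative other than $D_d^m u$) with an independent treatment of $D_d^m u$. By a finite partition of unity and local $C^{m,a}$ diffeomorphisms that flatten $\partial\Omega$, the structure of the system, the conormal boundary condition, the ellipticity constant, and the $C^{a/2m,a}$ class of the coefficients are all preserved (up to modifications that do not affect the statement). It then suffices to prove the interior and boundary a priori estimates in $\o_\infty^+$. The interior case is essentially a simplification of the boundary case, so I focus on the latter, where the only missing ingredient is a $C^{a/2m,a}$ bound for $D_d^m u$.

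To estimate $D_d^m u$, mimicking the strategy of the $L_p$ theory (Section 4), I would introduce the frozen-coefficient model problem
\[
v_t+(-1)^m\sum_{j=1}^{d-1}D_j^{2m}v+(-1)^m D^{\hat{\alpha}}\bigl(A^{\hat{\alpha}\hat{\alpha}}(t_0,x_0)D^{\hat{\alpha}}v\bigr)=0\quad\text{in }Q_R^+(X_0),
\]
subject to the conormal condition, where $\hat\alpha=(0,\ldots,0,m)$. As recorded in the introduction, for this special system the conormal condition collapses to the Dirichlet-type condition $D_d^m v=\cdots=D_d^{2m-1}v=0$ on $\{x_d=0\}$, so $w:=D_d^m v$ satisfies a constant-coefficient homogeneous $2m$-th order system in $Q_R^+$ with the Dirichlet boundary condition. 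Applying the higher-order Dirichlet Schauder estimates of \cite{DZ12} to $w$ (together with its tangential derivatives) and rescaling, I obtain the Campanato-type excess-decay bound
\[
\dashint_{Q_r^+(X_0)}|D_d^m v-(D_d^m v)_{Q_r^+(X_0)}|^2\,dX\le N\Bigl(\frac{r}{R}\Bigr)^{2a+d+2m}\dashint_{Q_R^+(X_0)}|D_d^m v|^2\,dX
\]
for all $0<r\le R$.

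The final step is a freezing-coefficients perturbation. Writing the full equation for $u$ as the model equation for $v$ with coefficients frozen at $(t_0,x_0)$ plus a remainder driven by $(-1)^{m+1}D^\alpha\bigl((A^{\alpha\beta}(t,x)-A^{\alpha\beta}(t_0,x_0))D^\beta u\bigr)+\sum D^\alpha f_\alpha$, and exploiting the H\"older continuity of both $A^{\alpha\beta}$ and $f_\alpha$ in $(t,x)$, I bound the remainder's oscillation on $Q_r^+$ by $r^{a}(\|D^m u\|_{L_\infty}+G)$. Plugging into the excess-decay inequality and iterating via a standard Campanato lemma give
\[
[D_d^m u]_{a/2m,a,Q_{R/2}^+}\le N\bigl(\|D^m u\|_{L_\infty(Q_R^+)}+G\bigr).
\]
Combining with Theorem \ref{tan thm} for $D_{x'}D^{m-1}u$ and using the equation to convert the spatial H\"older bound on $D^m u$ into one for $u_t$, this yields the local $C^{(a+m)/2m,a+m}$ estimate near every boundary point. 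Patching, absorbing $\|D^m u\|_{L_\infty}$ by interpolation, and invoking the $L_2$ a priori bound (Theorem \ref{theorem tan} with $p=2$) yield \eqref{finaldiv}. Existence and uniqueness in $C^{(a+m)/2m,a+m}$ then follow from Theorem \ref{theorem tan} and the continuity method.

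I expect the main obstacle to lie in Step 2, namely verifying that the conormal condition forces $D_d^k v=0$ on $\{x_d=0\}$ for all $m\le k\le 2m-1$ in the model problem; this requires a careful term-by-term analysis of the boundary integrals in the weak formulation combined with induction on $k$. A more subtle secondary difficulty appears in Step 3, where the time-freezing error $(A^{\alpha\beta}(t,\cdot)-A^{\alpha\beta}(t_0,\cdot))D^\beta u$ involves the top-order derivative $D^\beta u$ with $|\beta|=m$, which is a priori only bounded. This is precisely why Theorem \ref{thm schauder} requires H\"older regularity in $t$ as well as in $x$, as foreshadowed by the one-dimensional counterexample discussed in the introduction.
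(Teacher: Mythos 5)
Your overall strategy matches the paper's: flatten to the half space, get the tangential derivatives $D_{x'}D^{m-1}u$ from Theorem~\ref{tan thm}, handle $D_d^m u$ by comparing to a special model operator whose conormal condition collapses to Dirichlet for $D_d^m v$, and close via Campanato iteration and $L_2$ control. But there is a genuine mismatch in your freezing step that breaks the argument as written. Your model operator is $\sum_{j<d}D_j^{2m} + D^{\hat\alpha}\bigl(A^{\hat\alpha\hat\alpha}(t_0,x_0)D^{\hat\alpha}\bigr)$, yet the remainder you record is only $(A^{\alpha\beta}(t,x)-A^{\alpha\beta}(t_0,x_0))D^\beta u$ plus $f_\alpha$; that remainder corresponds to freezing the \emph{full} operator $\l_{X_0}$, not to replacing it with the special one. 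To use the special model (which you need in order to read off $D_d^m v=\cdots=D_d^{2m-1}v=0$ and invoke the Dirichlet estimates of \cite{DZ12}), you must move \emph{all} of the leading cross-terms $D^\alpha(A^{\alpha\beta}D^\beta u)$ with $(\alpha,\beta)\neq(\hat\alpha,\hat\alpha)$ onto the right-hand side. Among these are terms such as $D^\alpha(A^{\alpha\hat\alpha}D_d^m u)$ whose H\"older oscillation involves $[D_d^m u]_{a/2m,a}$ itself, not merely $\|D^m u\|_{L_\infty}$. Without a smallness mechanism they cannot be absorbed, and your scheme becomes circular. The paper's fix is the anisotropic rescaling $v(t,x',x_d)=u(\mu^{-2m}t,x',\mu^{-1}x_d)$, which makes every such cross-term pick up a factor $\mu^{\alpha_d+\beta_d-2m}\le \mu^{-1}$; Lemmas~\ref{nor holder1}--\ref{nor v} carefully track the $\mu$-dependence so that only $\mu^{-1}[D_d^m v]_{a/2m,a}$ appears on the right-hand side, and Proposition~\ref{final pro} then fixes $\mu$ large to absorb it. Your proposal contains no such scaling and would stall at the absorption step.

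A second, smaller gap: after you control $[D^m u]_{a/2m,a}$ you cannot directly ``convert'' this into $\langle u\rangle_{\frac{m+a}{2m}}$ by reading off $u_t$ from the equation, because the equation only identifies $u_t$ distributionally as a sum of $D^\alpha g_\alpha$ with $|\alpha|\le m$; that does not give pointwise H\"older continuity of $u$ in $t$. The paper treats this separately in Proposition~\ref{estimate in t} via a Taylor-expansion/mollification argument (Lemma~\ref{tech function}) that extracts the temporal increment of $u$ directly by testing against a compactly supported bump with vanishing moments. This step is needed to upgrade the spatial $C^{a}$ bounds on $D^m u$ and the $D_{x'}D^{m-1}u$ estimates into the full $C^{\frac{m+a}{2m},m+a}$ norm appearing in \eqref{finaldiv}, and your sketch skips it.
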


\section{Some auxiliary estimate{s}}
In this section we consider operators without lower-order terms. Denote
\begin{equation*}
\l_0 u=\sum_{|\alpha|=|\beta|=m}D^\alpha(A^{\alpha\beta}D^\beta u),
\end{equation*}
where $A^{\alpha\beta}=A^{\alpha\beta}(t)$.
Let $C_0^\infty(\overline{\o_T^+})$ be the collection of infinitely differentiable functions defined on $\overline{\o_T^+}$ vanishing for large $|(t,x)|$.

The following $L_2$ estimate for parabolic operators in the divergence form with measurable coefficients is classical.
\begin{theorem}\label{L_2}
There exists a constant $N=N(d,m,n,\delta)$ such that for any $\lambda \ge 0$,
\begin{equation}
\sum_{|\alpha|\le m} \lambda^{1-\frac{|\alpha|}{2m}}\|D^\alpha u\|_{L_2(\o_T^+)}\le N\sum_{|\alpha|\le m}\lambda^{\frac{|\alpha|}{2m}}\|f_\alpha\|_{L_2(\o_T^+)},\label{L_2 simple}
\end{equation}
provided that $u\in \h_2^m(\o_T^+)$ satisfies
\begin{equation}
u_t+(-1)^m \l_0 u+\lambda u=\sum_{|\alpha|\le m}D^\alpha f_\alpha \label{L2 half}
\end{equation}
in $\o_T^+$ with the conormal derivative boundary condition on $\{x_d=0\}$, and $f_\alpha \in L_2(\o_T^+)$, $|\alpha|\le m$. Furthermore, for any $\lambda>0$ and $f_\alpha \in L_2(\o_T^+)$, there exists a unique solution $u \in \h_2^m(\o_T^+)$ to the system \eqref{L2 half} with the conormal derivative boundary condition on $\{x_d=0\}$.
\end{theorem}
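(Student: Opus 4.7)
The plan is to prove the a priori estimate \eqref{L_2 simple} by a classical energy argument, and then obtain existence and uniqueness via the method of continuity.

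For the a priori estimate, I would test the weak formulation with $\phi = u$. Since $u \in \h_2^m(\o_T^+)$ has $u_t$ only in $\bH_2^{-m}$, the pairing $\langle u_t, u\rangle$ is not pointwise defined; I handle this by first regularizing $u$ in $t$ by Steklov averages $u_\varepsilon$, writing the identity for $u_\varepsilon$, and passing to the limit using the standard embedding $\h_2^m \hookrightarrow C((-\infty,T]; L_2)$. The conormal boundary condition, being built into the weak formulation, kills all spatial boundary contributions when one integrates by parts, giving
$$
\tfrac{1}{2}\|u(T,\cdot)\|_{L_2(\bR^d_+)}^2 + \int_{\o_T^+}\sum_{|\alpha|=|\beta|=m} A^{\alpha\beta} D^\beta u\cdot D^\alpha u\, dx\, dt + \lambda \|u\|_{L_2(\o_T^+)}^2 = \sum_{|\alpha|\le m}(-1)^{|\alpha|}\langle f_\alpha, D^\alpha u\rangle_{\o_T^+}.
$$
Strong ellipticity \eqref{strong} bounds the bilinear term below by $\delta \|D^m u\|_{L_2}^2$. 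The $|\alpha|=m$ contributions on the right are handled by Cauchy-Schwarz and Young's inequality, absorbing a small multiple of $\|D^m u\|_{L_2}^2$ into the left side. For $|\alpha|<m$, I would split $\|f_\alpha\| \|D^\alpha u\| = (\lambda^{|\alpha|/(2m)}\|f_\alpha\|)\cdot(\lambda^{-|\alpha|/(2m)}\|D^\alpha u\|)$ and combine Young's inequality with the Gagliardo--Nirenberg interpolation in the half space to absorb the derivative factor into $\|D^m u\|_{L_2}^2 + \lambda\|u\|_{L_2}^2$. This yields \eqref{L_2 simple}.

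For existence, I would apply the method of continuity along the homotopy $L_\tau := (1-\tau)(-\Delta)^m + \tau L_0$ for $\tau \in [0,1]$; each $L_\tau$ satisfies \eqref{strong} with the same $\delta$, so the a priori estimate just proved is uniform in $\tau$. For the base case $\tau = 0$, solvability of the polyharmonic heat equation $u_t + (-\Delta)^m u + \lambda u = \sum D^\alpha f_\alpha$ in $\o_T^+$ with the conormal boundary condition (which for this operator reduces to the Neumann-type condition $D_d^m u = \cdots = D_d^{2m-1}u = 0$) follows by standard spectral theory: $(-\Delta)^m + \lambda$ with these boundary conditions is self-adjoint and nonnegative on $L_2(\bR^d_+)$, so one can use the associated semigroup with Duhamel's formula (or, equivalently, a Galerkin scheme) to construct the solution, and the uniform a priori bound places it in $\h_2^m$. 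The method of continuity then delivers solvability for $\tau = 1$, and uniqueness is immediate by applying \eqref{L_2 simple} to the difference of two solutions.

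The main technical subtlety is the rigorous justification of the energy identity in the $\h_2^m$ class, since $u_t$ lies only in the negative-order space $\bH_2^{-m}$ and cannot be paired with $u$ pointwise. The Steklov-averaging procedure described above, combined with the continuity $u \in C((-\infty, T]; L_2(\bR^d_+))$ which is a consequence of $u \in \h_2^m$, is the standard remedy and allows one to pass cleanly to the limit.
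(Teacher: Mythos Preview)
Your approach is essentially the paper's: test with $u$, invoke strong ellipticity and Young's inequality with interpolation for the a priori bound, then the method of continuity (the paper uses $A^{\alpha\beta}=\delta_{\alpha\beta}I_{n\times n}$ as base) for existence; the paper simply reduces to $u\in C_0^\infty(\overline{\o_T^+})$ by density rather than justifying the $\langle u_t,u\rangle$ pairing via Steklov averages. One slip worth flagging: the conormal condition for $(-\Delta)^m$ does \emph{not} reduce to $D_d^m u=\cdots=D_d^{2m-1}u=0$ on $\{x_d=0\}$---that identity is special to the anisotropic operator $\tilde{\l_0}=D^{\hat\alpha}(A^{\hat\alpha\hat\alpha}D^{\hat\alpha})+\cD_{d-1}^m$ treated in Section~4---but this is harmless since your Galerkin alternative solves the weak base problem without ever needing to identify the boundary conditions pointwise.
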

\begin{proof}
We assume $\lambda>0$. If $\lambda=0$ the inequality \eqref{L_2 simple} holds trivially or we obtain
\begin{equation*}
\sum_{|\alpha|=m}\|D^\alpha u\|_{L_2(\o_T^+)}\le N\sum_{|\alpha|= m}\|f_\alpha\|_{L_2(\o_T^+)}\quad \text{if}\,\, f_\alpha=0 \,\,\text{for}\,\, |\alpha|<m
\end{equation*}
using the inequality \eqref{L_2 simple} for $\lambda>0$ and letting $\lambda \searrow 0$.

If we have proved the inequality \eqref{L_2 simple}, then due to the fact that
\begin{equation*}
u_t=-(-1)^m \sum_{|\alpha|=|\beta|=m}D^\alpha(A^{\alpha\beta}D^\beta u)-\lambda u+\sum_{|\alpha|\le m}D^\alpha f_\alpha,
\end{equation*}
we obtain
$$\|u\|_{\h_2^m(\o_T^+)}\le N \sum_{|\alpha|\le m}\|f_\alpha\|_{L_2(\o_T^+)},$$
where $N=N(d,n,m,\delta,\lambda)$. Then using this estimate, the method of continuity, and the unique solvability of system with coefficients $A^{\alpha\beta}=\delta_{\alpha\beta}I_{n\times n}$ we prove the second assertion of the theorem. Hence, it is clear that we only need to prove the inequality \eqref{L_2 simple}. Moreover, by a density argument it is obvious that we can assume that $u \in C_0^\infty(\overline{\o_T^+})$.

By the weak formulation of the conormal derivative boundary condition, we have
\begin{align*}
&\langle u,u_t\rangle_{\o_T^+}+\sum_{|\alpha|= |\beta|= m}\langle D^\alpha u, A^{\alpha\beta}D^\beta u\rangle_{\o_T^+}+\lambda \langle u,u\rangle_{\o_T^+}\\
&=\sum_{|\alpha|\le m}(-1)^{|\alpha|} \langle D^\alpha u,f_\alpha \rangle_{\o_T^+}.
\end{align*}
From the strong ellipticity {condition}, we get
\begin{equation*}
\delta \int_{\o_T^+}|D^m u|^2 \,dx\,dt\le\sum_{|\alpha|=|\beta|=m} \langle D^\alpha u, A^{\alpha\beta}D^\beta u\rangle_{\o_T^+}.
\end{equation*}
Moreover,
\begin{align*}
2\langle u, u_t\rangle_{\o_T^+}=\int_{\o_T^+}\frac{\partial}{\partial t}|u|^2(t,x)\,dx\,dt =\int_{\mathbb{R}^d_+}|u|^2(T,x) \,dx\ge 0.
\end{align*}
Hence for any $\varepsilon >0$, by Young's inequality,
\begin{align*}
&\delta \int_{\o_T^+}|D^m u|^2 \,dx\,dt+\lambda \int_{\o_T^+}|u|^2\,dx\,dt \le \sum_{|\alpha|\le m}(-1)^{|\alpha|}\int_{\o_T^+}D^\alpha u f_\alpha\,dx\,dt\\
&\le \varepsilon \sum_{|\alpha|\le m}\lambda^{\frac{m-|\alpha|}{m}}\int_{\o_T^+}|D^\alpha u|^2\,dx\,dt +N\varepsilon^{-1}\sum_{|\alpha|\le m}\lambda^{-\frac{m-|\alpha|}{m}} \int_{\o_T^+}|f_\alpha|^2 \,dx\,dt.
\end{align*}
To finish the proof, it suffices to use interpolation inequalities and choose $\varepsilon$ sufficiently small depending on $\delta$, $d$, $m$, and $n$.
\end{proof}

By Theorem \ref{L_2} and adapting the proofs of Lemmas 3.2 and 7.2 in \cite{DK11} to the conormal case, we have the following local $L_2$ estimates.
\begin{lemma}\label{L2 local}
Let $0<r<R<\infty$. Assume that $u \in C^\infty_{loc}(\overline{{\mathbb{R}^{d+1}_+}})$ satisfies
\begin{equation}
u_t+(-1)^m\l_0 u=0 \quad \text{in}\quad Q_R^+ \label{local L_2}
\end{equation}
with the conormal derivative boundary condition on $\{x_d=0\}\cap Q_R$. Then there exists a constant $N=N(d,m,n,\delta)$ such that for $j=1,\ldots,m$,
\begin{equation*}
\|D^j u\|_{L_2(Q_r^+)}\le N(R-r)^{-j}\|u\|_{L_2(Q_R^+)}.
\end{equation*}
\end{lemma}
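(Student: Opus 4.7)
My plan is to adapt the classical higher-order parabolic Caccioppoli argument to the conormal boundary condition; the novelty compared to the Dirichlet setting is that the test function is allowed to be nonzero on $\{x_d=0\}$. Fix intermediate radii $r<\sigma<\rho<R$ and choose a smooth cutoff $\zeta$ that is nonnegative, compactly supported in time, equal to $1$ on $Q_\sigma^+$, vanishes outside $Q_\rho^+$ (except possibly on $\{x_d=0\}$), and satisfies $|D_t^a D_x^b\zeta|\le N(\rho-\sigma)^{-(2ma+|b|)}$. Inserting $\phi=u\zeta^{2m}$ into the weak formulation of \eqref{local L_2}, which is admissible precisely because of the conormal condition, then integrating the $t$-term by parts and expanding the spatial derivatives via the Leibniz rule, I arrive at an identity whose leading part is $\sum_{|\alpha|=|\beta|=m}\int A^{\alpha\beta}\zeta^{2m}D^\alpha u\cdot D^\beta u\,dx\,dt$ and whose remainder consists of a time term $\tfrac12\int u^2(\zeta^{2m})_t$ and spatial cross terms of the form $\int A^{\alpha\beta}D^\beta u\cdot D^{\alpha-\gamma}\zeta^{2m}\cdot D^\gamma u$ with $\gamma<\alpha$.

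Using the strong ellipticity condition \eqref{strong} on the leading term and Young's inequality (together with the elementary bound $|D^k\zeta^{2m}|\le N(\rho-\sigma)^{-k}\zeta^{2m-k}$) to absorb all occurrences of $\zeta^m D^m u$ into the left-hand side, I deduce the basic Caccioppoli bound
\[
\int_{Q_\sigma^+}|D^m u|^2\,dx\,dt\le N\sum_{i=0}^{m-1}(\rho-\sigma)^{-2(m-i)}\int_{Q_\rho^+}|D^i u|^2\,dx\,dt.
\]
To remove the lower-order derivatives on the right, I combine this with the scale-invariant Gagliardo--Nirenberg interpolation inequality
\[
\|D^i u\|_{L_2(Q_\tau^+)}\le \varepsilon\tau^{m-i}\|D^m u\|_{L_2(Q_\tau^+)}+N_\varepsilon\tau^{-i}\|u\|_{L_2(Q_\tau^+)},\qquad 1\le i\le m-1,
\]
valid on the Lipschitz half-cylinder. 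Choosing $\rho=(R+\sigma)/2$ and $\varepsilon$ small enough, the recursion for $F(\sigma):=(R-\sigma)^{2m}\|D^m u\|_{L_2(Q_\sigma^+)}^2$ reduces to $F(\sigma)\le N\|u\|_{L_2(Q_R^+)}^2+\tfrac12 F\bigl((R+\sigma)/2\bigr)$. Iterating along $\sigma_k$ with $R-\sigma_k=(R-r)2^{-k}$ and noting that $F(\sigma_k)\to0$ as $k\to\infty$ (because $u\in C_{loc}^\infty$), I conclude
\[
\|D^m u\|_{L_2(Q_r^+)}\le N(R-r)^{-m}\|u\|_{L_2(Q_R^+)}.
\]

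Finally, for $1\le j\le m-1$, applying the estimate just established on $Q_{(R+r)/2}^+$ and then the same interpolation inequality on $Q_r^+$ at scale $(R-r)/2$ yields $\|D^j u\|_{L_2(Q_r^+)}\le N(R-r)^{-j}\|u\|_{L_2(Q_R^+)}$. I expect the main obstacle to be the careful derivation of the Caccioppoli identity in the first step: one must verify that $u\zeta^{2m}$ is admissible in the weak conormal formulation (the nonvanishing on $\{x_d=0\}$ being the key subtlety) and correctly track the signs and structure of the cross terms produced by the Leibniz expansion. Once this identity is in hand, the remaining interpolation and iteration steps are standard.
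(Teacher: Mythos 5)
Your Caccioppoli-with-iteration argument is correct in its essence and is in the same spirit as the paper's reference (the paper only cites \cite{DK11}, Lemmas 3.2 and 7.2, whose proofs are of the same energy-plus-cutoff type). You correctly identify the key feature of the conormal setting: because the test function need not vanish on $\{x_d=0\}$, $\phi=u\zeta^{2m}$ is admissible in the weak formulation, so the standard interior Caccioppoli identity carries over to the flat boundary without modification. The expansion of $D^\alpha(u\zeta^{2m})$, the coercivity via \eqref{strong} applied to $\zeta^m D^\alpha u$, the bound $|D^k\zeta^{2m}|\le N(\rho-\sigma)^{-k}\zeta^{2m-k}$, and the absorption via Young all go through. (The time term at the top of the cylinder contributes a nonnegative boundary integral $\tfrac12\int_{B^+}u^2\zeta^{2m}|_{t=0}\,dx$ with the right sign; you may want to say so explicitly since your cutoff cannot literally be ``compactly supported in time'' if it equals $1$ on $Q^+_\sigma$.) The interpolation-plus-iteration reducing to $F(\sigma)\le N\|u\|^2+\tfrac12F\bigl((R+\sigma)/2\bigr)$ with $F(\sigma_k)\to 0$ is also sound, since $u\in C_{loc}^\infty$ makes $\|D^m u\|_{L_2(Q_R^+)}$ finite.

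Two small points to tighten. First, in the last step (the case $1\le j\le m-1$), interpolating ``on $Q_r^+$ at scale $(R-r)/2$'' is not literally valid when $(R-r)/2>r$, i.e.\ when $R>3r$, since the scale then exceeds the domain radius; instead apply the scale-invariant interpolation on $Q_{(R+r)/2}^+$ with $\eta=\bigl(\tfrac{R-r}{R+r}\bigr)^{m-j}\le 1$, which produces exactly the factors $\bigl(\tfrac{R-r}{2}\bigr)^{\pm}$ you want, and then restrict to $Q_r^+$. (Alternatively, first prove the case $R\le 3r$ and reduce $R>3r$ to it by replacing $r$ with $(R-r)/2$.) Second, your cross-term estimate uses $|A^{\alpha\beta}|\le K$, so the constant you obtain is $N(d,m,n,\delta,K)$; the paper's statement writes $N(d,m,n,\delta)$, and since the lemma cannot actually be uniform as $K\to\infty$ (take $A^{\alpha\beta}=\delta_{\alpha\beta}I+K\cdot$(antisymmetric)), your $K$-dependence is the correct one and the paper's displayed dependence is evidently a slip inherited from Theorem~\ref{L_2}, whose proof genuinely avoids $K$.
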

\begin{corollary}
            \label{L_2 high order}
Let $0<r<R<\infty$ and assume that $u \in C^\infty_{loc}(\overline{{\mathbb{R}^{d+1}_+}})$ satisfies \eqref{local L_2} with the conormal derivative boundary condition on  $\{x_d=0\}\cap Q_R$. Then for any multi-index $\theta$ satisfying $\theta_d\le m$, we have
\begin{equation*}
\|D^\theta u\|_{L_2(Q_r^+)}\le N\|u\|_{L_2(Q_R^+)},
\end{equation*}
where $N=N(d,m,n,\delta,R,r,\theta)$.
\end{corollary}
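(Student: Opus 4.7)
The plan is to exploit the fact that the leading coefficients $A^{\alpha\beta}=A^{\alpha\beta}(t)$ are independent of $x$, so tangential differentiation in $x'=(x_1,\ldots,x_{d-1})$ commutes with the operator $\partial_t+(-1)^m\l_0$. Combined with an iteration of Lemma \ref{L2 local}, this will yield the desired estimate for $D^\theta u$ with $\theta=(\theta',\theta_d)$, where $|\theta'|$ is arbitrary and $\theta_d\le m$.

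First I would verify that for any tangential multi-index $\gamma\in\mathbb{N}^{d-1}$, the function $v:=D_{x'}^\gamma u$ again satisfies the homogeneous system \eqref{local L_2} together with the conormal derivative boundary condition on any slightly smaller half-cylinder $Q_{R'}^+$ with $R'<R$. The PDE part is immediate since $D_{x'}^\gamma$ commutes with each $D^\alpha(A^{\alpha\beta}(t)D^\beta\,\cdot\,)$. For the weak formulation of the conormal condition
\[
\int_{Q_R^+}\bigl(-v\,\phi_t+\sum_{|\alpha|=|\beta|=m} A^{\alpha\beta}D^\beta v\, D^\alpha\phi\bigr)\,dx\,dt=0,
\]
one integrates by parts to move $D_{x'}^\gamma$ from $v$ onto the test function $\phi\in C_0^\infty(Q_{R'}\cap\overline{\mathbb{R}^d_+})$; no boundary terms arise because $D_{x'}$ is tangential, and the resulting identity is exactly the weak formulation for $u$ tested against $(-1)^{|\gamma|}D_{x'}^\gamma \phi$, which vanishes.

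Next I would chain the $L_2$ estimates. Pick intermediate radii $r=r_0<r_1<\cdots<r_{k+1}=R$ with $k=|\theta'|$, say equally spaced. For each $i=0,1,\ldots,k-1$, apply Lemma \ref{L2 local} with $j=1$ to a suitable tangential derivative $w^{(i)}=D_{x'}^{\gamma^{(i)}}u$ of order $i$ (itself a solution of the homogeneous system by the previous step) on the pair $Q_{r_{k-i-1}}^+\subset Q_{r_{k-i}}^+$; this bounds $\|Dw^{(i)}\|_{L_2(Q_{r_{k-i-1}}^+)}$ by $N\|w^{(i)}\|_{L_2(Q_{r_{k-i}}^+)}$, and in particular controls any tangential derivative of $w^{(i)}$ of order one. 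Composing all $k$ steps gives
\[
\|D_{x'}^{\theta'}u\|_{L_2(Q_{r_1}^+)}\le N\|u\|_{L_2(Q_R^+)}.
\]
A final application of Lemma \ref{L2 local} to $v:=D_{x'}^{\theta'}u$, this time with $j=\theta_d\le m$, on the pair $Q_r^+\subset Q_{r_1}^+$ yields
\[
\|D^\theta u\|_{L_2(Q_r^+)}\le \|D^{\theta_d}v\|_{L_2(Q_r^+)}\le N(r_1-r)^{-\theta_d}\|v\|_{L_2(Q_{r_1}^+)},
\]
using that $D^\theta u=D_d^{\theta_d}D_{x'}^{\theta'}u$ is one component of $D^{\theta_d}v$. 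Chaining the two inequalities completes the proof.

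The main (mild) technical point is the first step: checking that tangential differentiation preserves the conormal boundary condition in its weak form on a local cylinder. Once that bookkeeping is in hand, the remainder is a transparent induction on $|\theta'|$ via Lemma \ref{L2 local}.
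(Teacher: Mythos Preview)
Your proposal is correct and follows essentially the same approach as the paper: observe that tangential derivatives $D_{x'}^k u$ satisfy the same homogeneous system with the same conormal boundary condition, then iterate Lemma \ref{L2 local} over a chain of intermediate radii. The paper simply asserts the first step as ``obvious'' and compresses the iteration into one line, whereas you spell out the weak-formulation argument and the radii bookkeeping (with a harmless off-by-one in the indexing of the pairs $Q_{r_{k-i-1}}^+\subset Q_{r_{k-i}}^+$, and noting that the final application of Lemma \ref{L2 local} is unneeded when $\theta_d=0$).
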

 \begin{proof}
 Obviously, $D_{x^\prime}^{k}u$, $k=1,2,\ldots$ satisfies the same equation and boundary condition as $u$. After applying Lemma \ref{L2 local} repeatedly, we immediately see that
 \begin{equation*}
 \|D^jD_{x^\prime}^{k} u\|_{L_2(Q_{r}^+)}\le N\|u\|_{L_2(Q_R^+)},\quad j=1,2,\ldots,m.
 \end{equation*}
 The corollary is proved.
 \end{proof}

 \begin{lemma}\label{tech 3}
Let $R\in (0,\infty)$. Assume that $u\in C_{loc}^\infty(\overline{\o_\infty^+})$ satisfies
\begin{equation*}
u_t+(-1)^m \l_0 u=\sum_{|\alpha|\le m}D^\alpha f_\alpha \quad \text{in}\quad Q_{2R}^+
\end{equation*}
with the conormal derivative boundary condition on $\{x_d=0\}\cap Q_{2R}$. Let $P(x)$ be a vector-valued polynomial of order $m-1$ and satisf{y}
\begin{equation*}
(D^\alpha P(x))_{Q_R^+}=(D^\alpha u)_{Q_R^+}
\end{equation*}
for  $|\alpha|<m$. Let $v=u-P(x)$. Then there exists a constant $N=N(d,n,m)$ such that
\begin{align*}
&\|D^\alpha v\|_{L_2(Q_R^+)}\\
&\le NR^{m-|\alpha|}\|D^m u\|_{L_2(Q_R^+)}+N\sum_{|\beta|\le m}R^{3m+d/2-|\alpha|-|\beta|}\|f_\beta\|_{L_\infty(Q_R^+)},
\end{align*}
where $|\alpha|<m$. If $f_\alpha=0$ for any $\alpha$, then the following inequality hold{s}
\begin{equation*}
\|D^\alpha v\|_{L_2(Q_R^+)}\le NR^{|\beta|-|\alpha|}\|D^\beta v\|_{L_2(Q_R^+)},
\end{equation*}
where $|\alpha|<|\beta|\le m$.
\end{lemma}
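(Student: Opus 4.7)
Setting $v = u - P$, the choice of $P$ forces $(D^\gamma v)_{Q_R^+} = 0$ for every multi-index $\gamma$ with $|\gamma| < m$, while $D^m v = D^m u$ since $P$ has degree at most $m-1$. The first inequality will be obtained by iterating a one-step Poincar\'{e}-type estimate in which the vanishing-average hypothesis $(D^\gamma v)_{Q_R^+} = 0$ is preserved as $|\gamma|$ runs through $|\alpha|, |\alpha|+1, \ldots, m-1$; the second inequality is the same argument with $f_\beta \equiv 0$.

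\textbf{The one-step estimate.} Fix $\gamma$ with $|\gamma| < m$ and set $w = D^\gamma v$, so $(w)_{Q_R^+} = 0$. Decompose $w(t,x) = [w(t,x) - w_B(t)] + w_B(t)$, where $w_B(t) := (w(t,\cdot))_{B_R^+}$ depends only on $t$. The spatial Poincar\'{e} inequality applied slice-by-slice yields $\|w - w_B\|_{L_2(Q_R^+)} \le NR\sum_i\|D^{\gamma+e_i}v\|_{L_2(Q_R^+)}$. Since $(w)_{Q_R^+} = 0$, the function $w_B$ has zero mean over $(-R^{2m},0)$, so the one-dimensional Poincar\'{e} inequality gives $\|w_B\|_{L_2((-R^{2m},0))} \le NR^{2m}\|w_B'\|_{L_2((-R^{2m},0))}$, hence $\|w_B\|_{L_2(Q_R^+)} \le NR^{2m+d/2}\|w_B'\|_{L_2((-R^{2m},0))}$ after multiplying by the volume factor $|B_R^+|^{1/2}\sim R^{d/2}$.

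\textbf{Bounding $w_B'$ via the PDE.} Since $P$ is $t$-independent, $|B_R^+|\,w_B'(t) = \partial_t \int_{B_R^+} D^\gamma u(t,x)\,dx$. Take a smooth cutoff $\tilde\eta \in C^\infty_0(\overline{B_R^+})$ with $\tilde\eta \equiv 1$ on $B_{R/2}^+$, supported in $B_R^+$, vanishing on $\partial B_R \cap \{x_d > 0\}$ but unrestricted on $\{x_d = 0\}$, with $|D^j \tilde\eta| \le NR^{-j}$; this matches the boundary behavior of admissible test functions in the weak formulation of the conormal problem. Using $\tilde\eta$ as test function and integrating by parts in space to transfer $D^\gamma$ onto $\tilde\eta$, one obtains
$$|w_B'(t)| \le NR^{-|\gamma|-m-d/2}\|D^m u(t,\cdot)\|_{L_2(B_R^+)} + N\sum_{|\beta|\le m}R^{-|\gamma|-|\beta|}\|f_\beta\|_{L_\infty(B_R^+)}.$$
Squaring, integrating in $t\in(-R^{2m},0)$, and multiplying by $R^{2m+d/2}$ yields $\|w_B\|_{L_2(Q_R^+)} \le NR^{m-|\gamma|}\|D^m u\|_{L_2(Q_R^+)} + N\sum R^{3m+d/2-|\gamma|-|\beta|}\|f_\beta\|_{L_\infty(Q_R^+)}$, completing the one-step bound. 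Iterating it from $\gamma = \alpha$ up to level $m$ (where $D^m v = D^m u$), each per-step error $R^{3m+d/2-|\gamma|-|\beta|}$ accumulates a factor $R^{|\gamma|-|\alpha|}$ over the $|\gamma|-|\alpha|$ intermediate Poincar\'{e} steps, cancelling the $\gamma$-dependence and producing the uniform exponent $R^{3m+d/2-|\alpha|-|\beta|}$ of the stated inequality; the homogeneous case is identical with $f_\beta\equiv 0$.

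\textbf{Main obstacle.} The technical crux is the integration by parts that transfers $D^\gamma$ from $u$ onto $\tilde\eta$: the conormal boundary condition only annihilates the complete $2m$-th order boundary form, not individual boundary integrals of $D^\gamma u$ at $\{x_d = 0\}$ for $|\gamma| < m$. The resolution is to remain entirely inside the weak formulation, using $\tilde\eta$ as an admissible test function whose boundary behavior exactly matches what the conormal weak form allows; the conormal condition is then absorbed automatically into the bilinear form, so no classical boundary integrals of $D^\gamma u$ are ever computed in isolation.
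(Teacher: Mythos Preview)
Your overall strategy---split $D^\gamma v$ into a spatial oscillation and a spatial mean, control the former by spatial Poincar\'e, the latter by a time Poincar\'e together with the PDE---is exactly the paper's. The gap is in the step ``Bounding $w_B'$ via the PDE.'' You define $w_B(t)=(D^\gamma v)_{B_R^+}$ as the genuine average, so $|B_R^+|\,w_B'(t)=\int_{B_R^+}D^\gamma u_t\,dx$, but the weak formulation with test function $\tilde\eta$ (or $D^\gamma\tilde\eta$) only gives you information about $\int_{B_R^+} u_t\,D^\gamma\tilde\eta\,dx$. These two quantities are not the same: passing from one to the other requires integrating $D^\gamma$ by parts against $\tilde\eta$, and since $\tilde\eta$ is, by your own construction, nonzero on $\{x_d=0\}$, every integration in $x_d$ produces a boundary term involving lower normal derivatives of $u_t$ on $\{x_d=0\}$. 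The conormal condition does not kill those terms (it controls certain combinations of $m$th through $(2m-1)$st normal derivatives of $u$, not $D_d^j u_t$ for $j<m$), so your displayed bound on $|w_B'(t)|$ is not justified as written. Your ``Main obstacle'' paragraph correctly identifies the danger but the proposed resolution does not close it, because the weak form controls a different quantity from the one you need.

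The fix the paper uses is simple and removes the obstacle entirely: do not work with the true average. Pick $\xi\in C_0^\infty(B_1^+)$ with unit integral---compactly supported in the \emph{open} half-ball, hence vanishing on $\{x_d=0\}$ as well---and set $g_\gamma(t)=\int_{B_1^+}\xi(y)D^\gamma v(t,y)\,dy$. The spatial Poincar\'e step still goes through (write $D^\gamma v(t,x)-g_\gamma(t)=\int\xi(y)\big(D^\gamma v(t,x)-D^\gamma v(t,y)\big)\,dy$ and use Cauchy--Schwarz plus the standard Poincar\'e inequality on $B_1^+$). For $\partial_t g_\gamma$ you now integrate by parts freely---no boundary terms anywhere because $\xi$ vanishes near all of $\partial B_1^+$---leaving $|\gamma|+1$ derivatives on $v$ and moving the remaining $2m-1$ onto $\xi$. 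This yields $|\partial_t g_\gamma(t)|\le N\|D^{|\gamma|+1}v(t,\cdot)\|_{L_2(B_1^+)}+N\sum_{|\beta|\le m}\|f_\beta\|_{L_\infty}$, and induction on $|\gamma|$ finishes. With this choice of weight the conormal boundary condition is never invoked in the lemma at all.
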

\begin{proof}
By using a scaling argument, we only need to consider the case when $R=1$. Choose $\xi(y)\in C_0^\infty(B_1^+)$ with unit integral. Then set
\begin{equation*}
g_\alpha(t)=\int_{B_1^+}D^\alpha v(t,y)\xi(y)\,dy,\quad c_\alpha=\int_{-1}^0g_\alpha(t)\,dt
\end{equation*}
for $|\alpha|<m$. By H\"{o}lder's inequality and the Poincar\'e inequality, the following estimate holds
 \begin{align}\nonumber
& \int_{B^+_1}| D^\alpha v(t,x) -g_\alpha(t)|^2 \,dx=\int_{B^+_1}|\int_{B^+_1}( D^\alpha v(t,x)- D^\alpha v(t,y))\xi(y)\,dy|^2dx\\
&\le N\int_{B^+_1}\int_{B^+_1}|D^\alpha v(t,x)-D^\alpha v(t,y)|^2\,dy\,dx \le N\int_{B^+_1}|D^{|\alpha|+1} v(t,y)|^2\,dy.\nonumber
 \end{align}
Because $(D^\alpha v)_{Q^+_1}=0$, by the triangle inequality and the Poincar\'e inequality
 \begin{align}\nonumber
 &\|D^\alpha v\|_{L_2(Q^+_1)}\le \|D^\alpha v-c_\alpha\|_{L_2(Q^+_1)} \\\nonumber
 &\le\|D^\alpha v-g_\alpha(t)\|_{L_2(Q^+_1)}+\|g_\alpha(t)-c_\alpha\|_{L_2(Q^+_1)}\\
 &\le N\|D^{|\alpha|+1}v\|_{L_2(Q^+_1)}+N\|\partial_t g_\alpha\|_{L_2(-1,0)}.\label{tech3.2}
 \end{align}
Since {$v$ satisfies the same equations as $u$}, by the definition of $g_\alpha$,
\begin{align*}
\partial_tg_\alpha(t)&=\int_{B^+_1}\xi(y)D^\alpha\partial_t {v}(t,y)\,dy\\
&=\int_{B^+_1}(-1)^{m+1}\xi(y)D^\alpha\l_0{v}(t,y)\,dy+\sum_{|\beta|\le m}\int_{B^+_1}\xi(y)D^\alpha D^\beta f_\beta dy.
\end{align*}
For the first term of the right-hand side of the equality above, we leave ${|\alpha|+1}$ derivatives on ${v}$ and move all the others to $\xi$. For the second term, we move all the derivatives to $\xi$.
Therefore, by the Cauchy--Schwarz inequality,
\begin{align}
|\partial_t g_\alpha(t)|&\le N \int_{B_1^+}|{D^{|\alpha|+1}v}|\, dy+N\sum_{|\alpha|\le m}\|f_\alpha\|_{L_\infty(Q_1^+)}\nonumber\\
&\le N\|{D^{|\alpha|+1}v}(t,\cdot)\|_{L_2(B_1^+)}+N\sum_{|\alpha|\le m}\|f_\alpha\|_{L_\infty(Q_1^+)}.\label{tech 3.3}
\end{align}
Combining \eqref{tech 3.3} and \eqref{tech3.2}, we prove the desired estimate for $R=1$ by induction.
\end{proof}

\section{$L_p$ estimate of $D_d^m u$ for systems with special coefficients}
{In this section} we consider the following special system:
\begin{equation*}
\tilde{\l_0} =D^{\hat{\alpha}} (A^{\hat{\alpha}\hat{\alpha}}(t)D^{\hat{\alpha}})+ \cD^m_{d-1}I_{n\times n},
\end{equation*}
where
\begin{equation*}
\hat{\alpha}=(0,\cdots,0, m),\quad \cD^m_{d-1}=\sum_{j=1}^{d-1}D_j^{2m}.
\end{equation*}
\begin{lemma}\label{L2 special}
There exists a constant $N=N(d,m,n,\delta)$ such that for any $\lambda \ge 0$,
\begin{equation*}
\sum_{|\alpha|\le m} \lambda^{1-\frac{|\alpha|}{2m}}\|D^\alpha u\|_{L_2(\o_T^+)}\le N\sum_{|\alpha|\le m}\lambda^{\frac{|\alpha|}{2m}}\|f_\alpha\|_{L_2(\o_T^+)}
\end{equation*}
provided that $u\in \h_2^m(\o_T^+)$ satisfies
\begin{equation*}
u_t+(-1)^m \tilde{\l}_0 u+\lambda u=\sum_{|\alpha|\le m}D^\alpha f_\alpha 
\end{equation*}
in $\o_T^+$ with the conormal derivative boundary condition on $\{x_d=0\}$, and $f_\alpha \in L_2(\o_T^+)$, $|\alpha|\le m$. Furthermore, for any $\lambda>0$ and $f_\alpha \in L_2(\o_T^+)$, $|\alpha|\le m$, there exists a unique solution $u \in \h_2^m(\o_T^+)$ to the system \eqref{L2 half} with the conormal derivative boundary condition on $\{x_d=0\}$.
\end{lemma}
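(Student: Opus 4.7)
The plan is to mirror the proof of Theorem~\ref{L_2} almost verbatim, the only new issue being that in the natural divergence-form representation the special operator $\tilde{\l_0}$ does not itself satisfy the strong ellipticity condition~\eqref{strong}: its mixed $m$-th order coefficients vanish. As in Theorem~\ref{L_2}, I would first dispose of $\lambda=0$ by passing to the limit from $\lambda>0$, reduce to $u\in C_0^\infty(\overline{\o_T^+})$ by density, and obtain unique solvability from the a priori estimate via the method of continuity, linking $\tilde{\l_0}$ to a reference operator (for instance $\sum_{|\gamma|=m}\tfrac{m!}{\gamma!}D^{2\gamma}I_{n\times n}=\Delta^m I_{n\times n}$, which does satisfy~\eqref{strong}) to which Theorem~\ref{L_2} directly applies.

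Testing the equation against $u$ and invoking the weak formulation of the conormal boundary condition yields the energy identity
\[
\langle u,u_t\rangle_{\o_T^+}+\sum_{j=1}^{d-1}\|D_j^m u\|_{L_2(\o_T^+)}^2+\langle D_d^m u,A^{\hat{\alpha}\hat{\alpha}}(t)D_d^m u\rangle_{\o_T^+}+\lambda\|u\|_{L_2(\o_T^+)}^2=\sum_{|\alpha|\le m}(-1)^{|\alpha|}\langle f_\alpha,D^\alpha u\rangle_{\o_T^+}.
\]
Since $2\langle u,u_t\rangle_{\o_T^+}=\|u(T,\cdot)\|_{L_2(\bR^d_+)}^2\ge 0$ and, specializing~\eqref{strong} to $\xi_\alpha$ supported on $\alpha=\hat{\alpha}$, $A^{\hat{\alpha}\hat{\alpha}}(t)\xi\cdot\xi\ge\delta|\xi|^2$, the left-hand side is bounded below by $\delta\sum_{k=1}^{d}\|D_k^m u\|_{L_2(\o_T^+)}^2+\lambda\|u\|_{L_2(\o_T^+)}^2$.

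The point of departure from Theorem~\ref{L_2} is that this energy directly controls only the pure derivatives $D_k^m u$, whereas~\eqref{L_2 simple} includes all $D^\alpha u$ with $|\alpha|=m$. I would bridge the gap via a partial Fourier argument: taking the Fourier transform in $x'$ only and applying, for each fixed $\xi'$, the 1D Gagliardo--Nirenberg interpolation $\|D_d^{\alpha_d}v\|_{L_2([0,\infty))}^2\le C\|v\|_{L_2}^{2(m-\alpha_d)/m}\|D_d^m v\|_{L_2}^{2\alpha_d/m}$ in $x_d$, then Young's inequality, and finally the weighted AM--GM bound $|\xi'|^{2m}\le C\sum_{k<d}|\xi_k|^{2m}$, one derives
\[
\sum_{|\alpha|=m}\|D^\alpha u\|_{L_2(\o_T^+)}^2\le C\sum_{k=1}^{d}\|D_k^m u\|_{L_2(\o_T^+)}^2,
\]
so that the full $m$-th order norm is in turn controlled by the pure-derivative energy. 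From here the argument repeats Theorem~\ref{L_2}: Young's inequality with weights matching the $\lambda$-scaling on the right-hand side, combined with the standard interpolation $\|D^\alpha u\|_{L_2}^2\le N\|D^m u\|_{L_2}^{2|\alpha|/m}\|u\|_{L_2}^{2(m-|\alpha|)/m}$, absorbs the intermediate-order terms once $\varepsilon$ is chosen small enough in terms of $d,m,n,\delta$, yielding~\eqref{L_2 simple}. The main obstacle is precisely this passage from pure to full $m$-th order derivative control; everything else is mechanical.
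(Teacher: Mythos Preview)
Your proposal is correct and follows essentially the same route as the paper: both reduce to the a priori estimate, test against $u$ to obtain the energy identity, use $A^{\hat\alpha\hat\alpha}\ge\delta I$ to control $\sum_{k=1}^d\|D_k^m u\|_{L_2}^2$, and then upgrade this to control of the full $\|D^m u\|_{L_2}$ before finishing exactly as in Theorem~\ref{L_2}. The only difference is cosmetic: where the paper invokes Proposition~1 of \cite{DK10} for the inequality $\|D^m u\|_{L_2}\le N\sum_{k=1}^d\|D_k^m u\|_{L_2}$, you sketch its proof directly via partial Fourier transform in $x'$ and one-dimensional interpolation in $x_d$, which is precisely the standard argument behind that proposition.
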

\begin{proof}
We follow the proof of Theorem \ref{L_2 simple} and only need to show an a priori estimate. By the weak formulation of the conormal derivative boundary condition,
\begin{align*}
\langle u,u_t\rangle_{\o_T^+}+\sum_{j=1}^{d-1}\langle D_j^m u, D_j^m u\rangle_{\o_T^+}+\langle D_d^mu, A^{\hat{\alpha}\hat{\alpha}}D^m_d u\rangle+\lambda \langle u,u\rangle_{\o_T^+}\\
=\sum_{|\alpha|\le m}(-1)^{|\alpha|} \langle D^\alpha u,f_\alpha \rangle_{\o_T^+}.
\end{align*}
 It is obvious that $A^{\hat{\alpha}\hat{\alpha}}\ge \delta I_{n\times n}$. Hence
$$\sum_{j=1}^{d-1}\langle D_j^m u, D_j^m u\rangle_{\o_T^+}+\langle D_d^mu, A^{\hat{\alpha}\hat{\alpha}}D^m_d u\rangle\ge \delta \sum_{j=1}^d\|D_j^mu\|^2_{L_2(\o_T^+)}.$$
From Proposition 1 of \cite{DK10}, we know that
\begin{equation*}
\|D^m u\|_{L_2(\o_T^+)}\le N\sum_{j=1}^d\|D_j^m u\|_{L_2(\o_T^+)}.
\end{equation*}
Therefore,
$$\sum_{j=1}^{d-1}\langle D_j^m u, D_j^m u\rangle_{\o_T^+}+\langle D_d^mu, A^{\hat{\alpha}\hat{\alpha}}D^m_d u\rangle\ge \frac{\delta}{N}\|D^mu\|^2_{L_2(\o_T^+)}.$$
The rest of the proof just follows the proof of Theorem \ref{L_2}, and thus is omitted.
\end{proof}

\begin{remark}
                            \label{rem4.2}
From the proof of Lemma \ref{L2 special}, we see that Theorem \ref{L_2} still holds when the strong ellipticity is replaced by a weak condition:
\begin{equation}
                                    \label{eq3.06}
\delta \sum_{j=1}^{d}\langle D_j^m u, D_j^m u\rangle_{\o_T^+}\le\sum_{|\alpha|=|\beta|=m} \langle D^\alpha u, A^{\alpha\beta}D^\beta u\rangle_{\o_T^+}.
\end{equation}
for any $u\in C_0^\infty(\overline{\o_T^+})$.
\end{remark}

We have the following observation.
\begin{lemma}\label{boundary condition}
Assume $u\in C_{loc}^\infty(\overline{{\o_\infty^+}})$ and satisfies
\begin{equation}
u_t+(-1)^m \tilde{\l_0} u=0\label{specialest}
\end{equation}
in $Q_4^+$ with the conormal derivative boundary condition on $\{x_d=0\}\cap Q_4$. Then the boundary condition is given by
\begin{equation*}
D_d^m u=\cdots=D_d^{2m-1}u=0\quad \text{on}\quad \{x_d=0\}\cap Q_4.
\end{equation*}
\end{lemma}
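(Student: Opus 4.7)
The plan is to read off the boundary conditions directly from the weak formulation by integrating by parts. Since the only nonzero leading coefficients of $\tilde{\l_0}$ are $A^{me_j,me_j}=I$ for $j\le d-1$ and $A^{\hat\alpha\hat\alpha}(t)$, the conormal weak formulation (restricted to test functions $\phi\in C_0^\infty$ supported in $Q_4\cap \overline{\o_\infty^+}$, which may be nonzero on $\{x_d=0\}\cap Q_4$) reads
\[
\int\!\!\int\Big(-u\phi_t+\sum_{j=1}^{d-1}D_j^m u\,D_j^m\phi+D_d^m u\cdot A^{\hat\alpha\hat\alpha}D_d^m\phi\Big)\,dx\,dt=0.
\]

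Using the smoothness of $u$, I would integrate by parts in each term. The time term and the $j\le d-1$ spatial terms produce no trace on $\{x_d=0\}$, yielding $u_t\phi$ and $(-1)^m D_j^{2m}u\cdot\phi$, respectively. For the remaining term, integrating $\int_0^\infty D_d^m u\cdot A^{\hat\alpha\hat\alpha}D_d^m\phi\,dx_d$ by parts $m$ times in $x_d$ gives the bulk contribution $(-1)^m\int_0^\infty D_d^{2m}u\cdot A^{\hat\alpha\hat\alpha}\phi\,dx_d$ together with a boundary sum
\[
\sum_{k=0}^{m-1}(-1)^{k+1}D_d^{m+k}u(t,x',0)\cdot A^{\hat\alpha\hat\alpha}(t)D_d^{m-1-k}\phi(t,x',0).
\]
The bulk pieces combine into $(u_t+(-1)^m\tilde{\l_0}u)\phi$, which vanishes by hypothesis, so the weak identity reduces to
\[
\int_{-\infty}^0\!\!\!\int_{\bR^{d-1}}\sum_{k=0}^{m-1}(-1)^{k+1}A^{\hat\alpha\hat\alpha}(t)D_d^{m+k}u(t,x',0)\cdot D_d^{m-1-k}\phi(t,x',0)\,dx'dt=0
\]
for every admissible test function $\phi$.

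To conclude, I would exploit the freedom in choosing the traces $D_d^j\phi|_{x_d=0}$ for $j=0,\dots,m-1$. Writing $\phi(t,x',x_d)=\eta(x_d)\sum_{j=0}^{m-1}\tfrac{x_d^j}{j!}\psi_j(t,x')$ with a cutoff $\eta\in C_0^\infty(\bR)$ equal to $1$ near $0$, one sees that the functions $\psi_j\in C_0^\infty$ may be prescribed arbitrarily and independently. Activating only one $\psi_{m-1-k}$ at a time, the identity forces $A^{\hat\alpha\hat\alpha}(t)D_d^{m+k}u(t,x',0)=0$ for every $k=0,\ldots,m-1$. Since the strong ellipticity condition \eqref{strong}, applied with $\xi_\alpha=0$ for $\alpha\neq\hat\alpha$, implies $A^{\hat\alpha\hat\alpha}(t)\ge \delta I_{n\times n}$ and hence is invertible, this yields $D_d^m u=\cdots=D_d^{2m-1}u=0$ on $\{x_d=0\}\cap Q_4$.

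The only subtlety is the bookkeeping of the $m$ boundary terms produced by the iterated integration by parts and the verification that the traces $D_d^j\phi|_{x_d=0}$ are indeed independent; both are routine but worth spelling out carefully. Everything else follows from the weak formulation and the positivity of $A^{\hat\alpha\hat\alpha}$.
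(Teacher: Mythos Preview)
Your proof is correct and follows essentially the same route as the paper: write the weak formulation, integrate by parts $m$ times in $x_d$ to isolate the boundary sum $\sum_{j=1}^m(-1)^{j+1}A^{\hat\alpha\hat\alpha}D_d^{m+j-1}u\,D_d^{m-j}\phi$ on $\Sigma=\{x_d=0\}\cap Q_4$, and use the arbitrariness of $\phi$ together with the invertibility of $A^{\hat\alpha\hat\alpha}$ to conclude. You are simply more explicit than the paper about constructing test functions with independently prescribed normal traces and about deducing $A^{\hat\alpha\hat\alpha}\ge\delta I$ from the strong ellipticity condition.
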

\begin{proof}
Following the definition, the weak formulation of the system is
\begin{align*}
-\int_{Q_4^+}u \phi_t \,dx\,dt+\sum_{j=1}^{d-1}\int_{Q_4^+}D_j^muD_j^m\phi\,dx\,dt+\int_{Q_4^+} A^{\hat{\alpha}\hat{\alpha}}(t)D^m_du D^m_d\phi \,dx\,dt=0,\nonumber
\end{align*}
where $\phi\in C_0^{\infty}(\overline{Q_4^+})$.
Since only the boundary condition on $\{x_d=0\}$ is considered, we integrate by parts and boundary terms appear in the last term of the equation above. Let us denote $\Sigma:= \{x_d=0\}\cap Q_4$. We integrate by parts repeatedly and the boundary term is
\begin{equation*}
\int_\Sigma \sum_{j=1}^m (-1)^{j+1}A^{\hat{\alpha}\hat{\alpha}}D_d^{m+j-1} u \, D_d^{m-j} \phi \,dx^\prime \,dt=0.
\end{equation*}
Since $\phi$ is an arbitrary smooth function and $A^{\hat{\alpha}\hat{\alpha}}$ is positive definite, we find that
\begin{equation*}
A^{\hat{\alpha}\hat{\alpha}}D_d^{m+j-1}u=0\quad\text{on}\quad \Sigma \quad  \text{for}\quad j=1,2,\ldots, m,
\end{equation*}
i.e.,
\begin{equation*}
D_d^m u=\ldots=D_d^{2m-1}u=0\quad \text{on}\quad {\Sigma}.
\end{equation*}
The lemma is proved.
\end{proof}

We state a conclusion in Remark 6 of \cite{DK10} and notice that the operator ${\tilde\l_0}$ satisfies the Legendre--Hadamard condition \eqref{hadamard}.
\begin{lemma}\label{rk dz}
Assume that $u\in C_{loc}^\infty(\overline{{\mathbb{R}^{d+1}_+}})$ and satisfies
\begin{align*}
&u_t+(-1)^m {\tilde\l_0} u=0\quad \text{in}\quad Q_{2R}^+,\\
&u=\cdots=D_d^{m-1}u=0\quad \text{on}\quad Q_{2R}\cap\{x_d=0\}.
\end{align*}
Then for any $0<r<R<\infty$, there exists a constant $N$ depending on $d$, $n$, $m$, $K$, $\delta$, $r$, $R$, and $a$ such that
\begin{equation*}
\|u\|_{\frac{a}{2m},a, Q_r^+}\le N\|u\|_{L_2(Q_R^+)},
\end{equation*}
where $0<a<2m$.
\end{lemma}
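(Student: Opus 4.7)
The strategy is a direct appeal to Remark 6 of \cite{DK10}. The core observation is that the special operator $\tilde{\l_0}$ falls within the class of higher-order parabolic systems with Dirichlet boundary data treated there, so the work reduces to verifying the hypotheses.

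\textbf{Step 1: Recast $\tilde{\l_0}$ as a non-divergence form operator.} Since $A^{\hat\alpha\hat\alpha}$ depends only on $t$, no derivative can act on the coefficient, and one has
$$
\tilde{\l_0} u = A^{\hat\alpha\hat\alpha}(t) D_d^{2m} u + \sum_{j=1}^{d-1} D_j^{2m} u,
$$
a non-divergence parabolic system of order $2m$ whose leading coefficients are measurable in $t$ only.

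\textbf{Step 2: Verify Legendre--Hadamard ellipticity \eqref{hadamard}.} For $\xi \in \bR^n$ and $\eta \in \bR^d$, the only nonzero top-order coefficients are $A^{\hat\alpha\hat\alpha}$ (attached to $\eta_d^{2m}$) and the identity matrices attached to $\eta_j^{2m}$ for $j = 1, \ldots, d-1$, so
$$
\sum_{|\alpha|=|\beta|=m} A^{\alpha\beta}_{ij}\xi_i\xi_j\,\eta^\alpha\eta^\beta = A^{\hat\alpha\hat\alpha}_{ij}\xi_i\xi_j \,\eta_d^{2m} + |\xi|^2 \sum_{j=1}^{d-1} \eta_j^{2m} \ge \min(\delta,1)\,|\xi|^2 \sum_{j=1}^{d} \eta_j^{2m},
$$
by the strong ellipticity of $A^{\hat\alpha\hat\alpha}$. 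Since $\sum_{j=1}^d \eta_j^{2m} \ge d^{-m}|\eta|^{2m}$, condition \eqref{hadamard} holds with an effective constant $d^{-m}\min(\delta,1)$.

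\textbf{Step 3: Apply the reference.} The assumed boundary conditions $u = D_d u = \cdots = D_d^{m-1} u = 0$ on $Q_{2R} \cap \{x_d=0\}$ are precisely the homogeneous Dirichlet data for a $2m$-th order system. Thus all hypotheses of Remark 6 of \cite{DK10} are in place, and that result immediately yields, for any $0 < r < R$ and any $a \in (0, 2m)$,
$$
\|u\|_{\frac{a}{2m}, a, Q_r^+} \le N \|u\|_{L_2(Q_R^+)},
$$
with $N = N(d,n,m,K,\delta,r,R,a)$, which is the claim.

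\textbf{Main obstacle.} The deep analytic content — boundary Hölder regularity of arbitrary order $a<2m$ for higher-order parabolic systems with Dirichlet data and merely $t$-measurable leading coefficients — is already packaged in \cite{DK10}; no fresh analytic work is needed. The only conceptual point is the verification of Legendre--Hadamard ellipticity for the block-diagonal operator $\tilde{\l_0}$, which rests on the elementary inequality $\sum_j \eta_j^{2m} \gtrsim |\eta|^{2m}$ together with the strong ellipticity assumed in \eqref{strong}.
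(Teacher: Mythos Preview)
Your proposal is correct and matches the paper's approach exactly: the paper does not give a proof but simply states this lemma as a conclusion from Remark~6 of \cite{DK10}, noting only that $\tilde{\l_0}$ satisfies the Legendre--Hadamard condition \eqref{hadamard}. You have filled in precisely the verification the paper leaves implicit.
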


Next we prove the following lemma.

\begin{lemma}\label{homo mean}
Let $\lambda\ge 0$.  Assume that $u\in C_{loc}^\infty(\overline{{\o_\infty^+}})$ satisfies
\begin{equation}
u_t+(-1)^m\tilde{\l_0} u+\lambda u=0\label{lambda equation}
\end{equation}
in $Q_2^+$ with the conormal derivative boundary condition on $\{x_d=0\}\cap Q_2$. Then there exists $N=N(d,m,n,\delta,K)$ such that
\begin{equation*}
[D_d^m u]_{\frac{1}{2m},1, Q_1^+}\le N\|D_d^m u\|_{L_2(Q_2^+)}.
\end{equation*}
\end{lemma}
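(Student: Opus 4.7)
The plan is to reduce the estimate to a setting where Lemma \ref{rk dz} applies directly, namely the Dirichlet problem for the homogeneous operator $\tilde{\l_0}$. Set $w = D_d^m u$. Since the coefficients of $\tilde{\l_0}$ depend only on $t$, differentiating \eqref{lambda equation} $m$ times in $x_d$ yields $w_t + (-1)^m \tilde{\l_0} w + \lambda w = 0$ in $Q_2^+$. By Lemma \ref{boundary condition}, the conormal boundary condition forces $D_d^j u = 0$ on $\{x_d = 0\} \cap Q_2$ for $j = m, \ldots, 2m-1$, which translates precisely into the Dirichlet conditions $w = D_d w = \cdots = D_d^{m-1} w = 0$ on $\{x_d = 0\} \cap Q_2$.

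The main obstacle is the $\lambda w$ term, since Lemma \ref{rk dz} handles only the homogeneous equation without a zeroth-order term. I would address this by a dimensional extension. Set $\mu = \lambda^{1/(2m)}$ and introduce an auxiliary tangential variable $y \in \bR$ by defining $\tilde w(t, x, y) = w(t, x) \cos(\mu y)$. Since $(-1)^m D_y^{2m} \tilde w = \mu^{2m} \tilde w = \lambda \tilde w$, the $\lambda$ term is absorbed and $\tilde w$ satisfies the homogeneous equation
\begin{equation*}
\tilde w_t + (-1)^m \bigl[ D^{\hat\alpha}(A^{\hat\alpha\hat\alpha}(t) D^{\hat\alpha}) + \sum_{j=1}^{d-1} D_j^{2m} + D_y^{2m} \bigr] \tilde w = 0
\end{equation*}
in the $(d+1)$-dimensional half space $\{x_d > 0\}$. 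Treating $y$ as an extra tangential direction, this is exactly the operator $\tilde{\l_0}$ written in $d+1$ dimensions, and since $\cos(\mu y)$ does not depend on $x_d$, the Dirichlet boundary conditions $\tilde w = D_d \tilde w = \cdots = D_d^{m-1} \tilde w = 0$ on $\{x_d = 0\}$ persist for $\tilde w$.

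Now I would apply Lemma \ref{rk dz} in $d+1$ dimensions with $a = 1$, $r = 1$, and $R = 2$ to obtain
\begin{equation*}
[\tilde w]_{\frac{1}{2m}, 1, Q_1^{+, d+1}} \le N \|\tilde w\|_{L_2(Q_2^{+, d+1})},
\end{equation*}
where $N$ depends only on $d, n, m, \delta, K$, and $Q_r^{+, d+1}$ denotes the analogous parabolic half-cylinder in $d+1$ dimensions. Since $Q_1^+ \times \{0\} \subset Q_1^{+, d+1}$ and $\tilde w(t, x, 0) = w(t, x)$, restricting to the slice $y = 0$ gives $[w]_{\frac{1}{2m}, 1, Q_1^+} \le [\tilde w]_{\frac{1}{2m}, 1, Q_1^{+, d+1}}$. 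On the other hand, the inclusion $Q_2^{+, d+1} \subset (-2^{2m}, 0) \times B_2^+ \times (-2, 2)$ together with $|\cos(\mu y)| \le 1$ yields $\|\tilde w\|_{L_2(Q_2^{+, d+1})} \le 2 \|w\|_{L_2(Q_2^+)}$. Chaining these two inequalities completes the proof, and the constant is uniform in $\lambda$ because the $\lambda$-dependence enters only through the bounded factor $\cos(\mu y)$.
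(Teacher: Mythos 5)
Your proof is correct and follows essentially the same route as the paper: for $\lambda=0$ both you and the authors differentiate $m$ times in $x_d$, invoke Lemma \ref{boundary condition} to obtain Dirichlet data for $D_d^m u$, and apply Lemma \ref{rk dz}; for $\lambda>0$ both use Agmon's dimensional-extension trick, which the paper delegates to the computation in Corollary \ref{mean basic}. The only (harmless) cosmetic differences are that you apply the Agmon extension to $w=D_d^m u$ directly rather than to $u$ and use $\zeta=\cos(\mu y)$ rather than $\cos+\sin$ -- this works here because the target estimate involves only $D_d^m u$ and not $\lambda^{1/2}u$, so $\zeta(0)=1$ suffices and a nonvanishing $D_y^m\zeta(0)$ is not needed; one should also note that Lemma \ref{boundary condition} is stated with $\lambda=0$, but the $\lambda u$ term contributes no boundary terms in the weak formulation, so its conclusion extends verbatim.
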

\begin{proof}
For the case $\lambda=0$, as noted in Lemma \ref{boundary condition}, the conormal boundary condition for \eqref{specialest} is given by
\begin{equation*}
D_d^mu=\cdots=D_d^{2m-1}u=0\quad \text{on}\quad \{x_d=0\}.
\end{equation*}
We differentiate \eqref{specialest} $m$ times with respect to $x_d$ and let $v=D_d^m u$. Then we arrive at
\begin{align*}
&v_t+(-1)^m \tilde{\l_0} v=0\quad \text{in}\,\,Q_2^+,\\
&v=D_d v=\cdots=D_d^{m-1}v=0\quad \text{on}\,\,\{x_d=0\}\cap Q_2.
\end{align*}
By Lemma \ref{rk dz} with $a=1$,
\begin{equation*}
[v]_{\frac{1}{2m},1,{Q}_1^+}\le N\|v\|_{L_2({Q}_{2}^+)}
\end{equation*}
Since $v=D_d^m u$, we prove the case when $\lambda=0$. For the case when $\lambda>0$,  we apply an idea of S. Agmon, the details of which can be found in Corollary \ref{mean basic}.
\end{proof}

\begin{lemma}\label{mean special1}
Let $r\in (0,\infty)$, $\kappa \in [32,\infty)$, $\lambda>0$, and $X_0=(t_0,x_0)\in \overline{\o_\infty^+}$. Assume that $u \in C_{loc}^\infty(\overline{\mathbb{R}^{d+1}_+})$ satisfies \eqref{lambda equation}
in $Q_{\kappa r}^+(X_0)$ with the conormal derivative boundary condition on $\{x_d=0\}\cap Q_{\kappa r}(X_0)$. Then we have
\begin{equation}
(|D_d^m u-(D_d^m u)_{Q_r^+(X_0)}|)_{Q_r^+(X_0)}\le N\kappa^{-1}(|D_d^m u|^2)^{\frac{1}{2}}_{Q^+_{\kappa r}(X_0)}\label{mean 2},
\end{equation}
where $N=N(d,n,m,\delta,K)$.
\end{lemma}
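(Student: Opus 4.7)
The plan is to reduce \eqref{mean 2} to the H\"older estimate of Lemma \ref{homo mean} by a parabolic rescaling, first at a point on the flat boundary and then at a general $X_0$ by a geometric comparison.

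\textbf{Boundary base case.} Suppose first that $X_0=(t_0,x_0',0)$ lies on $\{x_d=0\}$. Set $R=\kappa r/4$ (so $R\ge 8r$) and consider the rescaled function $\tilde u(t,x)=u(t_0+R^{2m}t,x_0+Rx)$. It satisfies an equation of the same form as \eqref{lambda equation} (with rescaled coefficient $A^{\hat{\alpha}\hat{\alpha}}(t_0+R^{2m}t)$ and rescaled parameter $R^{2m}\lambda\ge 0$) in $Q_2^+$, together with the conormal derivative condition on $Q_2\cap\{x_d=0\}$. Applying Lemma \ref{homo mean} to $\tilde u$ and undoing the scaling gives
\[
R^{m+1}[D_d^m u]_{\frac{1}{2m},1,Q_R^+(X_0)}\le N R^{-d/2}\|D_d^m u\|_{L_2(Q_{2R}^+(X_0))}.
\]
Since the parabolic diameter of $Q_r^+(X_0)$ is of order $r$,
\[
(|D_d^m u-(D_d^m u)_{Q_r^+(X_0)}|)_{Q_r^+(X_0)}\le N r\,[D_d^m u]_{\frac{1}{2m},1,Q_r^+(X_0)}.
\]
Combining these with $Q_r^+(X_0)\subset Q_R^+(X_0)$ and $Q_{2R}^+(X_0)\subset Q_{\kappa r}^+(X_0)$, then converting the $L_2$ norm over $Q_{\kappa r}^+(X_0)$ into an $L_2$ average (picking up $(\kappa r)^{(2m+d)/2}$), the powers of $\kappa r$ cancel exactly and leave the asserted $\kappa^{-1}$.

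\textbf{General $X_0$.} For $X_0=(t_0,x_0',x_{0,d})\in\overline{\o_\infty^+}$ with $x_{0,d}>0$, let $\bar X_0=(t_0,x_0',0)$ be its projection onto the boundary. If $x_{0,d}\le \kappa r/16$, then $Q_r^+(X_0)\subset Q_{r'}^+(\bar X_0)$ for some $r'$ of order $r$, while $Q_{\kappa' r'}^+(\bar X_0)\subset Q_{\kappa r}^+(X_0)$ for some $\kappa'\ge 16$ of order $\kappa$. Applying the boundary base case at $\bar X_0$ with radius $r'$ and parameter $\kappa'$, then bounding the mean oscillation on $Q_r^+(X_0)$ by a constant multiple of that on $Q_{r'}^+(\bar X_0)$ (via the trivial inequality $(|f-(f)_D|)_D\le 2(|f-c|)_D$ for any constant $c$), yields \eqref{mean 2} with an admissible constant. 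If $x_{0,d}>\kappa r/16$, then $Q_{\kappa r/16}(X_0)$ is strictly interior, the conormal condition is irrelevant there, and I replace Lemma \ref{homo mean} by its interior analogue---obtained by repeating that proof with Lemma \ref{rk dz} superseded by the corresponding whole-space interior H\"older estimate for homogeneous systems with coefficients depending only on $t$---before running the same scaling argument.

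The main difficulty is purely the bookkeeping: verifying that the powers of $r$, $R=\kappa r/4$, and the volume factor $(\kappa r)^{2m+d}$ cancel to leave precisely $\kappa^{-1}$ with a constant independent of $\lambda$. That $\lambda$-independence is essential, because the rescaling inflates $\lambda$ to $R^{2m}\lambda$; it is supplied by Lemma \ref{homo mean}, whose constant does not depend on $\lambda\ge 0$ thanks to the Agmon-type device invoked in its proof.
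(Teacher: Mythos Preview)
Your boundary base case and your interior case ($x_{0,d}>\kappa r/16$) are fine, and the overall strategy matches the paper's. The gap is in your near-boundary case $0<x_{0,d}\le \kappa r/16$: the geometric claim that you can pick $r'$ \emph{of order $r$} and simultaneously $\kappa'$ \emph{of order $\kappa$} with $Q_r^+(X_0)\subset Q_{r'}^+(\bar X_0)$ and $Q_{\kappa' r'}^+(\bar X_0)\subset Q_{\kappa r}^+(X_0)$ is false. The first containment forces $r'\ge r+x_{0,d}$, and since $x_{0,d}$ can be as large as $\kappa r/16$, this makes $r'$ of order $\kappa r$, not $r$. In that regime the second containment then forces $\kappa'$ to be bounded (of order $1$), so applying the boundary base case at $(\bar X_0,r',\kappa')$ yields only $(\kappa')^{-1}\sim 1$ on the right, and the volume ratio $|Q_{r'}^+(\bar X_0)|/|Q_r^+(X_0)|\sim (r'/r)^{2m+d}\sim\kappa^{2m+d}$ blows up as well. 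You do not recover the factor $\kappa^{-1}$.

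The fix is simple and is what the paper does: in the near-boundary case, do not invoke the packaged mean-oscillation estimate at a small radius $r'$; instead use directly the \emph{H\"older} bound you already derived inside the boundary base case,
\[
[D_d^m u]_{\frac{1}{2m},1,Q_\rho^+(\bar X_0)}\le N\rho^{-1}\big(|D_d^m u|^2\big)^{1/2}_{Q_{2\rho}^+(\bar X_0)},
\]
applied at $\bar X_0$ with $\rho$ of order $\kappa r$ (say $\rho=\kappa r/4$). Then $Q_r^+(X_0)\subset Q_\rho^+(\bar X_0)$ because $r+x_{0,d}\le r+\kappa r/16\le \rho$, and $Q_{2\rho}^+(\bar X_0)\subset Q_{\kappa r}^+(X_0)$; the mean oscillation on $Q_r^+(X_0)$ is $\le Nr\,[D_d^m u]_{\frac{1}{2m},1,Q_\rho^+(\bar X_0)}\le N(r/\rho)(|D_d^m u|^2)^{1/2}_{Q_{\kappa r}^+(X_0)}\sim N\kappa^{-1}(\cdots)$. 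Equivalently, one may first rescale so that $\kappa r$ is fixed (the paper takes $\kappa r=8$) and then split on $x_{0,d}<1$ versus $x_{0,d}\ge 1$; the point is that in both cases the H\"older estimate is applied on a region of size $\sim\kappa r$, not $\sim r$.
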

\begin{proof}
By scaling, it suffices to prove the inequality for $r=8/\kappa$.
We consider the following two cases.

Case 1: the last coordinate of $x_0\in[0,1)$. In this case, by denoting $Y_0=(t_0,x_0^\prime,0)$, we have
\begin{equation*}
Q_r(X_0)\subset Q_2(Y_0)\subset Q_4(Y_0)\subset Q_{\kappa r}(X_0).
\end{equation*}
After applying Lemma \ref{homo mean} to $u$ with a translation of the coordinates, we obtain
\begin{align*}
(|D_d^mu-(D_d^mu)_{Q_r^+(X_0)}|)_{Q_r^+(X_0)}\le N r[D_d^mu]_{\frac{1}{2m}, 1, Q_2^+(Y_0)}\\
\le N \kappa^{-1} (|D_d^m u|^2)_{Q_4^+(Y_0)}^{\frac{1}{2}}\le N\kappa^{-1}(|D^m_d u|^2)_{Q_{\kappa r}^+(X_0)}^{\frac{1}{2}}.
\end{align*}

Case 2: the last coordinate of $x_0\ge 1$.  This case is indeed an interior case. From Lemmas 2 and 3 in \cite{DK10}, we can show
\begin{equation*}
[v]_{\frac{1}{2m}, 1, Q_1}\le N\|v\|_{L_2(Q_4)},
\end{equation*}
 where $v$ smooth is a solution of
 \begin{equation*}
 v_t+(-1)^m \tilde{\l}_0 v+\lambda v=0\quad \text{in}\quad Q_4.
 \end{equation*}
Taking $v=D_d^m u$, we get
\begin{align*}
(|D_d^mu-(D_d^mu)_{Q_r(X_0)}|)_{Q_r(X_0)}\le N r[D_d^mu]_{\frac{1}{2m}, 1, Q_{1/4}(X_0)}\\
\le Nr\|D_d^mu\|_{L_2(Q_{1}(X_0))}
\le N\kappa^{-1}(|D^m_d u|^2)_{Q_{\kappa r}^+(X_0)}^{\frac{1}{2}}.
\end{align*}
Hence we prove the lemma.
\end{proof}

Now we are ready to establish a mean oscillation  estimate of $D_d^m u$ for systems with special coefficients in the half space.
\begin{theorem}\label{mean half space}
Let $r\in (0,\infty)$, $\kappa \in [64,\infty)$, $\lambda>0$, $X_0=(t_0,x_0)\in \overline{\mathbb{R}^{d+1}_+}$, and $f_\alpha \in L_{2, loc}(\mathbb{R}^{d+1}_+)$, where $|\alpha|\le m$. Assume that $u \in C_{loc}^{\infty}(\overline{\mathbb{R}^{d+1}_+})$ satisfies
\begin{equation*}
u_t+(-1)^m\tilde{{\l_0}} u+\lambda u=\sum_{|\alpha|\le m}D^\alpha f_\alpha
\end{equation*}
in $Q_{\kappa r}^+(X_0)$ with the conormal derivative boundary condition on $\{x_d=0\}\cap Q_{\kappa r}(X_0)$. Then we have
\begin{align}
&(|D_d^m u-(D_d^m u)_{Q_r^+(X_0)}|)_{Q_r^+(X_0)}\nonumber\\
&\le N\kappa^{-1}(|D_d^m u|^2)^{\frac{1}{2}}_{Q_{\kappa r}^+(X_0)}+N\kappa^{m+\frac{d}{2}}\sum_{|\alpha|\le m}\lambda^{\frac{|\alpha|}{2m}-\frac{1}{2}}(|f_\alpha|^2)^{\frac{1}{2}}_{Q^+_{\kappa r}(X_0)}\label{mean oscillation special1},
\end{align}
where $N=N(d,m,n,\delta,K)$.
\end{theorem}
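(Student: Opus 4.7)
The plan is the classical decomposition argument: write $u = v + w$, where $w$ absorbs the inhomogeneity on the whole half space and $v$ solves a homogeneous system on $Q_{\kappa r}^+(X_0)$, so that the oscillation of $D_d^m u$ is controlled by Lemma \ref{mean special1} applied to $v$ plus the $L_2$ estimate from Lemma \ref{L2 special} applied to $w$.

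First I would extend each $f_\alpha$ by zero outside $Q_{\kappa r}^+(X_0)$ and, by the solvability assertion in Lemma \ref{L2 special}, solve
\begin{equation*}
w_t + (-1)^m \tilde{\l_0} w + \lambda w = \sum_{|\alpha|\le m}D^\alpha\bigl(f_\alpha \chi_{Q_{\kappa r}^+(X_0)}\bigr) \quad \text{in } \o_\infty^+
\end{equation*}
with conormal boundary condition on $\{x_d=0\}$ and $w\in \h_2^m(\o_\infty^+)$. Setting $v := u - w$, the function $v$ lies in $\h_2^m(Q_{\kappa r}^+(X_0))$ and solves the homogeneous equation \eqref{lambda equation} in $Q_{\kappa r}^+(X_0)$ with conormal boundary condition on $\{x_d=0\}\cap Q_{\kappa r}(X_0)$. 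A local regularity argument (coefficients depend only on $t$, the equation is homogeneous, and one may apply Corollary \ref{L_2 high order} together with interior parabolic regularity in $t$) shows $v \in C_{\mathrm{loc}}^\infty(\overline{\o_\infty^+} \cap Q_{\kappa r}(X_0))$, which is the hypothesis needed to invoke Lemma \ref{mean special1}.

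Next, since $\kappa \ge 64 \ge 2\cdot 32$, I would apply Lemma \ref{mean special1} to $v$ on $Q_r^+(X_0) \subset Q_{\kappa r}^+(X_0)$ to get
\begin{equation*}
\bigl(|D_d^m v - (D_d^m v)_{Q_r^+(X_0)}|\bigr)_{Q_r^+(X_0)} \le N\kappa^{-1}\bigl(|D_d^m v|^2\bigr)^{1/2}_{Q_{\kappa r}^+(X_0)},
\end{equation*}
and then split $D_d^m v = D_d^m u - D_d^m w$ inside the right-hand side by the triangle inequality. For the contribution of $w$, Lemma \ref{L2 special} with $|\alpha|=m$ yields
\begin{equation*}
\lambda^{1/2}\|D^m w\|_{L_2(\o_\infty^+)} \le N\sum_{|\alpha|\le m}\lambda^{|\alpha|/2m}\|f_\alpha\chi_{Q_{\kappa r}^+(X_0)}\|_{L_2(\o_\infty^+)},
\end{equation*}
and bounding each $\|f_\alpha\chi_{Q_{\kappa r}^+(X_0)}\|_{L_2}$ by $|Q_{\kappa r}^+(X_0)|^{1/2}(|f_\alpha|^2)^{1/2}_{Q_{\kappa r}^+(X_0)}$ gives a clean control of $\|D^m w\|_{L_2(\o_\infty^+)}$ in terms of the averages appearing in the claim.

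Finally I would assemble the estimate by the triangle inequality
\begin{equation*}
\bigl(|D_d^m u - (D_d^m u)_{Q_r^+(X_0)}|\bigr)_{Q_r^+(X_0)} \le \bigl(|D_d^m v - (D_d^m v)_{Q_r^+(X_0)}|\bigr)_{Q_r^+(X_0)} + 2\bigl(|D_d^m w|^2\bigr)^{1/2}_{Q_r^+(X_0)},
\end{equation*}
converting the global $L_2$ bound on $w$ into the averages over $Q_r^+(X_0)$ and $Q_{\kappa r}^+(X_0)$ using $|Q_{\kappa r}^+(X_0)|/|Q_r^+(X_0)| = \kappa^{2m+d}$, which yields the prefactor $\kappa^{m+d/2}$ in front of the $(|f_\alpha|^2)^{1/2}$ terms. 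The term $N\kappa^{-1}(|D_d^m w|^2)^{1/2}_{Q_{\kappa r}^+(X_0)}$ coming from the $v$ bound is of the same form but with no $\kappa$ gain, so it is absorbed into the $\kappa^{m+d/2}$ term. The main obstacle I anticipate is ensuring the regularity of $v$ needed to apply Lemma \ref{mean special1}; this is where I would invoke Corollary \ref{L_2 high order} and the fact that $t$-regularity of solutions of \eqref{lambda equation} can be recovered via the equation itself by expressing $\partial_t v$ in terms of spatial derivatives.
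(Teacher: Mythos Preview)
Your decomposition $u=v+w$, the application of Lemma \ref{mean special1} to $v$, the $L_2$ bound on $w$ from Lemma \ref{L2 special}, and the final assembly via the triangle inequality are exactly the paper's argument (the paper uses a smooth cutoff $\zeta$ supported in $Q_{\kappa r}$ and equal to $1$ on $Q_{\kappa r/2}$ rather than a sharp characteristic function, and accordingly applies Lemma \ref{mean special1} on $Q_{\kappa r/2}^+(X_0)$ with $\kappa/2\ge 32$, but this is cosmetic).

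The one point where your write-up differs substantively is the regularity of $v$, which you correctly flag as the obstacle. Your proposed route---Corollary \ref{L_2 high order} for spatial derivatives plus recovering $t$-regularity from the equation---does not yield $v\in C^\infty_{\mathrm{loc}}$, because $A^{\hat\alpha\hat\alpha}(t)$ is merely measurable in $t$; differentiating the equation in $t$ would produce $\partial_t A^{\hat\alpha\hat\alpha}$, which need not exist. The paper sidesteps this by a mollification argument: one mollifies both $f_\alpha$ and $A^{\hat\alpha\hat\alpha}$ in $t$, obtains smooth approximate solutions for which $w$ (and hence $v$) is genuinely smooth, proves \eqref{mean oscillation special1} for the approximations, and passes to the limit using the stability in $\h_2^m$ furnished by Lemma \ref{L2 special}. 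This is simpler and avoids the bootstrap you sketch.
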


\begin{proof}
Choose two smooth functions $\zeta$ and $\zeta_1$ defined on $\bR^{d+1}$ such that
\begin{align*}
\zeta&=1\quad \text{on} \quad Q_{\kappa r/2}(X_0), \\
\zeta&=0 \quad \text{outside}\quad (t_0-(\kappa r)^{2m},t_0+(\kappa r)^{2m})\times B_{\kappa r}(x_0),
\end{align*}
and
\begin{align*}
\zeta_1&=1\quad \text{on} \quad Q_{\kappa r}(X_0), \\
\zeta_1&=0 \quad \text{outside}\quad (t_0-(2\kappa r)^{2m},t_0+(2\kappa r)^{2m})\times B_{2\kappa r}(x_0).
\end{align*}
Since we only concern the values of $u$ and $f_\alpha$ in $Q_{\kappa r}^+(X_0)$, let us consider $\tilde{u}=\zeta_1 u$. By a simple calculation, we can show that $\tilde{u}$ satisfies the following equation in $\o_\infty^+$
\begin{equation*}
\tilde{u}_t+(-1)^m\tilde{\l_0} \tilde{u}+\lambda \tilde{u}=\sum_{|\alpha|\le m}D^\alpha F_\alpha
\end{equation*}
with the conormal derivative boundary condition on $\{x_d=0\}$, where $F_\alpha$ is the linear combination of terms like
$$
u{\zeta_1}_t,\quad D^k\zeta_1 f_\alpha,\quad
A^{\alpha\beta}D^m uD^k \zeta_1,\quad
A^{\alpha\beta}D^{m-k}u D^{k}\zeta_1,\,\, k\ge 1.
$$
Since $f_\alpha\in L_{2, loc}(\o_\infty^+)$, $u$ is smooth, and $\zeta$ has compact support, each term above is in $L_2$, {which implies that} $F_\alpha\in L_2(\o_\infty^+)$. Because $f_\alpha=F_\alpha$ in $Q_{\kappa r}^+(X_0)$, without loss of generality we can assume $f_\alpha\in L_2(\o_\infty^+)$.

By Lemma \ref{L2 special}, for any $\lambda>0$, there exists a unique solution $w \in \h_2^m(\o_\infty^+)$ to the equation
\begin{equation*}
w_t+(-1)^m \tilde{\l_0} w+\lambda w=\sum_{|\alpha|\le m}D^\alpha (\zeta f_\alpha)
\end{equation*}
in $\o_\infty^+$ with the conormal derivative boundary condition on $\{x_d =0\}$. By a mollification argument, we may assume that $w$ is smooth. In fact, let $f_{\alpha\epsilon}$ be smooth functions which converge to $f_\alpha$ in $L_2$ and $A^{\alpha\hat{\alpha}}_\epsilon$ be the mollification of $A^{\alpha\hat{\alpha}}$ with respect to the $t$ variable, which converges to $A^{\hat{\alpha}\hat{\alpha}}$ almost everywhere. We denote the operator to be $\tilde{\l_0}_\epsilon$ and consider the following equation
\begin{equation*}
(u_{\epsilon})_t+(-1)^m \tilde{\l_0}_\epsilon u_\epsilon+\lambda u_\epsilon=\sum_{|\alpha|\le m}D^\alpha f_{\alpha\epsilon}\quad \text{in}\quad \o_\infty^+
\end{equation*}
with the conormal derivative boundary condition on $\{x_d=0\}$.  {By Lemma \ref{L2 special},}  $u_\epsilon$ converges to $u$ in $\h_2^m(\o_\infty^+)$. If \eqref{mean oscillation special1} holds for $u_\epsilon$, we can pass to the limit. Hence we may assume $f_\alpha$ and $A^{\hat{\alpha}\hat{\alpha}}$ are smooth functions, which implies that $w$ is smooth. Let $v:=u-w$. Then the function $v$ is smooth as well and satisfies
\begin{equation*}
v_t+(-1)^m\tilde{\l_0} v+\lambda v=0 \quad \text{in}\quad Q^+_{\kappa r/2}(X_0)
\end{equation*}
with the conormal derivative boundary condition on $\{x_d=0\}\cap Q_{\kappa r/2}(X_0)$. By applying Lemma \ref{mean special1} (note that $\kappa/2\ge 32$) to $v$, we have
\begin{align}
(|D_d^m v-(D_d^m v)_{Q^+_r(X_0)}|)_{Q^+_r(X_0)}
\le N \kappa^{-1}(|D_d^m v|^2)^{\frac{1}{2}}_{Q^+_{\kappa r/2}(X_0)}\label{v estimate}.
\end{align}
 By Lemma \ref{L2 special} with $T=t_0$, we get
\begin{equation}
\sum_{|\alpha|\le m} \lambda^{1-\frac{|\alpha|}{2m}}\|D^\alpha w\|_{L_2(\o_{t_0}^+)}\le N\sum_{|\alpha|\le m}\lambda^{\frac{|\alpha|}{2m}}\|\zeta f_\alpha\|_{L_2(\o_{t_0}^+)}.\label{eq 4.18}
\end{equation}
In particular,
\begin{align}
&(|D^m w|^2)^{\frac{1}{2}}_{Q^+_r(X_0)}
\le N\kappa^{m+d/2}\sum_{|\alpha|\le m}\lambda^{\frac{|\alpha|}{2m}-\frac{1}{2}}(|f_\alpha|^2)_{Q^+_{\kappa r}(X_0)}^{\frac{1}{2}}\label{w estimate}.
\end{align}
Let us prove \eqref{mean oscillation special1} now. By the triangle inequality,
\begin{equation*}
(|D_d^m u-(D_d^m u)_{Q_r^+(X_0)}|)_{Q_r^+(X_0)}\le 2(|D^m_d u-c|)_{Q_r^+(X_0)}
\end{equation*}
for any constant $c$. By taking $c=(D^m_d v)_{Q_r^+(X_0)}$, we have
\begin{align*}
(|D^m_du-(D^m_d u)_{Q_r^+(X_0)}|)_{Q_r^+(X_0)}\le {2} (|D_d^m u-(D_d^m v)_{Q_r^+(X_0)}|)_{Q_r^+(X_0)}.
\end{align*}
By using the triangle inequality and the Cauchy--Schwarz inequality, the right-hand side of the inequality above can be bounded by
\begin{align*}
& N (|D^m_d v -(D^m_d v)_{Q_r^+(X_0)}|)_{Q^+_r(X_0)}+N (|D^m w|^2)^{\frac{1}{2}}_{Q_r^+(X_0)}.
\end{align*}
By using \eqref{v estimate} and \eqref{w estimate}, the quantity above is less than
\begin{equation}
N\kappa^{-1}(|D_d^m v|^2)^{\frac{1}{2}}_{Q^+_{\kappa r/2}(X_0)}+N \kappa^{m+d/2}\sum_{|\alpha|\le m}\lambda^{\frac{|\alpha|}{2m}-\frac{1}{2}}(|f_\alpha|^2)^{\frac{1}{2}}_{Q^+_{\kappa r}(X_0)}.\label{eq 4.20}
\end{equation}
Finally, from \eqref{eq 4.18} we know that
\begin{equation*}
(|D^m_d w|^2)^\.5_{Q_{\kappa r/2}^+(X_0)}\le N \sum_{|\alpha|\le m}\lambda^{\frac{|\alpha|}{2m}-\.5}(|f_\alpha|^2)^\.5_{Q_{\kappa r}^+(X_0)}.
\end{equation*}
Combining the inequality above and the fact that $v=u-w$, we see that the first term of \eqref{eq 4.20} is less than the right-hand side of \eqref{mean oscillation special1}.
\end{proof}

Now we are ready to prove an $L_p$ estimate of $D_d^m u$ for the system with special coefficients.
\begin{theorem}\label{L_p special}
Let $p\in[2,\infty)$,  $\lambda\ge 0$, $T\in(-\infty,\infty]$, and $f_\alpha \in L_p(\o_T^+)$ for $|\alpha|\le m$. Then for any $u\in \h_p^m(\o_T^+)$ satisfying
\begin{equation*}
u_t+(-1)^m \tilde{\l_0} u+\lambda u=\sum_{|\alpha|\le m}D^\alpha f_\alpha\quad \text{in} \,\,\o_T^+
\end{equation*}
with the conormal derivative boundary condition on $\{x_d=0\}$,
we have
\begin{equation*}
\lambda^{\.5}\|D_d^m u\|_{L_p(\o_T^+)}\le N\sum_{|\alpha|\le m}\lambda^{\frac{|\alpha|}{2m}}\|f_\alpha\|_{L_p(\o_T^+)},
\end{equation*}
where $N=N(d,m,n,p,\delta,K)$.
\end{theorem}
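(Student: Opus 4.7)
The plan is to follow the Krylov-style program outlined in the introduction: convert the mean oscillation estimate already established in Theorem~\ref{mean half space} into a pointwise bound on the sharp function $(D_d^m u)^\#$, then invoke the Fefferman--Stein and Hardy--Littlewood theorems stated in Section~2 to upgrade this to an $L_p$ estimate. The case $\lambda=0$ is vacuous (take $\lambda\searrow 0$ in the positive case, or note the inequality is trivially true), so I may assume $\lambda>0$.

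First I would reduce to the case that $u$ is smooth with $f_\alpha\in L_2\cap L_p$. This is achieved by the mollification procedure already used in the proof of Theorem~\ref{mean half space}: mollify $A^{\hat\alpha\hat\alpha}$ in $t$ and $f_\alpha$ in $(t,x)$, solve the approximating problems by Lemma~\ref{L2 special}, and pass to the limit in the final inequality. Having set this up, fix $\kappa\ge 64$ and let $X=(t,x)\in\overline{\o_T^+}$. For every parabolic cylinder $Q_r^+(X_0)\ni X$, Theorem~\ref{mean half space} gives
\begin{equation*}
\dashint_{Q_r^+(X_0)}|D_d^m u-(D_d^m u)_{Q_r^+(X_0)}|\,dy\,ds
\le N\kappa^{-1}\bigl(M|D_d^m u|^2(X)\bigr)^{1/2}+N\kappa^{m+d/2}\sum_{|\alpha|\le m}\lambda^{\frac{|\alpha|}{2m}-\frac12}\bigl(M|f_\alpha|^2(X)\bigr)^{1/2}.
\end{equation*}
Taking the supremum over all $Q_r^+(X_0)$ containing $X$ yields the pointwise bound
\begin{equation*}
(D_d^m u)^\#(X)\le N\kappa^{-1}\bigl(M|D_d^m u|^2(X)\bigr)^{1/2}+N\kappa^{m+d/2}\sum_{|\alpha|\le m}\lambda^{\frac{|\alpha|}{2m}-\frac12}\bigl(M|f_\alpha|^2(X)\bigr)^{1/2}.
\end{equation*}

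Next I apply the Fefferman--Stein inequality $\|g\|_{L_p}\le N\|g^\#\|_{L_p}$ to $g=D_d^m u$, and then the Hardy--Littlewood inequality $\|(M|h|^2)^{1/2}\|_{L_p}\le N\|h\|_{L_p}$, which is valid because $p\ge 2$ implies $p/2\ge 1$ (so $M$ is bounded on $L_{p/2}$; if $p=2$, the trivial bound $\|(M|h|^2)^{1/2}\|_{L_2}\le N\|h\|_{L_2}$ follows from the same theorem since $M$ is bounded on $L_q$ for $q>1$, and after mollification we already have $D_d^m u\in L_2$). This gives
\begin{equation*}
\|D_d^m u\|_{L_p(\o_T^+)}\le N\kappa^{-1}\|D_d^m u\|_{L_p(\o_T^+)}+N\kappa^{m+d/2}\sum_{|\alpha|\le m}\lambda^{\frac{|\alpha|}{2m}-\frac12}\|f_\alpha\|_{L_p(\o_T^+)}.
\end{equation*}
Choosing $\kappa=\kappa(d,n,m,p,\delta,K)$ sufficiently large so that $N\kappa^{-1}\le 1/2$, I absorb the first term on the right into the left and multiply through by $\lambda^{1/2}$ to obtain the desired estimate.

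The main subtlety I foresee lies in the finiteness of $\|D_d^m u\|_{L_p(\o_T^+)}$ needed to carry out the absorption argument, especially for $T<\infty$. This is not an issue a priori since $u\in\h_p^m(\o_T^+)$ by hypothesis, but when we mollify to justify the pointwise application of Theorem~\ref{mean half space} we must ensure the approximants remain in the same class; this is standard because mollification in $t$ preserves $L_p$ norms and the conormal boundary condition is preserved by the approximation scheme from Theorem~\ref{mean half space}. A second small point is that the maximal and sharp functions in Section~2 are defined over $\o_\infty^+$, so for $T<\infty$ I extend $D_d^m u$ and $f_\alpha$ by zero beyond time $T$; the mean oscillation inequality continues to hold (with possibly a different constant) on cylinders that straddle $t=T$ because the right-hand side only involves the $L_2$ averages of the data.
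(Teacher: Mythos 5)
Your strategy for $p\in(2,\infty)$, $T=\infty$ coincides with the paper's: apply Theorem~\ref{mean half space}, take suprema to obtain a pointwise bound on $(D_d^m u)^\#$, then use Fefferman--Stein and Hardy--Littlewood and absorb the $\kappa^{-1}$ term. That part is correct and essentially identical to the paper's argument.

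However, your treatment of $p=2$ is wrong, and this is a genuine gap. You claim that $\|(M|h|^2)^{1/2}\|_{L_2}\le N\|h\|_{L_2}$ ``follows from the same theorem since $M$ is bounded on $L_q$ for $q>1$,'' but this does not follow: $\|(M|h|^2)^{1/2}\|_{L_2}^2 = \|M(|h|^2)\|_{L_1}$, and the Hardy--Littlewood theorem is exactly \emph{not} valid on $L_1$. Boundedness of $M$ on $L_q$ for $q>1$ gives you $\|(M|h|^2)^{1/2}\|_{L_p}\le N\|h\|_{L_p}$ only for $p>2$ (so that $|h|^2\in L_{p/2}$ with $p/2>1$). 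The remark that ``after mollification we already have $D_d^m u\in L_2$'' does not rescue this: the issue is with the maximal functions of both $D_d^m u$ and $f_\alpha$, not with the a priori finiteness of the norm. The paper sidesteps this by handling $p=2$ separately, noting that it already follows from Lemma~\ref{L2 special} (the $L_2$ estimate for the special operator $\tilde{\l}_0$ directly gives $\lambda^{1/2}\|D^m u\|_{L_2}\le N\sum\lambda^{|\alpha|/2m}\|f_\alpha\|_{L_2}$). You should do the same rather than try to force the sharp-function machinery at the endpoint.

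Two smaller remarks. First, your suggestion for $T<\infty$ --- extending $D_d^m u$ and $f_\alpha$ by zero --- is not obviously sufficient, because Theorem~\ref{mean half space} requires the equation to hold on $Q_{\kappa r}^+(X_0)$, and for cylinders that reach past $t=T$ the extended $u$ does not solve the equation; the assertion that ``the mean oscillation inequality continues to hold'' for such cylinders is unjustified. The paper appeals to the standard reduction in~\cite{Kry07}, which is a different construction, and you should either reproduce that or cite it rather than assert the extension works. Second, the mollification reduction you describe is more elaborate than needed; the paper simply reduces by density to $u\in C_0^\infty(\overline{\o_\infty^+})$, which suffices since Theorem~\ref{mean half space} is already stated for smooth $u$ and $f_\alpha\in L_{2,\mathrm{loc}}$.
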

\begin{proof}
Due to a density argument, it suffices to assume $u\in C_0^\infty(\overline{\o_\infty^+})$.  First {we suppose that} $p\in(2,\infty)$ {and} $T=\infty$. Under these assumptions, from  Theorem \ref{mean half space}, we deduce that
\begin{align*}
&(D^m_d u)^{\#}(X_0)\\
&\le \kappa^{-1}(M(D_d^m u)^2(X_0))^{\frac{1}{2}}
+N \kappa^{m+\frac{d}{2}}\sum_{|\alpha|\le m}\lambda ^{\frac{|\alpha|}{2m}-\frac{1}{2}}(M(f_\alpha)^2(X_0))^{\frac{1}{2}}
\end{align*}
for any $\kappa\ge 64$ and $X_0\in {\overline{\o_\infty^+}}$. This, together with the Fefferman--Stein theorem {and} the Hardy--Littlewood maximal function theorem, yields
\begin{align*}
&\|D^m_d u\|_{L_p(\o^+_\infty)}\le N\|(D^m_du)^\#\|_{L_p(\o^+_\infty)}\\
&\le N\kappa^{-1}\|(M(D_d^m u)^2)^\.5\|_{L_p(\o^+_\infty)}+N\kappa^{m+\frac{d}{2}}\sum_{|\alpha|\le m}\lambda ^{\frac{|\alpha|}{2m}-\frac{1}{2}}\|(M(f_\alpha)^2)^\.5\|_{L_p(\o^+_\infty)}\\
&\le N\kappa^{-1}\|D_d^m u\|_{L_p(\o^+_\infty)}+N\kappa^{m+\frac{d}{2}}\sum_{|\alpha|\le m}\lambda ^{\frac{|\alpha|}{2m}-\frac{1}{2}}\|f_\alpha\|_{L_p(\o^+_\infty)}.
\end{align*}
Now we choose $\kappa$ sufficiently large such that the first term on the right-hand side of the inequality above is absorbed in the left-hand side. Then we obtain the desired estimate. For $T\in(-\infty,\infty)$, a standard argument in \cite{Kry07} can be applied.
The case $p=2$ follows from Lemma \ref{L2 special}.
\end{proof}

\section{Mean oscillation estimate for {$D_{x'}D^{m-1}u$}}
In this section, we obtain a mean oscillation estimate for $D_{x^\prime}D^{m-1}u$. The following lemma shows that $\|D_d^m u\|_{L_p}$ can be bounded by $\|D_{x^\prime}D^{m-1}u\|_{L_p}$ and $\|f_\alpha\|_{L_p}$ for  systems with simple coefficients.
\begin{lemma}\label{nor and tan}
Let $T\in (-\infty,\infty]$ and $p\in[2,\infty)$. Assume that $u\in \h_p^m(\o_T^+)$ and $$u_t+(-1)^m\l_0 u+\lambda u=\sum_{|\alpha|\le m}D^\alpha f_\alpha$$ with the conormal derivative boundary condition on $\{x_d=0\}$, where $\lambda\ge 0$ and $f_\alpha \in L_p(\o_\infty^+)$. Then there exists a constant $N$, depending only on $d,m,n,\delta, K$, and $p$, such that
\begin{equation*}
\lambda^{\frac{1}{2}}\|D^m u\|_{L_p(\o_T^+)}\le N\sum_{|\alpha|\le m}\lambda^{\frac{|\alpha|}{2m}}\|f_\alpha\|_{L_p(\o_T^+)}
+N\lambda^{\frac{1}{2}}\|D_{x^{\prime}}D^{m-1}u\|_{L_p(\o_T^+)}.
\end{equation*}
Especially, if $\lambda=0$ and $f_\alpha=0$ when $|\alpha|<m$, then
\begin{equation*}
\|D^m u\|_{L_p(\o_T^+)}\le N\|D_{x^\prime }D^{m-1}u\|_{L_p(\o_T^+)}+N\sum_{|\alpha|=m}\|f_\alpha\|_{L_p(\o_T^+)}.
\end{equation*}
\end{lemma}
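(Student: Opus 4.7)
The plan is to reduce, by a scaling in the tangential variables, to the $L_p$ estimate for the special operator $\tilde{\l_0}$ provided by Theorem \ref{L_p special}. For a parameter $\mu>1$ to be fixed later, set $u_\mu(t,x',x_d) := u(t,\mu x',x_d)$. A direct change of variables shows that $u_\mu \in \h_p^m(\o_T^+)$ satisfies, with the conormal boundary condition on $\{x_d=0\}$,
\[
\partial_t u_\mu + (-1)^m \sum_{|\alpha|=|\beta|=m} D^\alpha\bigl(\mu^{-|\alpha'|-|\beta'|}A^{\alpha\beta}(t)D^\beta u_\mu\bigr) + \lambda u_\mu = \sum_{|\alpha|\le m}\mu^{-|\alpha'|}D^\alpha f_\alpha^\mu,
\]
where $\alpha'=(\alpha_1,\ldots,\alpha_{d-1})$ and $f_\alpha^\mu(t,x',x_d):=f_\alpha(t,\mu x',x_d)$. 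Observe that the coefficient of $D_d^{2m}u_\mu$ remains $A^{\hat\alpha\hat\alpha}$, while every other coefficient carries at least one factor of $\mu^{-1}$.

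I next reinterpret this equation as a $\tilde{\l_0}$-equation by adding and subtracting the $\tilde{\l_0}$-pairing $\sum_{j<d} D_j^m u_\mu\, D_j^m\phi + A^{\hat\alpha\hat\alpha}D_d^m u_\mu\, D_d^m\phi$ in the weak formulation tested against an arbitrary $\phi\in C_0^\infty(\overline{\o_\infty^+})$. This shows that $u_\mu$ solves, in the conormal weak sense for $\tilde{\l_0}$,
\[
\partial_t u_\mu + (-1)^m \tilde{\l_0} u_\mu + \lambda u_\mu = \sum_{|\alpha|\le m} D^\alpha G_\alpha,
\]
where each $G_\alpha$ collects three contributions: (i) the rescaled data $\mu^{-|\alpha'|}f_\alpha^\mu$; (ii) for $\alpha=m e_j$ with $j<d$, the tangential correction $(-1)^m D_j^m u_\mu$ coming from the added identity tangential piece; and (iii) cross-terms $\pm\mu^{-|\alpha'|-|\beta'|}A^{\alpha\beta}D^\beta u_\mu$ for $(\alpha,\beta)\ne(\hat\alpha,\hat\alpha)$. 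The essential observation is that whenever $\beta\ne\hat\alpha$ the factor $D^\beta u_\mu$ is a component of $D_{x'}D^{m-1}u_\mu$, whereas the only cross-terms reproducing the unwanted $D^{\hat\alpha}u_\mu=D_d^m u_\mu$ occur when $\beta=\hat\alpha$ and $\alpha\ne\hat\alpha$, in which case $|\alpha'|\ge 1$ supplies a factor $\mu^{-|\alpha'|}\le\mu^{-1}$ that will drive the absorption.

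Applying Theorem \ref{L_p special} to $u_\mu$ and invoking the scaling identities $\|D^\gamma u_\mu\|_{L_p}=\mu^{|\gamma'|-(d-1)/p}\|D^\gamma u\|_{L_p}$ and $\|f_\alpha^\mu\|_{L_p}=\mu^{-(d-1)/p}\|f_\alpha\|_{L_p}$, the common factor $\mu^{-(d-1)/p}$ cancels and I obtain
\[
\lambda^{1/2}\|D_d^m u\|_{L_p}\le N\sum_{|\alpha|\le m}\lambda^{|\alpha|/(2m)}\|f_\alpha\|_{L_p}+N\mu^m\lambda^{1/2}\|D_{x'}D^{m-1}u\|_{L_p}+NK\mu^{-1}\lambda^{1/2}\|D_d^m u\|_{L_p},
\]
with $N$ depending only on $d,n,m,p,\delta,K$. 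Choosing $\mu$ so large that $NK\mu^{-1}\le 1/2$ absorbs the last term into the left-hand side; together with $\|D^m u\|_{L_p}\le\|D_d^m u\|_{L_p}+\|D_{x'}D^{m-1}u\|_{L_p}$ this yields the first inequality, the resulting constant absorbing the now-fixed value of $\mu^m$. The second estimate ($\lambda=0$, $f_\alpha=0$ for $|\alpha|<m$) then follows by moving $\lambda u$ into the RHS as the $\alpha=0$ term, applying the first estimate for an arbitrary $\lambda>0$, dividing by $\lambda^{1/2}$, and letting $\lambda\to 0^+$ so the spurious $\lambda^{1/2}\|u\|_{L_p}$ disappears. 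The main technical obstacle is the weak-form reinterpretation in the second step: one must verify that the $\mu^m$-growth attaches only to the acceptable norm $\|D_{x'}D^{m-1}u\|_{L_p}$, whereas all cross-terms that reproduce $D_d^m u$ on the right-hand side come with the small factor $\mu^{-1}$, so that the absorption closes the argument with a constant independent of $\lambda$.
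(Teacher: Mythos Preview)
Your proof is correct and follows the same overall strategy as the paper---reduce to Theorem~\ref{L_p special} for the special operator $\tilde{\l_0}$ via an anisotropic dilation and then absorb the unwanted $D_d^m u$ term---but you dilate in a different direction. The paper sets $v(t,x',x_d)=u(\mu^{-2m}t,x',\mu^{-1}x_d)$, i.e.\ it stretches in $t$ and in the \emph{normal} variable $x_d$; this makes every leading coefficient except $A^{\hat\alpha\hat\alpha}$ pick up a factor $\mu^{\alpha_d+\beta_d-2m}\le\mu^{-1}$, so \emph{all} cross-terms become uniformly small and one does not need to distinguish which of them reproduce $D_d^m u$. Your dilation $u_\mu(t,x',x_d)=u(t,\mu x',x_d)$ in the \emph{tangential} variables instead gives factors $\mu^{-|\alpha'|-|\beta'|}$; here the cross-terms with $\beta=\hat\alpha$ (which reproduce $D_d^m u$) carry only $\mu^{-|\alpha'|}\le\mu^{-1}$, while the added identity piece $\sum_{j<d}D_j^{2m}u_\mu$ blows up like $\mu^m$---but, as you correctly note, only on the acceptable norm $\|D_{x'}D^{m-1}u\|_{L_p}$. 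Both arguments close, and your version has the minor convenience that the time interval $(-\infty,T)$ is unchanged, whereas the paper must track $\o^+_{\mu^{2m}T}$. Your derivation of the $\lambda=0$ case by moving $\lambda u$ to the right-hand side and letting $\lambda\searrow0$ is exactly the device the paper uses as well.
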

\begin{proof}
The case $\lambda=0$ follows by letting $\lambda\searrow 0$ after the estimate with $\lambda >0$ is proved.

We use a scaling argument. Let $v(t,x^\prime, x_d)=u(\mu^{-2m}t, x^\prime, \mu^{-1} x_d)$ with a sufficiently large constant $\mu$ to be chosen later. Then $v$ satisfies in $\o_{\mu^{2m}T}^+$,
 \begin{align*}
&v_t+(-1)^m\sum_{|\alpha|=|\beta|=m}\mu^{\alpha_d+\beta_d-2m}D^\alpha (\tilde{A}^{{\alpha}{\beta}}D^\beta v){+\mu^{-2m}\lambda v}\\
&=\sum_{|\alpha|\le m}\mu^{\alpha_d-2m}D^\alpha \tilde{f}_\alpha
\end{align*}
with the conormal derivative boundary condition on $\{x_d=0\}$, where
$$
\tilde{A}^{\alpha\beta}(t)=A^{\alpha\beta}(\mu^{-2m}t),\quad
\tilde{f}_\alpha(t,x^\prime,x_d)=f_\alpha(\mu^{-2m}t, x^\prime, \mu^{-1}x_d).
$$
We leave the term $(-1)^mD^{\hat{\alpha}}(\tilde{A}^{\hat{\alpha}\hat{\alpha}}D^{\hat{\alpha}}v)$ on the left-hand side and move all the other spatial derivatives to the right-hand side and add $(-1)^m\cD_{d-1}^mv$ to both sides of the system,
\begin{equation*}
v_t+(-1)^m \big(D^{\hat{\alpha}} (A^{\hat\alpha\hat{\alpha}}D^{\hat{\alpha}} v)+\cD_{d-1}^mv\big)+\mu^{-2m}\lambda v=\sum_{|\alpha|\le m}D^\alpha \hat{f}_\alpha+(-1)^m\cD_{d-1}^mv,
\end{equation*}
where $\hat{f}_\alpha=\mu^{\alpha_d-2m}\tilde{f}_\alpha$ for $|\alpha|<m$,
$$\hat{f}_\alpha=\mu^{\alpha_d-2m}\tilde{f}_\alpha+ \sum_{{|\beta|=m}}(-1)^{m+1}\mu^{\alpha_d+\beta_d-2m} \tilde{A}^{\alpha\beta}(t)D^\beta v $$ {for} $|\alpha|=m$ {but} $\alpha\neq \hat{\alpha}$, and
$$\hat{f}_{\hat{\alpha}}=\mu^{-m}\tilde{f}_{\hat{\alpha}}+ \sum_{^{\beta\neq\hat{\alpha}}_{|\beta|=m}}(-1)^{m+1}\mu^{\beta_d-m} \tilde{A}^{\hat{\alpha}\beta}(t)D^\beta v.$$
Then we implement Theorem \ref{L_p special} to get
\begin{align*}
&\mu^{-m}\lambda^{\frac{1}{2}}\|D^m v\|_{L_p(\o_{\mu^{2m}T}^+)}\\
&\le N\sum_{|\alpha|\le m}({\mu^{{-2m}}\lambda})^{\frac{|\alpha|}{2m}}\|\hat{f}_\alpha\|_{L_p(\o_{\mu^{2m}T}^+)}+N\mu^{-m}\lambda^\.5\|D_{x^\prime}D^{m-1}v\|_{L_p(\o_{\mu^{2m}T}^+)}\nonumber\\
&\le N\sum_{|\alpha|\le m}(\mu^{{-2m}}\lambda)^{\frac{|\alpha|}{2m}}\mu^{\alpha_d-2m}\|\tilde{f}_\alpha\|_{L_p(\o_{\mu^{2m}T}^+)}\\
&\quad+N\mu^{-m}\lambda^{\.5}(\sum_{^{\alpha\neq\hat{\alpha}}_{|\alpha|=m}}\sum_{|\beta|=m}\mu^{\alpha_d+\beta_d-2m}\|D^\beta v\|_{L_p(\o_{\mu^{2m}T}^+)}\\
&\quad+\sum_{^{\beta\neq\hat{\alpha}}_{|\beta|=m}}\mu^{\beta_d-m}\|D^\beta v\|_{L_p(\o_{\mu^{2m}T}^+)})+N\mu^{-m}\lambda^\.5\|D_{x^\prime}D^{m-1}v\|_{L_p(\o_{\mu^{2m}T}^+)}.
\end{align*}
Let $\mu$ be sufficiently large such that
$$
N\sum_{^{\alpha\neq\hat{\alpha}}_{|\alpha|=m}}
\sum_{|\beta|=m}\mu^{\alpha_d+\beta_d-2m}+
\sum_{^{\beta\neq\hat{\alpha}}_{|\beta|=m}}\mu^{\beta_d-m}\le 1/2.
$$
Then we fix this $\mu$ and obtain
\begin{align*}
\mu^{-m}\lambda^{\frac{1}{2}}\|D^m v\|_{L_p(\o_{\mu^{2m}T}^+)}\le& N\sum_{|\alpha|\le m}(\mu^{-2m}\lambda)^{\frac{|\alpha|}{2m}}\mu^{\alpha_d-2m}\|\tilde{f}_\alpha\|_{L_p(\o_{\mu^{2m}T}^+)}\\
&+N\mu^{-m}\lambda^\.5\|D_{x^\prime}D^{m-1}v\|_{L_p(\o_{\mu^{2m}T}^+)}.
\end{align*}
After returning to $u$ and $f_\alpha$, we prove the lemma.
 \end{proof}
We localize Lemma \ref{nor and tan} to get Lemma \ref{5.2} following the proof of Lemma 1 in \cite{DK10}.
\begin{lemma}\label{5.2}
Let $0<r<R<\infty$ and $p\in[2,\infty)$. Assume $u\in \h_p^m(Q_R^+)$ and
\begin{equation*}
u_t+(-1)^m \l_0 u=0 \quad \text{in} \quad Q_R^+
\end{equation*}
with the conormal derivative boundary condition on $\{x_d=0\}\cap Q_R$. Then there exists a constant $N=N(d,m,n,\delta,r,R,p,K)$ such that
\begin{equation}
\|D^m u\|_{L_p(Q_r^+)}\le N(\|D_{x^\prime}D^{m-1} u \|_{L_p(Q_R^+)}+\|u\|_{L_p(Q_R^+)})\label{lp boot}.
\end{equation}
\end{lemma}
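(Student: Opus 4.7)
My plan is to localize Lemma \ref{nor and tan} via a cutoff argument. First, I would fix an intermediate radius $\rho\in(r,R)$ and pick a smooth cutoff $\eta\in C^\infty(\overline{\o_\infty^+})$ with $\eta\equiv 1$ on $Q_r^+$, $\eta\equiv 0$ outside $Q_\rho^+$, and $|\partial_t^j D^k\eta|\le N(\rho-r)^{-2mj-k}$. Setting $w=\eta u$ and extending by zero, a direct Leibniz computation shows that $w\in\h_p^m(\o_\infty^+)$ satisfies
\[
w_t+(-1)^m\l_0 w+w=\sum_{|\mu|\le m}D^\mu g_\mu\quad\text{in }\o_\infty^+
\]
with the conormal derivative boundary condition on $\{x_d=0\}$, where each $g_\mu$ is supported in $\overline{Q_\rho^+}$ and depends only on $\eta$, its derivatives, $A^{\alpha\beta}$, and derivatives of $u$ of order at most $m-1$; the term $+w$ coming from taking $\lambda=1$ is absorbed into $g_0$. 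Commutator terms in $\l_0 w-\eta\l_0 u$ produced by Leibniz that contain more than $m$ derivatives on $u$ are reduced by integration by parts, transferring the excess derivatives onto $\eta$. The conormal condition for $w$ is verified directly from the weak formulation: for any $\phi\in C_0^\infty(\overline{\o_\infty^+})$, the product $\eta\phi$ lies in $C_0^\infty(\overline{Q_R^+})$ and is a legitimate test function for the equation of $u$.

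Next I would apply the first assertion of Lemma \ref{nor and tan} with $T=\infty$ and $\lambda=1$ to $w$. Combined with the Leibniz expansion of $D_{x'}D^{m-1}(\eta u)$, this yields
\[
\|D^m u\|_{L_p(Q_r^+)}\le \|D^m w\|_{L_p(\o_\infty^+)}\le N_*\Big(\|D_{x'}D^{m-1}u\|_{L_p(Q_\rho^+)}+\sum_{|\beta|\le m-1}\|D^\beta u\|_{L_p(Q_\rho^+)}+\|u\|_{L_p(Q_\rho^+)}\Big),
\]
where $N_*$ depends on $(\rho-r)^{-m}$. To remove the intermediate-order terms I would invoke the standard interpolation inequality
\[
\sum_{|\beta|\le m-1}\|D^\beta u\|_{L_p(Q_\rho^+)}\le \varepsilon\|D^m u\|_{L_p(Q_\rho^+)}+N(\varepsilon)\|u\|_{L_p(Q_\rho^+)}
\]
and then apply the classical iteration lemma for real-valued functions (as in \cite{Gia93}) to a geometric sequence of radii between $r$ and $R$, with $\varepsilon$ chosen so that $N_*\varepsilon$ is a fixed fraction less than one. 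The standard absorption argument then delivers \eqref{lp boot}.

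The main technical obstacle is the bookkeeping required to place $\l_0 w-\eta\l_0 u$ into divergence form with each $g_\mu$ depending only on derivatives of $u$ of order at most $m-1$: every Leibniz-generated term that contains $m,m+1,\ldots,2m-1$ derivatives on $u$ must be rewritten by integrations by parts that shift derivatives onto $\eta$. This is possible essentially because $A^{\alpha\beta}$ depends only on $t$, so no spatial derivatives of $A^{\alpha\beta}$ obstruct the procedure, and the weak conormal condition is automatically preserved since the manipulations are done within the weak formulation. Once this rewriting is complete, the subsequent application of Lemma \ref{nor and tan} and the iteration step are routine.
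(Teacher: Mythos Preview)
Your overall plan—multiply by a cutoff $\eta$, apply the global estimate of Lemma~\ref{nor and tan} to $w=\eta u$, then interpolate and iterate—is exactly the localization argument the paper intends (it refers to Lemma~1 of \cite{DK10}). The gap is in your claim that the commutator can be rewritten so that every $g_\mu$ involves only $D^{\le m-1}u$. In the weak formulation one obtains
\[
-\langle w,\phi_t\rangle+\sum_{|\alpha|=|\beta|=m}\langle A^{\alpha\beta}D^\beta w,D^\alpha\phi\rangle
=\langle u\eta_t,\phi\rangle+I-II,
\]
where $I$ contributes terms $\langle g_\alpha,D^\alpha\phi\rangle$ with $|\alpha|=m$ and $g_\alpha$ built from $D^{\le m-1}u$, while $II=\sum_{0<\sigma\le\alpha}\binom{\alpha}{\sigma}\langle A^{\alpha\beta}D^\sigma\eta\,D^\beta u,D^{\alpha-\sigma}\phi\rangle$ contributes terms of order $|\alpha-\sigma|\le m-1$ on $\phi$ whose coefficient contains $D^\beta u$ with $|\beta|=m$. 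To move that $m$th derivative off $u$ you would have to integrate by parts in a direction contained in $\beta$; when $\beta=\hat\alpha=me_d$ this forces integration in $x_d$, which produces a nonzero surface term on $\{x_d=0\}$ (already in the case $m=1$ one is left with $-\int_{x_d=0}\big(\sum_i a^{id}D_i\eta\big)\,u\,\phi$). Such a surface integral cannot be absorbed into the bulk pairing $\sum(-1)^{|\mu|}\langle g_\mu,D^\mu\phi\rangle$, so $w$ would no longer satisfy the conormal problem in the form required by Lemma~\ref{nor and tan}. In other words, the reduction to $D^{\le m-1}u$ works in the interior but fails at the flat boundary.

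The fix is simple and keeps the rest of your argument intact: do \emph{not} try to lower the order of $u$ in $II$, and apply Lemma~\ref{nor and tan} with $\lambda$ large rather than $\lambda=1$. Writing $w_t+(-1)^m\l_0 w+\lambda w=\sum_{|\mu|\le m}D^\mu g_\mu$ with $g_0\ni\lambda\eta u$, the $D^m u$ contributions sit only in $g_\mu$ with $|\mu|\le m-1$ and satisfy $\|g_\mu\|_{L_p}\le N(\rho-r)^{-(m-|\mu|)}\|D^m u\|_{L_p(Q_\rho^+)}$. In Lemma~\ref{nor and tan} these are weighted by $\lambda^{|\mu|/(2m)-1/2}$; choosing $\lambda=C(\rho-r)^{-2m}$ with $C$ large makes the total coefficient of $\|D^m u\|_{L_p(Q_\rho^+)}$ at most $\tfrac12$, while the remaining terms are bounded by $N(\rho-r)^{-m}\big(\|D_{x'}D^{m-1}u\|_{L_p(Q_R^+)}+\sum_{k<m}\|D^k u\|_{L_p(Q_R^+)}\big)$. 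Now your interpolation and the iteration lemma of \cite{Gia93} go through exactly as you described and yield \eqref{lp boot}.
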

Next, we state a parabolic type Sobolev embedding theorem.
\begin{lemma}\label{sobolev}
Let $r\in(0,\infty)$ and $1\le q\le p<\infty$. Assume that
\begin{equation*}
\frac{1}{q}-\frac{1}{p}\le\frac{1}{d+2m}.
\end{equation*}
Let $\zeta\in C_0^\infty$ be such $\zeta=1$ in $Q_r^+$. Then for any function $u$ such that $u\zeta\in \h_q^m(\o_\infty^+)$, we have $D^{m-1}u\in L_p(Q_r^+)$ and
\begin{equation*}
\|D^{m-1}u\|_{L_p(Q_r^+)}\le N\|u\zeta\|_{\h_q^m(\o_\infty^+)},
\end{equation*}
where $N=N(d,r,p,q)$.
\end{lemma}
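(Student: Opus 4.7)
My plan is to reduce to a classical parabolic Sobolev embedding in the whole space $\o_\infty$. Since $\zeta$ has compact support, so does $u\zeta$, and $u\zeta \in \h_q^m(\o_\infty^+)$. I would first construct an extension $v$ of $u\zeta$ to all of $\o_\infty$ (still with compact support) satisfying
\begin{equation*}
\|v\|_{\h_q^m(\o_\infty)} \le N \|u\zeta\|_{\h_q^m(\o_\infty^+)}.
\end{equation*}
A higher-order Hestenes--Seeley reflection across $\{x_d=0\}$ handles the spatial derivatives, keeping $D^\alpha v \in L_q$ for $|\alpha|\le m$; for the temporal part, I would reflect each $f_\alpha$ appearing in a near-optimal representation $(u\zeta)_t = \sum_{|\alpha|\le m} D^\alpha f_\alpha$ by the same scheme, producing a representation of $v_t$ as $\sum_{|\alpha|\le m} D^\alpha \tilde f_\alpha$ on $\o_\infty$ with $\|\tilde f_\alpha\|_{L_q(\o_\infty)} \le N\|f_\alpha\|_{L_q(\o_\infty^+)}$, so that $v_t \in \bH_q^{-m}(\o_\infty)$ with controlled norm.

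Next I would prove the whole-space embedding
\begin{equation*}
\|D^{m-1} v\|_{L_p(\o_\infty)} \le N \|v\|_{\h_q^m(\o_\infty)} \quad \text{when} \quad \frac{1}{q} - \frac{1}{p} \le \frac{1}{d+2m}.
\end{equation*}
Set $F := v_t + (1-\Delta)^m v \in \bH_q^{-m}(\o_\infty)$, with $\|F\|_{\bH_q^{-m}} \le N\|v\|_{\h_q^m}$, and factor $F = (1-\Delta)^{m/2} G$ with $G\in L_q(\o_\infty)$. Then $v = \Gamma \ast F$ where $\Gamma$ is the fundamental solution of $\partial_t + (1-\Delta)^m$, and so $D^{m-1} v = K \ast G$ with kernel $K = D^{m-1}(1-\Delta)^{m/2} \Gamma$. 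Parabolic scaling $(t,x) \mapsto (\mu^{2m} t, \mu x)$ together with standard decay estimates for $\Gamma$ shows $K$ lies in weak-$L_r(\o_\infty)$ with $\frac{1}{r} = 1 - \frac{1}{d+2m}$, and a weak-type Young inequality (equivalently, the parabolic Hardy--Littlewood--Sobolev inequality) then yields the embedding.

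Restricting back to $Q_r^+ \subset \{\zeta = 1\}$, on which $u \equiv u\zeta \equiv v$, completes the argument. The main obstacle is the extension step: the distributional nature of $\bH_q^{-m}$ makes the reflection of $v_t$ subtle, since one must arrange a representation of $(u\zeta)_t$ whose pieces can be reflected compatibly with the reflection of $u\zeta$ itself, while absorbing the boundary terms produced by $D^\alpha$ of reflected functions. An alternative that bypasses extension is to argue directly on $\o_\infty^+$ via an anisotropic Littlewood--Paley decomposition adapted to the parabolic scaling, or to first prove the weaker statement $\|u\|_{L_{p_1}(Q_r^+)}\le N\|u\zeta\|_{\h_q^m(\o_\infty^+)}$ with $\frac{1}{q}-\frac{1}{p_1}\le \frac{m}{d+2m}$ by the spatial Sobolev inequality sliced in $t$ combined with a real-interpolation with $v_t\in \bH_q^{-m}$, and then iterate/derivate.
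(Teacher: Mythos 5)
The paper does not prove Lemma~\ref{sobolev}: it is introduced as ``a parabolic type Sobolev embedding theorem'' and used without justification, so there is no in-paper argument to compare yours against.

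Your whole-space reduction is a sound route. After writing $F=v_t+(1-\Delta)^m v$, factoring $F=(1-\Delta)^{m/2}G$ with $G\in L_q(\o_\infty)$ (which is legitimate because $v_t\in\bH_q^{-m}$ and $(1-\Delta)^{m/2}v\in L_q$), and representing $v=\Gamma\ast F$, the kernel $K=D^{m-1}(1-\Delta)^{m/2}\Gamma$ decays at parabolic order $-(d+2m-1)$, hence lies in weak $L_r(\o_\infty)$ with $1/r=1-1/(d+2m)$, and the parabolic Hardy--Littlewood--Sobolev inequality (or a Hedberg-style maximal-function argument if one wants strong type at the endpoint) gives $\|D^{m-1}v\|_{L_p}\le N\|G\|_{L_q}\le N\|v\|_{\h_q^m(\o_\infty)}$. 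The extension step is the genuine gap, and you have correctly flagged it. Reflecting a representation $(u\zeta)_t=\sum_{|\alpha|\le m}D^\alpha f_\alpha$ term by term does not work: a fixed Hestenes operator $E$ commutes with $\partial_t$ (acting slicewise in $x_d$), but not with $D_d$, and naively reflecting each $f_\alpha$ produces piecewise-defined functions whose distributional $D_d^{\alpha_d}$ generates surface-supported terms on $\{x_d=0\}$ that are not of the required form $D^\alpha g_\alpha$ with $g_\alpha\in L_q(\o_\infty)$. One resolution is to note that a single higher-order Hestenes operator is simultaneously bounded on $L_q$ and on $W_q^{1,2m}$ from $\o_\infty^+$ to $\o_\infty$, hence by interpolation on $\h_q^m$; but this presupposes identifying the paper's $\bH_q^{-m}$-based definition of $\h_q^m(\o_\infty^+)$ with the interpolation space of $L_q$ and $W_q^{1,2m}$ on the half space, which is itself a nontrivial point you would have to supply. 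Your alternative at the end, arguing directly on the half space by combining a slicewise spatial Sobolev inequality with interpolation in time driven by $u_t\in\bH_q^{-m}$, bypasses the extension issue entirely and is probably the shorter and more robust route; I would develop that one.
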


{In the following lemma, we obtain  {a} H\"older {estimate} of $D^{m-1}u$.}
\begin{lemma}\label{regularity lemma}
Assume that $u\in C_{loc}^\infty(\overline{{\o_\infty^+}})$ and satisfies
\begin{equation*}
u_t+(-1)^m\l_0 u=0 \quad \text{in}\quad Q_2^+
\end{equation*}
with the conormal derivative boundary condition on $\{x_d=0\}\cap Q_2$. Then for any $\gamma\in (0,1)$ there exists a constant $N=N(d,m,n,\delta,\gamma,K)$ such that
\begin{equation*}
\|D^{m-1}u\|_{\frac{\gamma}{2m}, \gamma, Q_1^+}\le N \|u\|_{L_2(Q_2^+)}.
\end{equation*}
\end{lemma}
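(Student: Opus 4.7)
The proof proceeds by a regularity bootstrap from $L_2$ up to arbitrarily high $L_p$ integrability, followed by a parabolic Sobolev embedding into H\"older spaces. The crucial structural fact I would exploit is that, because the leading coefficients depend only on $t$, every tangential derivative $v=D_{x'}^\ell u$ (for any $\ell\ge 0$) again satisfies the homogeneous equation $v_t+(-1)^m\l_0 v=0$ in $Q_2^+$ with the same conormal boundary condition on $\{x_d=0\}\cap Q_2$; in particular, Corollary~\ref{L_2 high order}, Lemma~\ref{5.2}, and Lemma~\ref{sobolev} can each be applied to $v$.

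First I would apply Corollary~\ref{L_2 high order} to $u$ and to each $D_{x'}^\ell u$ to conclude, on nested smaller cylinders inside $Q_2^+$,
\[
\|D^\theta D_{x'}^\ell u\|_{L_2(Q_r^+)}\le N\|u\|_{L_2(Q_2^+)}
\]
for every nonneg integer $\ell$ and every multi-index $\theta$ with $\theta_d\le m$. In particular $D^m(D_{x'}^\ell u)\in L_2$ locally. Rewriting $(D_{x'}^\ell u)_t$ via the equation as $-(-1)^m\sum_{|\alpha|=m}D^\alpha g_\alpha^{(\ell)}$ with $g_\alpha^{(\ell)}=\sum_{|\beta|=m}A^{\alpha\beta}(t)D^\beta D_{x'}^\ell u\in L_2$, I obtain $D_{x'}^\ell u\in\h_2^m$ locally, with norm bounded by $N\|u\|_{L_2(Q_2^+)}$.

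Next I would iterate integrability. Lemma~\ref{sobolev} applied to $D_{x'}^\ell u$ and $D_{x'}^{\ell+1}u$ gives $D^{m-1}(D_{x'}^\ell u)$ and $D_{x'}^{\ell+1}D^{m-1}u$ in $L_{p_1}$, where $1/p_1=1/2-1/(d+2m)$. Lemma~\ref{5.2} applied to $D_{x'}^\ell u$ then upgrades $D^m(D_{x'}^\ell u)$ to $L_{p_1}$, and the equation gives $(D_{x'}^\ell u)_t\in\bH_{p_1}^{-m}$; hence $D_{x'}^\ell u\in\h_{p_1}^m$ locally. Iterating this loop a finite number of times, each step advancing $1/p$ by $1/(d+2m)$ and shrinking the cylinder slightly, produces $D_{x'}^\ell u\in\h_p^m(Q_{r^*}^+)$ for every prescribed $p<\infty$ and every $\ell\ge 0$, on some $Q_{r^*}^+$ with $1<r^*<2$.

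Finally, for $p$ sufficiently large, the parabolic Sobolev--Morrey embedding yields $D^{m-1}(D_{x'}^\ell u)\in C^{\gamma/(2m),\gamma}$ for the prescribed $\gamma\in(0,1)$; taking $\ell=0$ gives the desired estimate. The main technical obstacle will be this last step: the $x$-H\"older regularity of $D^{m-1}u$ follows immediately from $D^mu\in L_p$ for large $p$ by Morrey's theorem, but the $t$-H\"older regularity is subtler because only $u_t\in\bH_p^{-m}$ is directly available. This can be resolved by a mollification-in-$x$ argument: using the representation $u_t=\sum_{|\alpha|=m}D^\alpha g_\alpha$ with $\|g_\alpha\|_{L_p}$ controlled, and the high tangential integrability obtained in the bootstrap, one bounds the time oscillation of a mollified $D^{m-1}u$ via integration by parts against the mollifier, then optimizes the mollification scale against the spatial H\"older bound.
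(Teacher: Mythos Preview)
Your proposal is correct and follows essentially the same bootstrap as the paper: start from the local $L_2$ energy estimate, use the parabolic Sobolev embedding (Lemma~\ref{sobolev}) to gain integrability of $D^{m-1}u$ and $D_{x'}D^{m-1}u$, feed this into Lemma~\ref{5.2} to upgrade $D^m u$ to the same $L_p$, reconstitute the $\h_p^m$ norm from the equation, and iterate; the paper carries exactly this loop, shrinking the cylinder at each step until $p_n=(d+2m)/(1-\gamma)$.

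The one place you diverge is the last paragraph. You flag the $t$-H\"older regularity of $D^{m-1}u$ as ``the main technical obstacle'' and sketch a mollification-in-$x$ argument to extract it from $u_t\in\bH_p^{-m}$. The paper does not do this: it simply invokes the parabolic Sobolev embedding one more time. The point is that the embedding $\h_p^m\hookrightarrow C^{\gamma/(2m),\gamma}$ for $D^{m-1}u$ when $p>(d+2m)/(1-\gamma)$ is a standard fact about these half-derivative parabolic spaces (it is the Morrey endpoint of Lemma~\ref{sobolev}, and is what Lemma~\ref{sobolev} is stated to set up). The time regularity is already encoded in the $\h_p^m$ norm through $u_t\in\bH_p^{-m}$, and the embedding handles it automatically; no separate mollification step is needed. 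Your proposed argument would work, but it amounts to re-proving that embedding by hand.
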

\begin{proof}
Due to Lemma \ref{L2 local} and the definition of $\|\cdot\|_{\h_p^m}$, we have
\begin{equation*}
\|u\|_{\h_2^m(Q_{r_1}^+)}\le N(d,r_1)\|u\|_{L_2(Q_2^+)},
\end{equation*}
where $1<r_1<2$.
From Lemma \ref{sobolev}, we know that there is a $p_1$ satisfying $\frac{1}{2}-\frac{1}{p_1}\le \frac{1}{d+2m}$ such that
\begin{equation*}
\|D^{m-1}u\|_{L_{p_1}(Q_{r_1^\prime}^+)}\le N(d,p_1) \|u\|_{\h_2^m(Q_{r_1}^+)}\le N(d,r_1,p_1)\|u\|_{L_2(Q_2^+)},
\end{equation*}
where $1<r_1^\prime<r_1$.
Since $D_{x^\prime} u$ satisfies the same system and boundary condition as $u$, with slight modification of the argument above and {Lemma \ref{L2 local}} we can show that
\begin{equation*}
\|D_{x^\prime}D^{m-1}u\|_{L_{p_1}(Q_{r_1^\prime}^+)}\le  N(d,r_1,p_1)\|u\|_{L_2(Q_2^+)}.
\end{equation*}
From \eqref{lp boot}, choosing $1<r_2<r_1^\prime$ so that
\begin{equation*}
\|D^m u\|_{L_{p_1}(Q_{r_2}^+)}\le N (\|D_{x^\prime}D^{m-1}u\|_{L_{p_1}(Q^+_{r^\prime_1})}+\|u\|_{L_{p_1}(Q_{r^\prime_1}^+)})\le N\|u\|_{L_2(Q_2^+)},
\end{equation*}
which implies
\begin{equation*}
\|u\|_{\h_{p_1}^m(Q_{r_2}^+)}\le N \|u\|_{L_2(Q_2^+)}.
\end{equation*}
By induction, we can choose an increasing sequence $p_1,\,p_2,\ldots$, such that
\begin{equation*}
\frac{1}{p_{i}}-\frac{1}{p_{i+1}}\le \frac{1}{d+2m},
\end{equation*}
and a sequence of decreasing domains $Q_{r_1}^+\supset Q_{r_2}^+\supset \cdots$, such that
\begin{equation*}
\|u\|_{\h_{p_i}^m(Q_{r_{2i}}^+)}\le N\|u\|_{L_2(Q_2^+)}.
\end{equation*}
It is obvious that for any $\gamma\in(0,1)$, in finite steps we can always take $p_n= (d+2m)/(1-\gamma)$ and $Q_{r_{2n}}^+\supset Q_1^+$. Finally applying the Sobolev embedding theorem again, we prove the lemma.
\end{proof}

\begin{corollary}\label{mean basic}
Let $\lambda\ge 0$, $\gamma\in(0,1)$, and $X_0=(t_0,x_0^\prime,0)$, where $t_0\in \bR$ and $x_0^\prime\in \bR^{d-1}$. Assume that $u\in C_{loc}^\infty(\overline{{\o_\infty^+}})$ satisfies
\begin{equation*}
u_t+(-1)^m \l_0 u+\lambda u=0
\end{equation*}
in $Q_4^+$ with the conormal derivative boundary condition on $\{x_d=0\}\cap Q_4$. Then there exists a constant $N=N(d,m,n,\delta,\gamma{,K})$ such that
\begin{equation}
[D_{x^\prime}D^{m-1} u]_{\frac{\gamma}{2m},\gamma, Q_1^+(X_0)}+\lambda^{\frac{1}{2}}[u]_{\frac{\gamma}{2m},\gamma, Q_1^+(X_0)} \le N\sum_{k=0}^m\lambda^{\frac{1}{2}-\frac{k}{2m}}\|D^k u\|_{L_2(Q_4^+(X_0))}.\label{tan basic}
\end{equation}

\end{corollary}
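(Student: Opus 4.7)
The plan is to apply an auxiliary variable trick of Agmon that absorbs the zeroth-order term $\lambda u$ into a $2m$-th order derivative in an extra spatial direction, reducing the problem to the $\lambda=0$ case in one higher spatial dimension. Set $\mu=\lambda^{1/(2m)}$ and take $\phi(y)$ to be $\cos(\mu y)$ or $\sin(\mu y)$, so that $(-1)^m D_y^{2m}\phi=\lambda\phi$. Define $v(t,x,y)=u(t,x)\phi(y)$ on $\bR\times\bR^d_+\times\bR$. A direct computation shows
\begin{equation*}
v_t+(-1)^m(\l_0+D_y^{2m})v=0,
\end{equation*}
and the conormal boundary condition on $\{x_d=0\}$ is preserved because $\phi$ does not depend on $x$. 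The augmented operator $\l_0+D_y^{2m}$ is strongly elliptic in the $(d+1)$ spatial variables $(x,y)$, with $y$ tangential to the boundary.

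Next, I apply the $(d+1)$-dimensional analog of Lemma \ref{regularity lemma}, whose proof carries over verbatim, to a function of the form $w:=D_\tau(v-P)$, where $D_\tau$ is any single tangential derivative (either $D_{x_i}$ with $i\le d-1$ or $D_y$) and $P(x,y)$ is a polynomial of degree at most $m-1$ selected via the $(d+1)$-dimensional version of Lemma \ref{tech 3} so that $(D^\alpha(v-P))_{Q_2^+((X_0,0))}=0$ for all $|\alpha|\le m-1$. Here $Q_r^+((X_0,0))$ denotes the $(d+1)$-dimensional parabolic half-cylinder centered at $(t_0,x_0,0)$. Since tangential differentiation preserves both the equation and the boundary condition, and since $D^m P\equiv 0$, this combined with Lemma \ref{tech 3} gives
\begin{equation*}
[D^{m-1}w]_{\frac{\gamma}{2m},\gamma,Q_1^+((X_0,0))}\le N\|w\|_{L_2(Q_2^+((X_0,0)))}\le N\|D^m v\|_{L_2(Q_2^+((X_0,0)))},
\end{equation*}
where $D^{m-1}$ and $D^m$ denote spatial derivatives of the indicated order in the variables $(x,y)$.

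Now restrict the H\"older semi-norm to the slice $\{y=0\}$, which can only shrink it. With $\phi=\cos(\mu y)$ and the choice $D_\tau=D_{x_i}$, the left-hand side dominates $[D_{x'}D^{m-1}_x v]_{\frac{\gamma}{2m},\gamma,Q_1^+((X_0,0))}$, and at $y=0$ one has $D_{x'}D^{m-1}_x v|_{y=0}=D_{x'}D^{m-1}_x u$, giving the desired bound on $[D_{x'}D^{m-1}u]_{\frac{\gamma}{2m},\gamma,Q_1^+(X_0)}$. For the second term, take $D_\tau=D_y$; then the left-hand side dominates $[D_y^m v]_{\frac{\gamma}{2m},\gamma,Q_1^+((X_0,0))}$. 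Choosing $\phi=\cos(\mu y)$ for $m$ even or $\phi=\sin(\mu y)$ for $m$ odd forces $\phi^{(m)}(0)=\pm\mu^m$, hence $D_y^m v|_{y=0}=\pm\mu^m u$, which bounds $\mu^m[u]_{\frac{\gamma}{2m},\gamma,Q_1^+(X_0)}=\lambda^{1/2}[u]_{\frac{\gamma}{2m},\gamma,Q_1^+(X_0)}$.

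Finally, from $|D_x^\alpha D_y^\beta v|\le\mu^\beta|D_x^\alpha u|$ and the inclusion $Q_2^+((X_0,0))\subset Q_2^+(X_0)\times(-2,2)$, a direct integration gives
\begin{equation*}
\|D^m v\|_{L_2(Q_2^+((X_0,0)))}\le N\sum_{k=0}^m\mu^{m-k}\|D^k u\|_{L_2(Q_2^+(X_0))}=N\sum_{k=0}^m\lambda^{\frac{1}{2}-\frac{k}{2m}}\|D^k u\|_{L_2(Q_2^+(X_0))},
\end{equation*}
which is controlled by the right-hand side of the corollary. The main obstacle is essentially bookkeeping: verifying that Lemmas \ref{regularity lemma} and \ref{tech 3} extend unchanged to $(d+1)$ spatial dimensions (they do, since both proofs rely only on dimension-free tools such as parabolic Sobolev embeddings and Poincar\'e-type inequalities), and carefully tracking the parity of $m$ in Agmon's trick so as to produce exactly the factor $\mu^m=\lambda^{1/2}$.
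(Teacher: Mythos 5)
Your proposal follows the same route as the paper: translate so $X_0$ is at the origin, handle $\lambda=0$ by combining the tangential H\"older estimate (Lemma \ref{regularity lemma}) applied to a tangential derivative with the Poincar\'e-type argument of Lemma \ref{tech 3}, and then lift to $\lambda>0$ via Agmon's auxiliary-variable trick. The only cosmetic deviation is your use of two separate test factors $\cos(\mu y)$ and $\sin(\mu y)$ chosen according to the parity of $m$, one for each term on the left-hand side, whereas the paper takes $\zeta(y)=\cos(\lambda^{1/2m}y)+\sin(\lambda^{1/2m}y)$, which yields $\zeta(0)=1$ and $|D^m\zeta(0)|=\lambda^{1/2}$ simultaneously for all $m$ and handles both terms in one stroke; both variants are legitimate.

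There is, however, a genuine gap. You assert that ``the augmented operator $\l_0+D_y^{2m}$ is strongly elliptic in the $(d+1)$ spatial variables $(x,y)$.'' This is false for $m\ge 2$. Writing the augmented operator as $\sum_{|\alpha|=|\beta|=m}D^\alpha(\hat A^{\alpha\beta}D^\beta\cdot)$ with multi-indices $\alpha,\beta$ ranging over $(x,y)$, the coefficient $\hat A^{\alpha\beta}$ vanishes whenever $\alpha$ or $\beta$ mixes $x$- and $y$-derivatives (that is, $0<\alpha_y<m$ or $0<\beta_y<m$), so the pointwise condition \eqref{strong} cannot hold: taking $\xi_\alpha\ne0$ only for such a mixed $\alpha$ makes the left-hand side of \eqref{strong} zero while the right-hand side is positive. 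The paper is explicit about this and works around it: the augmented operator satisfies only the weaker integral coercivity \eqref{eq3.06}, and Remark \ref{rem4.2} then guarantees that Theorem \ref{L_2} and hence Lemmas \ref{nor and tan} through \ref{regularity lemma} still hold under \eqref{eq3.06}. Your claim that Lemmas \ref{regularity lemma} and \ref{tech 3} carry over ``unchanged'' because they ``rely only on dimension-free tools'' misses the point; the issue is not the dimension but the ellipticity hypothesis in the chain of lemmas that ends at Lemma \ref{regularity lemma}, and you need the replacement \eqref{eq3.06}-based version to justify that invocation.
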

\begin{proof}
By using a translation in $t$ and $x^\prime$, we may assume that $X_0=(0,0)$. First let $\lambda =0$. The inequality in the corollary  {becomes}
\begin{equation*}
[D_{x^\prime}D^{m-1}u]_{\frac{\gamma}{2m},\gamma, Q_1^+}\le N\|D^m u\|_{L_2(Q_4^+)}.
\end{equation*}
We differentiate the system with respect to $x^\prime$ and apply Lemma \ref{regularity lemma} to $D_{x^\prime}u$ to obtain
\begin{equation*}
[D_{x^\prime}D^{m-1}u]_{\frac{\gamma}{2m},\gamma, Q_1^+}\le N\|D_{x^\prime} u\|_{L_2(Q_4^+)}.
\end{equation*}
Then we apply the same method as in the proof of Lemma \ref{tech 3} by considering $u-P$.

 In order to handle the case $\lambda>0$, we implement an argument originally due to S. Agmon.  Specifically, let
\begin{equation*}
\zeta (y)=\cos(\lambda^{\frac{1}{2m}}y)+\sin(\lambda^{\frac{1}{2m}}y).
\end{equation*}
Note that
\begin{equation*}
(-1)^m D_y^{2m}\zeta(y)=\lambda \zeta(y),\quad \zeta(0)=1,\quad |D^m\zeta(0)|=\lambda^\.5.
\end{equation*}
Denote $(t,z)=(t,y,x)$ to be a point in $\bR^{d+2}$ and set
\begin{equation*}
\hat{u}(t,z)=u(t,x)\zeta(y),\quad \hat{Q}_r^+=(-r^{2m},0)\times\{z:|z|<r,z_{d+1}>0,z\in\bR^{d+1}\}.
\end{equation*}
Obviously, $\hat{u}$ satisfies
\begin{equation*}
\hat{u}_t+(-1)^m\l_0 \hat{u}+(-1)^m D_y^{2m}\hat{u}=0
\end{equation*}
in $\hat{Q}_4^+$ with the conormal derivative boundary condition on $\{z_{d+1}=0\}\cap \hat{Q}_4$. Note that although our new operator above is not strongly elliptic, it satisfies \eqref{eq3.06}. By Remark \ref{rem4.2}, Theorem \ref{L_2} and thus Lemmas \ref{nor and tan}-\ref{regularity lemma} still hold. Upon applying the lemma with $\lambda=0$ to $\hat{u}$, we find
\begin{equation}
[D_{z^\prime}D^{m-1}\hat{u}]_{\frac{\gamma}{2m},\gamma, \hat{Q}_1^+}\le N\|D^m \hat{u}\|_{L_2(\hat{Q}_4^+)},\label{sa estimate}
\end{equation}
{where $z'=(z_1,\ldots,z_d)$}. Since, for example
\begin{equation*}
\lambda^\.5[u]_{\frac{\gamma}{2m},\gamma, Q_1^+}\le [D^m_y \hat{u}]_{\frac{\gamma}{2m},\gamma,\hat{Q}_1^+},
\end{equation*}
we only need to bound the right-hand side of \eqref{sa estimate} by the right-hand side of \eqref{tan basic}.
This can be done easily, since $D^m \hat{u}$ is a linear combination of terms like
\begin{equation*}
\lambda^{\.5-\frac{k}{2m}}\cos(\lambda^{\frac{1}{2m}}y)D_x^ku(t,x),\quad \lambda^{\.5-\frac{k}{2m}}\sin(\lambda^{\frac{1}{2m}}y)D_x^ku(t,x),\quad k=0,\ldots,m.
\end{equation*}
{The corollary is proved.}
\end{proof}

In the following lemma we obtain a mean oscillation estimate of $D_{x^\prime}D^{m-1}u$ for homogeneous systems.
\begin{lemma}\label{mean tan}
Let $r\in(0,\infty),\kappa\in[64,\infty)$, $\gamma\in (0,1)$, and $X_0=(t_0,x_0)\in \overline{\o_\infty^+}$. Assume that $u\in C_{loc}^\infty(\overline{{\o_\infty^+}})$ satisfies
\begin{equation*}
u_t+(-1)^m \l_0 u+\lambda u=0
\end{equation*}
in $Q_{\kappa r}^+(X_0)$ with the conormal derivative boundary condition on $\{x_d=0\}\cap Q_{\kappa r}(X_0)$. Then
\begin{align*}
(|D_{x^\prime}D^{m-1}u-(D_{x^\prime}D^{m-1}u)_{Q_r^+(X_0)}|)_{Q_r^+(X_0)}+\lambda^{\frac{1}{2}}(|u-(u)_{Q_r^+(X_0)}|)_{Q_r^+(X_0)}\nonumber\\
\le N\kappa^{-\gamma}\sum_{k=0}^m \lambda^{\frac{1}{2}-\frac{k}{2m}}(|D^k u|^2)_{Q_{\kappa r}^+(X_0)}^\frac{1}{2},
\end{align*}
where $N=N(d,n,m,\delta,\gamma{,K})$.
\end{lemma}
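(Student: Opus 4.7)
The plan is to mimic the two-case strategy used in Lemma \ref{mean special1} and Theorem \ref{mean half space}, with Corollary \ref{mean basic} playing the role that Lemma \ref{homo mean} did there. By the parabolic scaling $(t,x)\mapsto(\mu^{2m}t,\mu x)$, the assumptions are preserved (the scaled coefficients still depend only on $t$, and $\lambda$ is replaced by $\mu^{2m}\lambda$), and both sides of the target inequality scale in a compatible way. Choosing $\mu=\kappa r/8$ reduces the proof to the fixed choice $r=8/\kappa$, $\kappa r=8$.

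With that normalization I would split into two geometric cases according to $x_{0,d}$, the last coordinate of $x_0$.

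\emph{Case 1 (boundary case): $x_{0,d}<1$.} Set $Y_0:=(t_0,x_0^\prime,0)$. Then
\[
Q_r^+(X_0)\subset Q_1^+(Y_0)\subset Q_4^+(Y_0)\subset Q_{\kappa r}^+(X_0),
\]
so that $u$ satisfies \eqref{lambda equation} in $Q_4^+(Y_0)$ with the conormal boundary condition on $\{x_d=0\}\cap Q_4(Y_0)$. Applying Corollary \ref{mean basic} after translating to $Y_0$ gives
\[
[D_{x^\prime}D^{m-1}u]_{\frac{\gamma}{2m},\gamma,Q_1^+(Y_0)}+\lambda^{1/2}[u]_{\frac{\gamma}{2m},\gamma,Q_1^+(Y_0)}\le N\sum_{k=0}^m\lambda^{\frac12-\frac{k}{2m}}\|D^ku\|_{L_2(Q_4^+(Y_0))}.
\]
For any function $f$, the elementary bound $(|f-(f)_{Q_r^+(X_0)}|)_{Q_r^+(X_0)}\le Nr^\gamma[f]_{\frac{\gamma}{2m},\gamma,Q_r^+(X_0)}$ together with $Q_r^+(X_0)\subset Q_1^+(Y_0)$ and $Q_4^+(Y_0)\subset Q_{\kappa r}^+(X_0)$ then converts $L_2$-norms on $Q_4^+(Y_0)$ into $L_2$-averages on $Q_{\kappa r}^+(X_0)$, picking up a factor $|Q_{\kappa r}^+(X_0)|^{1/2}\sim(\kappa r)^{(d+2m)/2}$. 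Combined with the front factor $r^\gamma$ and $r=8/\kappa$, we get the desired $\kappa^{-\gamma}$.

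\emph{Case 2 (interior case): $x_{0,d}\ge 1$.} Now $Q_1(X_0)\subset Q_{\kappa r}^+(X_0)$ and $u$ solves \eqref{lambda equation} classically in $Q_1(X_0)$ with no boundary contribution. The argument of Corollary \ref{mean basic} carries over verbatim with the flat-boundary Hölder estimate of Lemma \ref{regularity lemma} replaced by the corresponding interior Hölder estimate for homogeneous higher-order parabolic systems with coefficients depending only on $t$, which follows from the interior versions of Lemmas 2 and 3 in \cite{DK10} combined with the same bootstrap/Sobolev-embedding/Agmon-trick used in Corollary \ref{mean basic}. This yields
\[
[D_{x^\prime}D^{m-1}u]_{\frac{\gamma}{2m},\gamma,Q_{1/4}(X_0)}+\lambda^{1/2}[u]_{\frac{\gamma}{2m},\gamma,Q_{1/4}(X_0)}\le N\sum_{k=0}^m\lambda^{\frac12-\frac{k}{2m}}\|D^ku\|_{L_2(Q_1(X_0))},
\]
and the same passage from semi-norm to mean oscillation and from $L_2$-norm to $L_2$-average over $Q_{\kappa r}^+(X_0)$ completes this case.

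The only real obstacle is checking that one actually has the clean interior analog of Corollary \ref{mean basic} for the operator $\l_0+\lambda$ with coefficients only measurable in $t$; this requires rerunning the bootstrap of Lemma \ref{regularity lemma} without any boundary condition (using the interior $L_2$ Caccioppoli and interior $L_p$ solvability in the whole space), and then applying the Agmon extension to handle $\lambda>0$. Once that interior Hölder estimate is in hand, the rest of the argument is routine bookkeeping of the scalings between $r$, $\kappa r$, and the cylinder volumes.
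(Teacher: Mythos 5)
Your strategy matches the paper's exactly: the paper dispatches this lemma by saying ``Using Corollary~\ref{mean basic}, the proof is exactly the same as Lemma~\ref{mean special1},'' i.e.\ normalize by parabolic scaling, split into a boundary case and an interior case, and in each case convert the H\"older seminorm estimate into a mean-oscillation estimate, tracking how $r^\gamma$ and the cylinder volume combine to give the $\kappa^{-\gamma}$ factor. Your closing remark about needing the interior analogue of Corollary~\ref{mean basic} is not really an obstacle: in the interior there is no distinction between $D_{x'}D^{m-1}u$ and the rest of $D^m u$, and the needed interior H\"older estimate is exactly the whole-space version from \cite{DK10} (Lemmas~2 and~3 there), just as the paper invokes in Case~2 of Lemma~\ref{mean special1}.

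There is, however, one arithmetic slip that you should fix. With the normalization $r=8/\kappa$ (so $r\le 1/8$) and Case~1 declared as $x_{0,d}<1$, the inclusion $Q_r^+(X_0)\subset Q_1^+(Y_0)$ can fail: e.g.\ $x_{0,d}=0.95$ gives $r+x_{0,d}>1$. The paper's own Lemma~\ref{mean special1} avoids this by using $Q_2(Y_0)\subset Q_4(Y_0)$ as the intermediate cylinders (so $r+x_{0,d}<9/8<2$ and $4+x_{0,d}<5\le\kappa r$). Since Corollary~\ref{mean basic} is stated on $Q_1^+\subset Q_4^+$ rather than $Q_1^+\subset Q_2^+$, the cleanest fix here is either to rescale Corollary~\ref{mean basic} to the pair $Q_1^+\subset Q_2^+$ (which its proof via Lemma~\ref{regularity lemma} supports), so the paper's cylinder chain goes through verbatim, or to change the case boundary to something like $x_{0,d}<7/8$ so that $r+x_{0,d}\le 1$ holds and $Q_4^+(Y_0)\subset Q_{\kappa r}^+(X_0)$ still holds. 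Once the inclusions are repaired, the rest of your bookkeeping (the factor $r^\gamma\cdot|Q_{\kappa r}^+|^{1/2}\sim\kappa^{-\gamma}$ since $\kappa r$ is held fixed) is correct.
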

\begin{proof}
Using Corollary \ref{mean basic}, the proof is exactly the same as Lemma \ref{mean special1} and thus is omitted.
\end{proof}
{In the following proposition, we obtain a mean oscillation estimate {of $D_{x^\prime}D^{m-1}u$} for systems with simple coefficients.}
\begin{proposition}\label{fund proposition}
Let $r\in (0,\infty),\kappa\in [128,\infty),\lambda> 0, \gamma\in(0,1)$, and $X_0=(t_0,x_0)\in \overline{\o_\infty^+}$. Assume that $u\in C_{loc}^\infty (\overline{\o_\infty^+})$ satisfies
\begin{equation*}
u_t+(-1)^m \l_0 u+\lambda u=\sum_{|\alpha|\le m}D^\alpha f_\alpha
\end{equation*}
in $Q^+_{\kappa r}(X_0)$ with the conormal derivative boundary condition on $\{x_d=0\}$, where $f_\alpha \in L_{2,loc}(\o_\infty^+)$, $|\alpha|\le m$. Then we have
\begin{align*}
(|D_{x^\prime}D^{m-1} u-(D_{x^\prime}D^{m-1} u)_{Q_r^+(X_0)}|)_{Q_r^+(X_0)}+\lambda^{\frac{1}{2}}(|u-(u)_{Q_r^+(X_0)}|)_{Q_r^+(X_0)}\\
\le N\kappa^{-\gamma}\sum_{k=0}^m\lambda^{\frac{1}{2}-\frac{k}{2m}}(|D^k u|^2)_{Q_{\kappa r}^+(X_0)}^\frac{1}{2}+N\kappa^{m+\frac{d}{2}}\sum_{|\alpha|\le m}\lambda^{\frac{|\alpha|}{2m}-\.5}(|f_\alpha|^2)^{\frac{1}{2}}_{Q_{\kappa r}^+(X_0)},
\end{align*}
where $N=N(d,m,n,\delta,\gamma{,K})$.
\end{proposition}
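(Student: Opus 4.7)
I would follow the blueprint of the proof of Theorem~\ref{mean half space}, substituting Lemma~\ref{mean tan} for Lemma~\ref{mean special1} and Theorem~\ref{L_2} for Lemma~\ref{L2 special}.

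First, by the same cutoff and mollification preliminaries as in the proof of Theorem~\ref{mean half space}, I may assume that $u$, $A^{\alpha\beta}$, and $f_\alpha$ are smooth and that $f_\alpha\in L_2(\o_\infty^+)$. Pick $\zeta\in C_0^\infty$ with $\zeta\equiv 1$ on $Q_{\kappa r/2}(X_0)$ and $\zeta\equiv 0$ outside $Q_{\kappa r}(X_0)$. Since $\l_0$ is strongly elliptic, Theorem~\ref{L_2} produces a unique $w\in\h_2^m(\o_\infty^+)$ solving
\begin{equation*}
w_t+(-1)^m\l_0 w+\lambda w=\sum_{|\alpha|\le m}D^\alpha(\zeta f_\alpha)\quad\text{in }\o_\infty^+
\end{equation*}
with the conormal boundary condition on $\{x_d=0\}$, and (by a mollification of $A^{\alpha\beta}$ in $t$ and of $f_\alpha$) $w$ may be taken smooth. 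The a priori estimate of Theorem~\ref{L_2} reads
\begin{equation*}
\sum_{k=0}^m\lambda^{1-\frac{k}{2m}}\|D^k w\|_{L_2(\o_\infty^+)}\le N\sum_{|\alpha|\le m}\lambda^{\frac{|\alpha|}{2m}}\|f_\alpha\|_{L_2(Q_{\kappa r}^+(X_0))}.
\end{equation*}

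Set $v:=u-w$. Because $\zeta\equiv 1$ on $Q_{\kappa r/2}(X_0)$, $v$ satisfies $v_t+(-1)^m\l_0 v+\lambda v=0$ in $Q_{\kappa r/2}^+(X_0)$ with the conormal boundary condition on $\{x_d=0\}\cap Q_{\kappa r/2}(X_0)$. Since $\kappa/2\ge 64$, Lemma~\ref{mean tan} applied to $v$ on $Q_{\kappa r/2}^+(X_0)$ yields
\begin{align*}
&\bigl(|D_{x'}D^{m-1}v-(D_{x'}D^{m-1}v)_{Q_r^+(X_0)}|\bigr)_{Q_r^+(X_0)}+\lambda^{1/2}\bigl(|v-(v)_{Q_r^+(X_0)}|\bigr)_{Q_r^+(X_0)}\\
&\qquad\le N\kappa^{-\gamma}\sum_{k=0}^m\lambda^{\frac12-\frac{k}{2m}}(|D^k v|^2)^{1/2}_{Q^+_{\kappa r/2}(X_0)}.
\end{align*}

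Finally, I write $u=v+w$ and apply the triangle inequality on both sides. On the left, replacing $D_{x'}D^{m-1}v$ by $D_{x'}D^{m-1}u$ and $v$ by $u$ introduces errors of the form $(|D^m w|^2)^{1/2}_{Q_r^+(X_0)}$ and $\lambda^{1/2}(|w|^2)^{1/2}_{Q_r^+(X_0)}$. Bounding $(|D^k w|^2)^{1/2}_{Q_r^+}\le |Q_r^+|^{-1/2}\|D^k w\|_{L_2(\o_\infty^+)}$ and using the $w$ estimate above, together with the volume ratio $|Q_r^+|^{-1/2}|Q_{\kappa r}^+|^{1/2}=\kappa^{m+d/2}$, these terms are controlled by $N\kappa^{m+d/2}\sum_{|\alpha|\le m}\lambda^{\frac{|\alpha|}{2m}-\frac12}(|f_\alpha|^2)^{1/2}_{Q_{\kappa r}^+(X_0)}$. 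On the right, $(|D^k v|^2)^{1/2}_{Q_{\kappa r/2}^+}\le N[(|D^k u|^2)^{1/2}_{Q_{\kappa r}^+}+(|D^k w|^2)^{1/2}_{Q_{\kappa r}^+}]$; the $w$ contribution is again absorbed by the $f_\alpha$ term (without the $\kappa^{m+d/2}$ factor, since here the average is already over a cylinder comparable in size to $Q_{\kappa r}^+$). Collecting everything delivers the desired inequality.

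\textbf{Main obstacle.} The analytic substance is packaged into Lemma~\ref{mean tan}, which in turn rests on differentiating the homogeneous equation tangentially and combining the bootstrap embedding of Lemma~\ref{regularity lemma} with Agmon's trick for $\lambda>0$. What remains here is a careful accounting of powers of $\kappa$: the decay factor $\kappa^{-\gamma}$ arises from the homogeneous estimate, while the larger exponent $\kappa^{m+d/2}$ is generated exclusively by the volume ratio $|Q_r^+|^{-1/2}|Q_{\kappa r}^+|^{1/2}$ that appears in the error terms living on the small cylinder $Q_r^+(X_0)$. Keeping these two exponents separate so that each lands on the correct term is the only delicate point.
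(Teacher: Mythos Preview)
Your proposal is correct and matches the paper's proof essentially step for step: the same cutoff $\zeta$, the same auxiliary solution $w$ obtained from Theorem~\ref{L_2}, the same decomposition $u=v+w$ with Lemma~\ref{mean tan} applied to $v$ on $Q_{\kappa r/2}^+(X_0)$, and the same triangle-inequality bookkeeping that separates the $\kappa^{-\gamma}$ and $\kappa^{m+d/2}$ contributions. Your identification of which $w$-terms pick up the volume-ratio factor $\kappa^{m+d/2}$ and which do not is exactly how the paper organizes the final estimate.
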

\begin{proof}
Let $\zeta$ be the function defined at the beginning of the proof of Theorem \ref{mean half space}.
By Theorem \ref{L_2}, there exists a unique solution $w\in \h^m_2(\o_\infty^+)$ of
\begin{equation*}
w_t+(-1)^m \l_0 w+\lambda w=\sum_{|\alpha|\le m}D^\alpha (\zeta f_\alpha)
\end{equation*}
in $\o_\infty^+$ with the conormal derivative boundary condition on $\{x_d=0\}$. Moreover,
\begin{equation*}
\sum_{|\alpha|\le m} \lambda^{1-\frac{|\alpha|}{2m}}\|D^\alpha w\|_{L_2(\o_{t_0}^+)}\le N\sum_{|\alpha|\le m}\lambda^{\frac{|\alpha|}{2m}}\|\zeta f_\alpha\|_{L_2(\o_{t_0}^+)},
\end{equation*}
from which, we get
\begin{align*}
(|D^m w|^2)^{\frac{1}{2}}_{Q_r^+(X_0)}
+\lambda^{\frac{1}{2}}(|w|^2)_{Q_r^+(X_0)}^{\frac{1}{2}}&\le N\kappa^{m+\frac{d}{2}}\sum_{|\alpha|\le m}\lambda^{\frac{|\alpha|}{2m}-\.5}(|f_\alpha|^2)_{Q_{\kappa r}^+(X_0)}^\.5,\\
\sum_{k=0}^m\lambda^{\.5-\frac{k}{2m}}(|D^k w|^2)^\.5_{Q_{\kappa r}^+(X_0)}&\le N\sum_{|\alpha|\le m}\lambda^{\frac{|\alpha|}{2m}-\.5}(|f_\alpha|^2)^\.5_{Q_{\kappa r}^+(X_0)}.
\end{align*}

By a mollification argument as in Lemma \ref{mean half space}, we can assume $w$ is smooth. {Let $v=u-w$, which is smooth as well and satisfies }
\begin{equation*}
v_t+(-1)^m\l_0 v+\lambda v=\sum_{|\alpha|\le m}D^\alpha ((1-\zeta)f_\alpha)
\end{equation*}
in $\o_\infty^+$ with the conormal derivative boundary condition on $\{x_d=0\}$.  Hence in $Q_{\kappa r/2}^+(X_0)$
\begin{equation*}
v_t+(-1)^m\l_0 v+\lambda v=0.
\end{equation*}
Applying Lemma \ref{mean tan} to $v$,
\begin{align*}
(|D_{x^\prime}D^{m-1}v-(D_{x^\prime}D^{m-1}v)_{Q_r^+(X_0)}|)_{Q_r^+(X_0)}+\lambda^{\frac{1}{2}}(|v-(v)_{Q_r^+(X_0)}|)_{Q_r^+(X_0)}\\
\le N\kappa^{-\gamma}\sum_{k=0}^m \lambda^{\frac{1}{2}-\frac{k}{2m}}(|D^k v|^2)_{Q_{\kappa r/2}^+(X_0)}^{\frac{1}{2}}.
\end{align*}
With all the preparations above and following the proof of Theorem \ref{mean half space}, by the triangle inequality and the Cauchy--Schwarz inequality we have
\begin{align*}
&(|D_{x^\prime}D^{m-1} u-(D_{x^\prime}D^{m-1} u)_{Q_r^+{(X_0)}}|)_{Q_r^+(X_0)}+\lambda^{\frac{1}{2}}(|u-(u)_{Q_r^+(X_0)}|)_{Q_r^+(X_0)}\\
&\le N\bigg((|D_{x^\prime}D^{m-1} v-(D_{x^\prime}D^{m-1} v)_{Q_r^+{(X_0)}}|)_{Q_r^+(X_0)}\\
&\quad +\lambda^{\frac{1}{2}}(|v-(v)_{Q_r^+(X_0)}|)_{Q_r^+(X_0)}+(|D^m w|^2)^{\frac{1}{2}}_{Q_r^+(X_0)}+\lambda^{\frac{1}{2}}(|w|^2)_{Q_r^+(X_0)}^{\frac{1}{2}}\bigg)\\
&\le N\kappa^{-\gamma}\sum_{k=0}^m \lambda^{\frac{1}{2}-\frac{k}{2m}}(|D^k v|^2)_{Q_{\kappa r}^+(X_0)}^{\frac{1}{2}}+ N\kappa^{m+\frac{d}{2}}\sum_{|\alpha|\le m}\lambda^{\frac{|\alpha|}{2m}-\.5}(|f_\alpha|^2)_{Q_{\kappa r}^+(X_0)}^\.5\\
&\le  N\kappa^{-\gamma}\sum_{k=0}^m\lambda^{\frac{1}{2}-\frac{k}{2m}}(|D^k u|^2)_{Q_{\kappa r}^+(X_0)}^\frac{1}{2}+N\kappa^{m+\frac{d}{2}}\sum_{|\alpha|\le m}\lambda^{\frac{|\alpha|}{2m}-\.5}  (|f_\alpha|^2)^{\frac{1}{2}}_{Q_{\kappa r}^+(X_0)}.
\end{align*}
Therefore, we prove the proposition.
\end{proof}
{Next, we consider the case that $A^{\alpha\beta}$ are functions of both $x$ and $t$ and use the argument of freezing the coefficients to obtain:}
\begin{lemma}\label{important tan lemma}
Let $\l $ be the operator in Theorem \ref{theorem tan}. Suppose the lower-order coefficients of $\l$ are all zero. Let $\xi,\nu\in(1,\infty)$ satisfying $1/\xi+1/\nu=1$, {$\gamma\in(0,1)$,} and $\lambda,R\in (0,\infty)$. Assume $u\in C_0^\infty(\overline{\o^+_\infty})$ vanishing outside $Q_R^+(X_1)$, where $X_1\in \overline{\o_\infty^+}$, and
\begin{equation*}
u_t+(-1)^m\l u+\lambda u=\sum_{|\alpha|\le m}D^\alpha f_\alpha
\end{equation*}
with the conormal derivative boundary condition on $\{x_d=0\}$, where $f_\alpha\in L_{2,loc}(\o_\infty^+)$. Then there exists a constant $N=N(d,m,n,\delta ,\xi, K,\gamma)$ such that for any $r\in (0,\infty),\kappa\ge 128$, and $X_0\in \overline{\o_\infty^+}$, we have
\begin{align*}
&(|D_{x^\prime}D^{m-1}u-(D_{x^\prime}D^{m-1}u)_{Q_r^+(X_0)}|)_{Q_r^+(X_0)}+\lambda^\.5(|u-(u)_{Q_r^+(X_0)}|)_{Q_r^+(X_0)}\\
&\le N \kappa^{m+\frac{d}{2}}\big(\sum_{|\alpha|\le m}\lambda^{\frac{|\alpha|}{2m}-\.5}(|f_\alpha|^2)^\.5 _{Q_{\kappa r}^+(X_0)}+(A_R^\#)^{\frac{1}{2\nu}}(|D^m u|^{2\xi})^{\frac{1}{2\xi}}_{Q_{\kappa r}^+(X_0)}    \big)\\
&\quad+ N\kappa^{-\gamma}\sum_{k=0}^m\lambda^{\.5-\frac{k}{2m}}(|D^k u|^2)^\.5_{Q_{\kappa r}^+(X_0)}.\\
\end{align*}
\end{lemma}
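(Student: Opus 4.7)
The plan is to reduce the general case to Proposition \ref{fund proposition} (which handles operators whose leading coefficients depend only on $t$) via the standard freezing-the-coefficients trick in the spatial variable, and then absorb the resulting error through H\"older's inequality and the oscillation control $A_R^\#$.

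Writing $X_0=(t_0,x_0)$, for each $t$ set
\begin{equation*}
\bar A^{\alpha\beta}(t)=\dashint_{B_{\kappa r}^+(x_0)} A^{\alpha\beta}(t,y)\,dy,
\end{equation*}
and let $\bar{\mathcal L}_0$ denote the divergence form operator with leading coefficients $\bar A^{\alpha\beta}(t)$. Spatial averaging preserves the uniform bound $|\bar A^{\alpha\beta}|\le K$ and the strong ellipticity \eqref{strong}, so $\bar{\mathcal L}_0$ is admissible in Proposition \ref{fund proposition}. The equation then reads
\begin{equation*}
u_t+(-1)^m\bar{\mathcal L}_0 u+\lambda u=\sum_{|\alpha|\le m}D^\alpha \tilde f_\alpha
\end{equation*}
with the same conormal boundary condition on $\{x_d=0\}$, where $\tilde f_\alpha=f_\alpha$ for $|\alpha|<m$ and, for $|\alpha|=m$,
\begin{equation*}
\tilde f_\alpha=f_\alpha+(-1)^{m+1}\sum_{|\beta|=m}\bigl(A^{\alpha\beta}(t,x)-\bar A^{\alpha\beta}(t)\bigr)D^\beta u.
\end{equation*}

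Applying Proposition \ref{fund proposition} directly yields the mean oscillation bound for $D_{x'}D^{m-1}u$ and $\lambda^{1/2}u$ with a forcing involving $(|\tilde f_\alpha|^2)^{1/2}_{Q_{\kappa r}^+(X_0)}$. The only non-trivial contribution is from $|\alpha|=m$, which by the triangle inequality and H\"older's inequality with conjugate exponents $\xi,\nu$ satisfies
\begin{align*}
(|\tilde f_\alpha|^2)^{\frac{1}{2}}_{Q_{\kappa r}^+(X_0)}
\le{}& (|f_\alpha|^2)^{\frac{1}{2}}_{Q_{\kappa r}^+(X_0)}\\
&+N\sum_{|\beta|=m}(|A^{\alpha\beta}-\bar A^{\alpha\beta}|^{2\nu})^{\frac{1}{2\nu}}_{Q_{\kappa r}^+(X_0)}(|D^\beta u|^{2\xi})^{\frac{1}{2\xi}}_{Q_{\kappa r}^+(X_0)}.
\end{align*}
Using the $L_\infty$ bound $|A^{\alpha\beta}-\bar A^{\alpha\beta}|\le 2K$ and interpolating,
\begin{equation*}
(|A^{\alpha\beta}-\bar A^{\alpha\beta}|^{2\nu})_{Q_{\kappa r}^+(X_0)}\le (2K)^{2\nu-1}\mathrm{osc}_x(A^{\alpha\beta},Q_{\kappa r}^+(X_0))\le N(K,\nu) A_R^\#
\end{equation*}
whenever $\kappa r\le R$, which produces the factor $(A_R^\#)^{1/(2\nu)}$ in the claimed bound. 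Substituting into the conclusion of Proposition \ref{fund proposition} and rearranging finishes this range of scales.

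The main obstacle is the regime $\kappa r>R$, where the oscillation on $Q_{\kappa r}^+(X_0)$ is no longer bounded by $A_R^\#$. Here the saving hypothesis is that $u$ vanishes outside $Q_R^+(X_1)$, so $D^\beta u$ and $f_\alpha$ are supported in a subset of $Q_{\kappa r}^+(X_0)$ of relative measure at most $(R/(\kappa r))^{d+2m}$. This geometric smallness, combined with the crude pointwise bound $|A^{\alpha\beta}-\bar A^{\alpha\beta}|\le 2K$ and an application of H\"older's inequality, delivers a bound of the same form (after choosing $\bar A^{\alpha\beta}$ to be the average on a ball of radius $R$ centered at $x_1$ when $\kappa r>R$). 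Careful bookkeeping shows the $\kappa^{m+d/2}$ prefactor is preserved, but no new analytic ingredients are required.
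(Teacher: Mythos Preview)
Your approach is correct and is essentially the same freezing-the-coefficients argument as the paper's. The only cosmetic difference is that the paper freezes at a generic point $y$, obtains the estimate from Proposition~\ref{fund proposition} with error $I_y=\int_{Q_{\kappa r}^+(X_0)}|(A^{\alpha\beta}(t,y)-A^{\alpha\beta}(t,x))D^\beta u|^2$, and then \emph{averages $I_y$ over $y\in B^+$} (taking $B^+=B_{\kappa r}^+(x_0)$ when $\kappa r<R$ and $B^+=B_R^+(x_1)$ otherwise); you instead freeze directly at the spatial average $\bar A^{\alpha\beta}(t)$ over that same ball. The two devices yield the same bound after the $L_\infty$--$L_1$ interpolation $|A-\bar A|^{2\nu}\le(2K)^{2\nu-1}|A-\bar A|$ and the definition of $\mathrm{osc}_x$.

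Two small remarks on your write-up: (i) in the large-scale regime $\kappa r>R$, the role of the support of $u$ is not so much a ``geometric smallness'' factor as it is the reason you may restrict the H\"older inequality to $Q_R^+(X_1)$, where $\mathrm{osc}_x(A^{\alpha\beta},Q_R^+(X_1))\le A_R^\#$ is again available; the measure ratio $|Q_R^+(X_1)|^{1/\nu}/|Q_{\kappa r}^+(X_0)|^{1/\nu}\le N$ then converts back to averages over $Q_{\kappa r}^+(X_0)$; (ii) the $f_\alpha$ are not assumed compactly supported, but this plays no role since the $f_\alpha$ contribution already appears as $(|f_\alpha|^2)^{1/2}_{Q_{\kappa r}^+(X_0)}$ without any modification.
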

\begin{proof}
Fix a $y\in \overline{\bR^{d}_+}$ and set $$\l_y u=\sum_{|\alpha|=|\beta|=m}D^\alpha (A^{\alpha\beta} (t,y)D^\beta u(t,x)).$$ Then we have
\begin{equation*}
u_t+(-1)^m\l_y u+\lambda u=\sum_{|\alpha|\le m}D^\alpha \tilde{f}_\alpha,
\end{equation*}
where

\begin{align*}
&\tilde{f}_\alpha=f_\alpha+(-1)^m\sum_{|\beta|=m}(A^{\alpha\beta}(t,y)-A^{\alpha\beta}(t,x))D^\beta u\quad \text{when}\quad |\alpha|=m,\\
&\tilde{f}_\alpha=f_\alpha\quad \text{otherwise}.
\end{align*}
It follows from Proposition \ref{fund proposition} that
\begin{align}
&(|D_{x^\prime}D^{m-1} u-(D_{x^\prime}D^{m-1} u)_{Q_r^+{(X_0)}}|)_{Q_r^+(X_0)}+\lambda^{\frac{1}{2}}(|u-(u)_{Q_r^+(X_0)}|)_{Q_r^+(X_0)}\nonumber\\
&\le N\kappa^{-\gamma}\sum_{k=0}^m\lambda^{\frac{1}{2}-\frac{k}{2m}}(|D^k u|^2)_{Q_{\kappa r}^+(X_0)}^\frac{1}{2}+N\kappa^{m+\frac{d}{2}}\sum_{|\alpha|\le m}\lambda^{\frac{|\alpha|}{2m}-\frac{1}{2}}(|\tilde{f}_\alpha|^2)^{\frac{1}{2}}_{Q_{\kappa r}^+(X_0)}\label{important 1}.
\end{align}
Note that
\begin{equation}
\int_{Q_{\kappa r}^+(X_0)}|\tilde{f}_\alpha|^2\,dx\,dt\le N\int_{Q_{\kappa r}^+(X_0)}|f_\alpha|^2\,dxdt+NI_y,\label{important 2}
\end{equation}
where, for $|\alpha|=m$,
\begin{equation*}
I_y=\int_{Q_{\kappa r}^+(X_0)}|(A^{\alpha\beta}(t,y)-A^{\alpha\beta}(t,x))D^\beta u|^2\,dx\,dt.
\end{equation*}
Denote $B^+$ to be $B^+_{\kappa r}(x_0)$ if $\kappa r<R$, or to be $B_R^+(x_1)$ otherwise; denote  $Q^+$ in the same fashion. Now we take the average of $I_y$ with respect to $y$ in $B^+$. Since $u$ vanishes outside $Q_R^+(X_1)$, by H\"{o}lder's  inequality we get
\begin{align*}
&\dashint_{B^+}I_y\, dy=\dashint_{B^+}\int_{Q^+_{\kappa r}(X_0)\cap Q_R^+(X_1)}|(A^{\alpha\beta}(t,y)-A^{\alpha\beta}(t,x))D^\beta u|^2\,dx\,dt\,dy\\
&\le \dashint_{B^+}\big(\int_{Q^+}|A^{\alpha\beta}(t,y)-A^{\alpha\beta}(t,x)|^{2\nu}
\,dx\,dt\big)^{\frac{1}{\nu}}\,dy \\
&\quad\cdot \big(\int_{Q^+_{\kappa r}(X_0)\cap Q_R^+(X_1)}|D^m u|^{2\xi}\,dx\,dt\big)^{\frac{1}{\xi}},
\end{align*}
where, by the boundedness of $A^{\alpha\beta}$, H\"older's inequality as well as the definition of $\text{osc}_x$ and $A^\#_R$, the integral over $B^+$ in the last term above is bounded by a constant times
\begin{align*}
&\dashint_{B^+}(\int_{Q^+}|A^{\alpha\beta}(t,y)-A^{\alpha\beta}(t,x)|^{2\nu}\,dx\,dt)^{\frac{1}{\nu}}\,dy\\
 &\le N(\dashint_{B^+}\int_{Q^+}|A^{\alpha\beta}(t,y)-A^{\alpha\beta}(t,x)|\,dx\,dt\,dy)^{\frac{1}{\nu}}\\
&\le N (|Q^+|\text{osc}_x(A^{\alpha\beta},Q^+))^{\frac{1}{\nu}}\le N((\kappa r)^{2m+d}A^\#_R)^{\frac{1}{\nu}}.
\end{align*}
This together with \eqref{important 1} and \eqref{important 2} completes the proof of the lemma.
\end{proof}
\section{Proof of Theorem \ref{theorem tan}}
\begin{proof}[Proof of Theorem \ref{theorem tan}]
First note that the interior estimates are obtained in Theorem 1 {of} \cite{DK10}. With the standard arguments of partition of the unity and flattening the boundary, it suffices to consider $\Omega=\bR_+^d$.

We may assume that all the lower-order coefficients are zero. {Indeed, if we got the a priori estimate without lower-order terms, for general systems, we can move lower-order terms to the right-hand side and apply the interpolation inequality. Taking $\lambda$ large enough, we then obtain the estimate for general systems.}

Case 1: $p\in(2,\infty)$. First we suppose $T=\infty$ and $u\in C^{\infty}(\overline{\o_\infty^+})$ and vanish{es} on $\o_\infty^+ \backslash \tilde{Q}_{R_0}^+(X_1)$ for some $X_1 \in \overline{\o_\infty^+} $, where $ \tilde{Q}_{R_0}^+(X_1)=\{(\mu^{-2m}t, x^\prime, \mu^{-1}x_d): (t,x)\in Q_{R_0}^+(X_1)\}$ and $\mu\ge1$ is a parameter which will be determined later. Then it follows $\tilde{Q}_{R_0}^+(X_1)\subset Q_{R_0}^+(\tilde{X}_1)$, where $\tilde{X_1}=(\mu^{-2m}t_1, x_1^\prime, \mu^{-1}{x_1}_d)$.  Choose $\xi>1$ such that $2\xi<p$ and {fix $\gamma\in (0,1)$}. Under these assumptions, from Lemma \ref{important tan lemma} we easily deduce
\begin{align*}
(D_{x^\prime}D^{m-1}u)^\# (X_0)+\lambda^\.5 u^\#(X_0)\le N \kappa^{-\gamma}\sum_{k=0}^m \lambda^{\.5-\frac{k}{2m}}(M(D^k u)^2(X_0))^\.5\\
+N \kappa^{m+\frac{d}{2}} \bigg( \sum_{|\alpha|\le m}\lambda^{\frac{|\alpha|}{2m}-\.5}(M(f_\alpha)^2(X_0))^\.5 +\rho^{\frac{1}{2\nu}}(M(D^m u)^{2\xi}(X_0))^{\frac{1}{2\xi}}  \bigg)
\end{align*}
for any $\kappa\ge128$ and $X_0\in \overline{\o_\infty^+}$.
This, together with the Fefferman--Stein theorem and the Hardy--Littlewood maximal function theorem, yields
\begin{align}
&\|D_{x^\prime}D^{m-1} u\|_{L_p}+\lambda^\.5\|u\|_{L_p}\le  {N(\|(D_{x^\prime}D^{m-1} u)^\#\|_{L_p}+\lambda^\.5\|u^\#\|_{L_p})\nonumber}\\
&\le {N\kappa^{-\gamma}\sum_{k=0}^m \lambda^{\.5-\frac{k}{2m}}\|(M(D^k u)^2)^\.5\|_{L_p}\nonumber } \\
&\,\, +N \kappa^{m+\frac{d}{2}} \big( \sum_{|\alpha|\le m}\lambda^{\frac{|\alpha|}{2m}-\.5}\|(M(f_\alpha)^2)^\.5\|_{L_p} +\rho^{\frac{1}{2\nu}}\|(M(D^m u)^{2\xi})^{\frac{1}{2\xi}}\|_{L_p}\big)\nonumber\\
&\le N \kappa^{m+\frac{d}{2}}\sum_{|\alpha|\le m}\lambda^{\frac{|\alpha|}{2m}-\.5}\|f_\alpha\|_{L_p} +N(\kappa^{-\gamma}+\kappa^{m+\frac{d}{2}}\rho^{\frac{1}{2\nu}})\sum_{k=0}^m \lambda^{\.5-\frac{k}{2m}}\|D^k u\|_{L_p}\label{lp tan}
\end{align}
for any $\kappa\ge 128$, where $L_p=L_p(\o_\infty^+)$.

Now we use the arguments of freezing the coefficients and scaling to get the estimate of $D_d^m u$. As in Lemma  \ref{nor and tan}, let $$v(t,x^\prime, x_d)=u(\mu^{-2m}t, x^\prime, \mu^{-1} x_d),$$ $$\tilde{A}^{\alpha\beta}(t,x^\prime,x_d)=A^{\alpha\beta}(\mu^{-2m}t, x^\prime,\mu^{-1}x_d),$$  $$\tilde{f}_\alpha(t,x^\prime,x_d)=f_\alpha(\mu^{-2m}t, x^\prime, \mu^{-1}x_d).$$ Then $v$ satisfies
\begin{align*}
&v_t+(-1)^m D_d^m(\tilde{A}^{\hat{\alpha}\hat{\alpha}}D^m_d v)+(-1)^{m}\sum_{^{|\alpha|=|\beta|=m}_{(\alpha,\beta)\neq (\hat{\alpha},\hat{\alpha})}}\mu^{\alpha_d+\beta_d-2m}D^\alpha(\tilde{A}^{\alpha\beta}D^\beta v)\\&\quad
+\lambda\mu^{{-2m}}v=\sum_{|\alpha|\le m}\mu^{\alpha_d-2m}D^\alpha f_\alpha
\end{align*}
in $\o_\infty^+$ with the conormal derivative boundary condition on $\{x_d=0\}$.
We fix $(t,y)\in \overline{\o_\infty^+}$ and move all spacial derivatives to the right-hand side of the equation, then add $(-1)^m (\cD_{d-1}^m v+D_d^m (
{\tilde{A}^{\hat{\alpha}\hat{\alpha}}(t,y)}D_d^m v))$ to both sides of the equation so that
\begin{align*}
&v_t+(-1)^m( D_d^m(\tilde{A}^{\hat{\alpha}\hat{\alpha}}(t,y)D^m_dv)+\cD_{d-1}^m v)+\lambda\mu^{-2m}v\\
&=D^\alpha \hat{f}_\alpha+(-1)^m D^m_d\big((\tilde{A}^{\hat{\alpha}\hat{\alpha}}(t,y)-\tilde{A}^{\hat{\alpha}\hat{\alpha}}(t,x))D^m_dv\big)+(-1)^m\cD_{d-1}^m v,
\end{align*}
where $\hat{f}_\alpha=\mu^{\alpha_d-2m}\tilde{f}_\alpha$ for $|\alpha|<m$,
$$\hat{f}_\alpha=\mu^{\alpha_d-2m}\tilde{f}_\alpha+ \sum_{{|\beta|=m}}(-1)^{m+1}\mu^{\alpha_d+\beta_d-2m} \tilde{A}^{\alpha\beta}(t,x)D^\beta v$$ {for} $|\alpha|=m$ but $\alpha\neq \hat{\alpha}$, and
$$\hat{f}_{\hat{\alpha}}=\mu^{-m}\tilde{f}_{\hat{\alpha}}+ \sum_{^{\beta\neq\hat{\alpha}}_{|\beta|=m}}(-1)^{m+1}\mu^{\beta_d-m} \tilde{A}^{\hat{\alpha}\beta}(t,x)D^\beta v.$$
We follow the proof of Lemma \ref{important tan lemma}. From \eqref{mean oscillation special1}, we know that
\begin{align*}
&(|D_d^m v-(D_d^m v)_{Q_r^+(X_0)}|)_{Q_r^+(X_0)}\nonumber\\
&\le N{\kappa_1}^{-1}(|D_d^m v|^2)^{\frac{1}{2}}_{Q_{\kappa_1 r}^+(X_0)}+N\kappa_1^{m+\frac{d}{2}}\Big(\sum_{|\alpha|\le m}\lambda^{\frac{|\alpha|}{2m}-\frac{1}{2}}(|\hat{f}_\alpha|^2)^{\frac{1}{2}}_{Q^+_{\kappa_1 r}(X_0)}\\
&\quad+(|(\tilde{A}^{\hat{\alpha}\hat{\alpha}}(t,y)
-\tilde{A}^{\hat{\alpha}\hat{\alpha}}(t,x))D^m_dv|^2)^{\.5}_{Q^+_{\kappa_1 r}(X_0)}+(|D_{x^\prime}^{m}v|^2)^\.5_{Q^+_{\kappa_1 r}(X_0)}\Big),
\end{align*}
for any $X_0\in \overline{\o_\infty^+}$ and $\kappa_1\ge 64$. It is easy to check that $\tilde{A}^{\alpha\beta}$ satisfies the Assumption \ref{main assumption}$(2\mu^{2m+1}\rho)$ with the same $R_0$ as $A^{\alpha\beta}$ and the support of $v$ is contained in $Q_{R_0}^+({X_1})$. Therefore, applying the same argument {as} in Lemma \ref{important tan lemma}, we obtain
\begin{align*}
&(|D_d^m v-(D_d^m v)_{Q_r^+(X_0)}|)_{Q_r^+(X_0)}\nonumber\\
&\le N{\kappa_1}^{-1}(|D_d^m v|^2)^{\frac{1}{2}}_{Q_{\kappa_1 r}^+(X_0)}+N\kappa_1^{m+\frac{d}{2}}\sum_{|\alpha|\le m}\lambda^{\frac{|\alpha|}{2m}-\frac{1}{2}}(|\hat{f}_\alpha|^2)^{\frac{1}{2}}_{Q^+_{\kappa_1 r}(X_0)}\\
&+N\kappa_1^{m+\frac{d}{2}} (2\mu^{2m+1}\rho)^{\frac{1}{2\nu}}(|D_d^m v|^{2\xi})^{\frac{1}{2\xi}}_{Q_{\kappa_1 r}^+(X_0)}+N\kappa_1^{m+\frac{d}{2}}(|D_{x^\prime}^{m}v|^2)^\.5_{Q^+_{\kappa_1 r}(X_0)},
\end{align*}
where $1/\xi+1/\nu=1$. By the definition of $\hat{f}_\alpha$, the right-hand side of the inequality above can be bounded by
\begin{align*}
&N{\kappa_1}^{-1}(|D_d^m v|^2)^{\frac{1}{2}}_{Q_{\kappa_1 r}^+(X_0)}+N\kappa_1^{m+\frac{d}{2}} (2\mu^{2m+1}\rho)^{\frac{1}{2\nu}}(|D_d^m v|^{2\xi})^{\frac{1}{2\xi}}_{Q_{\kappa_1 r}^+(X_0)}\\
&{+}N\kappa_1^{m+\frac{d}{2}}\sum_{|\alpha|\le m}\lambda^{\frac{|\alpha|}{2m}-\.5}\mu^{\alpha_d-2m}(|\tilde{f}_\alpha|^2)^\.5_{Q_{\kappa_1 r}^+(X_0)}+N\kappa_1^{m+\frac{d}{2}}\mu^{-1}(|D_d^m v|^2)^\.5_{Q_{\kappa_1 r}^+(X_0)}\\
&+N\kappa_1^{m+\frac{d}{2}}(|D_{x^\prime}D^{m-1}v|^2)^\.5_{Q_{\kappa_1 r}^+(X_0)}
\end{align*}
provided that $\mu\ge 1$. Therefore, we obtain that, for any $X_0\in \overline{\mathcal O^+_\infty}$,
\begin{align*}
&(D_d^m v)^\#{(X_0)}\\
&\le N{\kappa_1}^{-1}(M(D_d^m v)^2(X_0))^\.5+N\kappa_1^{m+\frac{d}{2}} (2\mu^{2m+1}\rho)^{\frac{1}{2\nu}}(M(D_d^m v)^{2\xi}(X_0))^{\frac{1}{2\xi}}\\
&\quad+N\kappa_1^{m+\frac{d}{2}}\sum_{|\alpha|\le m}\lambda^{\frac{|\alpha|}{2m}-\.5}\mu^{\alpha_d-2m}(M(\tilde{f_\alpha})^2(X_0))^\.5\\
&\quad+N\kappa_1^{m+\frac{d}{2}}\mu^{-1}(M(D_d^m v)^2(X_0))^\.5+N\kappa_1^{m+\frac{d}{2}} (M(D_{x^\prime}D^{m-1}v)^2(X_0))^\.5.
\end{align*}
We denote $L_p=L_p(\o_\infty^+)$. By the Fefferman--Stein theorem, the Hardy--Littlewood maximal function theorem, and choosing $\xi>1, 2\xi<p$, we get
\begin{align*}
&\|D_d^m v\|_{L_p}\le N\|(D_d^m v)^\#\|_{L_p}\\
&\le N{\kappa_1}^{-1}\|(M(D_d^m v)^2)^\.5\|_{L_p}+N\kappa_1^{m+\frac{d}{2}} (2\mu^{2m+1}\rho)^{\frac{1}{2\nu}}\|(M(D_d^m v)^{2\xi})^{\frac{1}{2\xi}}\|_{L_p}\\
&\quad +N\kappa_1^{m+\frac{d}{2}}\sum_{|\alpha|\le m}\lambda^{\frac{|\alpha|}{2m}-\.5}\mu^{\alpha_d-2m}\|(M(\tilde{f_\alpha})^2)^\.5\|_{L_p}\\
&\quad +N\kappa_1^{m+\frac{d}{2}}\mu^{-1}\|(M(D_d^m v)^2)^\.5\|_{L_p}+N\kappa_1^{m+\frac{d}{2}}\| (M(D_{x^\prime}D^{m-1}v)^2(X_0))^\.5\|_{L_p}\\
&\le  N{\kappa_1}^{-1}\|D_d^mv\|_{L_p}+N\kappa_1^{m+\frac{d}{2}} (2\mu^{2m+1}\rho)^{\frac{1}{2\nu}}\|D_d^m v\|_{L_p}\\
&\quad +N\kappa_1^{m+\frac{d}{2}}\sum_{|\alpha|\le m}\lambda^{\frac{|\alpha|}{2m}-\.5}\mu^{\alpha_d-2m}\|\tilde{f}_\alpha\|_{L_p}+N\kappa_1^{m+\frac{d}{2}}\mu^{-1}\|D_d^m v\|_{L_p}\\
&\quad +N\kappa_1^{m+\frac{d}{2}}\| D_{x^\prime}D^{m-1}v\|_{L_p}.
\end{align*}
We first let $\kappa_1$ be sufficiently large and then $\mu$ be sufficiently large. Finally let $\rho$ be sufficiently small and we obtain
\begin{align*}
\|D_d^m v\|_{L_p}\le N\sum_{|\alpha|\le m}\lambda^{\frac{|\alpha|}{2m}-\.5}\|\tilde{f_\alpha}\|_{L_p}+N\| D_{x^\prime}D^{m-1}v\|_{L_p}.
\end{align*}
After changing back to $u$ and $f_\alpha$, we get
\begin{equation*}
\|D_d^m u\|_{L_p}\le N\sum_{|\alpha|\le m}\lambda^{\frac{|\alpha|}{2m}-\.5}\|{f_\alpha}\|_{L_p}+N\| D_{x^\prime}D^{m-1}u\|_{L_p}.
\end{equation*}
Combining the inequality above with \eqref{lp tan}, we know that
\begin{align*}
&\sum_{k=0}^m\lambda^{\frac{1}{2}-\frac{k}{2m}}\|D^k u\|_{L_p}\le N\|D^m u\|_{L_p}+N\lambda^{\.5}\|u\|_{L_p}\\
&\le N\|D_{x^\prime}D^{m-1} u\|_{L_p}+N\lambda^\.5\|u\|_{L_p}+N\|D_d^m u\|_{L_p}\\
&\le  N \kappa^{m+\frac{d}{2}}\sum_{|\alpha|\le m}\lambda^{\frac{|\alpha|}{2m}-\.5}\|f_\alpha\|_{L_p}+N(\kappa^{-\gamma}+\kappa^{m+\frac{d}{2}}\rho^{\frac{1}{2\nu}})\sum_{k=0}^m \lambda^{\.5-\frac{k}{2m}}\|D^k u\|_{L_p}.
\end{align*}
We take $\kappa$ sufficiently large and $\rho$ sufficiently small so that the terms involving $u$ on the right-hand side are absorbed in the left-hand side. In this way, we prove the desired estimate.

Due to the argument of partition of the unity, we can remove the assumption that $u$ vanish{es} on $\o_\infty^+ \backslash \tilde{Q}_{R_0}^+(X_1)$ for some $X_1 \in \overline{\o_\infty^+}$ by choosing $\lambda_0$ large enough. The extension to $T\in (-\infty,\infty]$ is by now standard, see \cite{Kry07}.

Case 2: $p\in(1,2)$. Since the system is in the divergence form, this case follows from the previous case by using the duality argument.

Case 3: $p=2$.  This case is classical.
\end{proof}

\section{Schauder estimates for systems on the half space}

In this section, we prove the Schauder estimates for \eqref{the system1} on the half space.  The following lemmas are useful in our proof. {The first one is well known and  the proof can be found in \cite{Gia93}.}
\begin{lemma}\label{tech}
Let $\Phi$ be a nonnegative, nondecreasing function on $(0,r_0]$ such that
\begin{equation*}
\Phi(\rho)\le A(\frac{\rho}{r})^a\Phi(r)+Br^b
\end{equation*}
for $0<\rho<r\le r_0$, where $0<b<a$ are fixed constants. Then
\begin{equation*}
\Phi(r)\le Nr^b(r_0^{-b}\Phi(r_0)+B)\quad \forall\, r \in (0,r_0)
\end{equation*}
with a constant $N=N(A,a,b)$.
\end{lemma}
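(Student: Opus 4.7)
The plan is to prove this by the classical dyadic iteration argument. The key observation is that although the right-hand side of the hypothesis has a multiplicative factor $A$ on the first term which is not necessarily small, the $(\rho/r)^a$ gives us a small factor once we take $\rho$ a fixed small fraction of $r$; this can beat any prescribed power $(\rho/r)^{b}$ because $b<a$.

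Concretely, I would begin by fixing $\tau\in(0,1)$ so small that $A\tau^{a-b}\le 1/2$. Setting $\rho=\tau r$ in the hypothesis then gives
\begin{equation*}
\Phi(\tau r)\le \tfrac{1}{2}\tau^{b}\Phi(r)+B r^{b},\qquad 0<r\le r_0.
\end{equation*}
Next I would iterate this inequality: applying it with $r$ replaced by $\tau^{k-1}r_0$ for $k=1,2,\ldots$ and using induction shows
\begin{equation*}
\Phi(\tau^{k}r_0)\le 2^{-k}\tau^{kb}\Phi(r_0)+B r_0^{b}\tau^{(k-1)b}\sum_{j=0}^{k-1}2^{-j},
\end{equation*}
where the key point is that the factor $1/2$ absorbed exactly $\tau^b$, so the geometric sum produces no growth in $k$ beyond the overall $\tau^{(k-1)b}$. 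Bounding the geometric sum by $2$ yields $\Phi(\tau^{k}r_0)\le \tau^{kb}\bigl(\Phi(r_0)+2B r_0^{b}\tau^{-b}\bigr)$.

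Finally, for arbitrary $r\in(0,r_0)$, I would choose the unique integer $k\ge 0$ with $\tau^{k+1}r_0<r\le \tau^{k}r_0$ and use monotonicity of $\Phi$ together with the bound on $\Phi(\tau^{k}r_0)$ above; converting $\tau^{kb}$ to $(r/r_0)^{b}$ costs only the constant factor $\tau^{-b}$. This gives the desired estimate with $N=N(A,a,b)$.

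There is no real obstacle here; the only minor care needed is choosing $\tau$ correctly so the geometric iteration closes (i.e., the effective multiplier on $\Phi(r)$ after one step is at most $\tau^b/2$, strictly less than $\tau^b$), and handling the interpolation between consecutive dyadic scales $\tau^k r_0$ by monotonicity rather than by the hypothesis itself.
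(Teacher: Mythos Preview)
Your argument is correct and is precisely the standard dyadic iteration proof of this lemma; the paper does not give its own proof but simply cites Giaquinta \cite{Gia93}, where exactly this argument appears. There is nothing to add.
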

 The following version of Campanato's theorem can be found in \cite{Gia93} and \cite{Sh96}.
\begin{lemma}\label{holder}
(i) Let $f\in L_2(Q_2)$ and $a\in(0,1]$. Assume that
\begin{equation}
\dashint_{Q_r(t_0,x_0)}|f-(f)_{Q_r(t_0,x_0)}|^2\,dx\,dt\le A^2r^{2a},\quad {\forall\,} (t_0,x_0)\in Q_1,\label{holder1}
\end{equation}
and any $0<r \le 1$. Then we have $f \in C^{\frac{a}{2m},a}(Q_1)$ and
\begin{equation*}
[f]_{\frac{a}{2m},a,Q_1}\le NA,
\end{equation*}
with $N=N(d)$.

(ii) Let $f\in L_2(Q_2^+)$ and $a\in(0,1]$. Assume that \eqref{holder1} holds for $r<{x_0}_d$. Moreover
\begin{equation*}
\dashint_{Q_r^+(t_1,x_1)}|f-(f)_{Q_r^+(t_1,x_1)}|^2\,dx\,dt\le A^2r^{2a},\quad {\forall\,(t_1,x_1) \in \{x_d=0\}\cap Q_1},
\end{equation*}
and any $0<r\le 1$. Then we have $f \in C^{\frac{a}{2m},a}(Q_1^+)$ and
\begin{equation*}
[f]_{\frac{a}{2m},a,Q^+_1}\le NA,
\end{equation*}
with $N=N(d)$.
\end{lemma}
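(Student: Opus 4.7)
The plan is to use the standard Campanato iteration adapted to the parabolic scaling of the paper. First I would fix $X_0 \in Q_1$ and compare averages on dyadic parabolic cylinders. Since $Q_{r/2}(X_0) \subset Q_r(X_0)$ with volume ratio $2^{d+2m}$, the Cauchy--Schwarz inequality together with the hypothesis gives
\begin{equation*}
|(f)_{Q_{r/2}(X_0)} - (f)_{Q_r(X_0)}|^2 \le \dashint_{Q_{r/2}(X_0)}|f - (f)_{Q_r(X_0)}|^2\,dx\,dt \le 2^{d+2m}\,A^2 r^{2a}.
\end{equation*}
Iterating with $r=2^{-k}$ and summing the resulting geometric series (which converges because $a>0$) shows that $\{(f)_{Q_{2^{-k}}(X_0)}\}_{k\ge 0}$ is Cauchy; let $f^{\star}(X_0)$ denote its limit. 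The same summation gives $|(f)_{Q_r(X_0)} - f^{\star}(X_0)| \le N A r^a$ for every $0<r\le 1$, with $N$ depending only on $d$ and $m$. By the Lebesgue differentiation theorem $f^{\star}=f$ almost everywhere, so I would work with this pointwise representative.

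Next I would compare values at two distinct points. Given $X_0=(t_0,x_0)$, $Y_0=(s_0,y_0)\in Q_1$, set $\rho=|t_0-s_0|^{1/(2m)}+|x_0-y_0|$ and $r=4\rho$. Then $Q_r(X_0)\subset Q_{2r}(Y_0)$, and by the triangle inequality
\begin{equation*}
|f(X_0)-f(Y_0)|\le |f(X_0)-(f)_{Q_r(X_0)}|+|(f)_{Q_r(X_0)}-(f)_{Q_{2r}(Y_0)}|+|(f)_{Q_{2r}(Y_0)}-f(Y_0)|.
\end{equation*}
The outer terms are bounded by $N A r^a$ by the previous paragraph, while the middle term is estimated by the same Cauchy--Schwarz trick (integrating $f-(f)_{Q_{2r}(Y_0)}$ over $Q_r(X_0)$ and enlarging to $Q_{2r}(Y_0)$), which again produces $NA r^a$. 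Hence $|f(X_0)-f(Y_0)|\le N A \rho^a$, i.e., $[f]_{a/(2m),a,Q_1}\le N A$, which is (i).

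For (ii) I would run the same iteration but split the scales at the distance to the boundary. For $X_0=(t_0,x_0)\in Q_1^+$ let $h=(x_0)_d$ and let $\tilde X_0=(t_0,x_0',0)$ be its foot on $\{x_d=0\}$. For scales $r<h/2$ the cylinder $Q_r(X_0)$ is contained in $Q_2^+$ and the interior hypothesis applies exactly as before. For scales $r\ge h/2$ one has $Q_r^+(X_0)\subset Q_{2r}^+(\tilde X_0)\subset Q_{4r}^+(X_0)$, so the interior Cauchy--Schwarz comparison is replaced by a comparison between $(f)_{Q_r^+(X_0)}$ and $(f)_{Q_{2r}^+(\tilde X_0)}$, for which the assumed boundary decay directly applies. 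Stitching the two dyadic chains at the transition scale $r\approx h$ produces a pointwise limit with $|(f)_{Q_r^+(X_0)} - f^{\star}(X_0)| \le N A r^a$ uniformly in $h$. The two-point comparison then proceeds exactly as in (i), using either a full cylinder or a half cylinder (around a foot point on $\{x_d=0\}$) as a common container, depending on whether both points are well inside $Q_1^+$ or at least one is close to the boundary.

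The main obstacle will be the boundary case (ii), specifically verifying that the two dyadic chains glue together cleanly at $r\approx h$: one needs a comparability bound of the form $|(f)_{Q_{h/2}(X_0)} - (f)_{Q_h^+(\tilde X_0)}| \le N A h^a$ uniform in $h\in (0,1]$, so that the $r^a$ rate is not lost in the transition. Because $Q_{h/2}(X_0)$ and $Q_h^+(\tilde X_0)$ are both contained in $Q_{Nh}^+(\tilde X_0)$ with comparable volumes, this comparability follows from one more application of Cauchy--Schwarz against the hypothesis on half cylinders at $\tilde X_0$. Once this step is in place, the rest is routine Campanato bookkeeping and yields the claimed estimates with $N=N(d)$ (implicitly also depending on $m$ through the parabolic scaling).
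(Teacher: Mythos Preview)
Your argument is the standard Campanato iteration adapted to the anisotropic parabolic metric, and it is correct. The paper itself does not prove this lemma at all: it simply records that ``the following version of Campanato's theorem can be found in \cite{Gia93} and \cite{Sh96}'' and moves on, so there is no paper proof to compare against. Your write-up supplies exactly the details one would find in those references, including the correct handling of the boundary case via splitting scales at $r\approx h=(x_0)_d$ and gluing the interior dyadic chain to the half-cylinder chain centered at the foot point. One small remark: the constant $N$ necessarily depends on $m$ through the volume ratio $|Q_r|/|Q_{r/2}|=2^{d+2m}$ and the containment constants you use; you already flag this at the end, and the paper's ``$N=N(d)$'' should be read with $m$ fixed throughout.
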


\subsection{The estimate of $D_{x^\prime}D_d^{m-1}u$}
\subsubsection{Systems with coefficients depending only on $t$}
In this part, we consider
\begin{equation}
u_t+(-1)^m \l_0 u=0 \quad \text{in } \quad Q_{2R}^+\label{simple coefficients}
\end{equation}
with the conormal derivative boundary condition on $\{x_d=0\}\cap Q_{2R}$. First, we have the following mean oscillation estimate.

\begin{lemma}\label{fund schauder}
Assume that $u\in C_{loc}^\infty(\overline{{\o_\infty^+}})$ and satisfies \eqref{simple coefficients} with the  conormal derivative boundary condition on $\{x_d=0\}$. Then for any $0<r<R<\infty$ and $\gamma\in (0,1)$, there exists a constant $N=N(d,m,n,\delta,K,\gamma)$ such that
\begin{align}
&\int_{Q_{r}^+(X_0)}|D^{m-1}u-(D^{m-1}u)_{Q_r^+(X_0)}|^2\,dx\,dt\nonumber\\
&\le N(\frac{r}{R})^{2\gamma+2m+d}\int_{Q_{R}^+(X_0)}|D^{m-1}u-(D^{m-1}u)_{Q_R^+(X_0)}|^2\,dx\,dt\label{mix schauder}
\end{align}
where $X_0\in \{x_d=0\}\cap Q_R$.
\end{lemma}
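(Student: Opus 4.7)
The plan is to combine the interior-type H\"older regularity of $D^{m-1}u$ from Lemma \ref{regularity lemma} with the polynomial subtraction in Lemma \ref{tech 3}, converting a pointwise H\"older bound into the desired mean oscillation bound.

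First I would reduce to the normalized setting. Since $X_0=(t_0,x_0)$ with $(x_0)_d=0$, translating in $(t,x')$ preserves both the system and the boundary, while the parabolic dilation $\tilde u(t,x)=u(t_0+(R/2)^{2m}t,\,x_0+(R/2)x)$ turns $Q_R^+(X_0)$ into $Q_2^+$ and yields a system of the same form with coefficients $A^{\alpha\beta}((R/2)^{2m}t)$ still depending only on $t$. Hence it suffices to prove the estimate at $R=2$, $X_0=0$; and for $r\in[1,2)$ the bound is trivial by monotonicity of the integral (absorbing a constant into $N$), so the real work concerns $r\in(0,1)$.

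Next I would introduce the polynomial $P(x)$ of degree at most $m-1$ uniquely determined by $(D^\alpha P)_{Q_2^+}=(D^\alpha u)_{Q_2^+}$ for every $|\alpha|\le m-1$, and set $w:=u-P$. Because $P$ is independent of $t$ and $D^\beta P\equiv 0$ for $|\beta|=m$, a direct check against $C_0^\infty$ test functions shows that the bulk integrand $A^{\alpha\beta}D^\beta P\,D^\alpha\phi$ vanishes and $\int P\phi_t\,dt=0$, so $w$ solves the same homogeneous system on $Q_2^+$ with the same conormal boundary condition on $\{x_d=0\}\cap Q_2$. Applying Lemma \ref{regularity lemma} to $w$ would give
$$[D^{m-1}w]_{\frac{\gamma}{2m},\gamma,Q_1^+}\le N\,\|w\|_{L_2(Q_2^+)},$$
while Lemma \ref{tech 3} (the second assertion with $f_\alpha\equiv 0$, $|\alpha|=0$, $|\beta|=m-1$) would yield $\|w\|_{L_2(Q_2^+)}\le N\,\|D^{m-1}w\|_{L_2(Q_2^+)}$. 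By the construction of $P$, $D^{m-1}P$ is a constant equal componentwise to $(D^{m-1}u)_{Q_2^+}$, so $D^{m-1}w=D^{m-1}u-(D^{m-1}u)_{Q_2^+}$ and the H\"older seminorm of $D^{m-1}w$ on $Q_1^+$ coincides with that of $D^{m-1}u$. Combining the above,
$$[D^{m-1}u]^2_{\frac{\gamma}{2m},\gamma,Q_1^+}\le N\int_{Q_2^+}|D^{m-1}u-(D^{m-1}u)_{Q_2^+}|^2\,dx\,dt.$$

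Finally, for $r\in(0,1)$ the parabolic H\"older seminorm bounds the oscillation of $D^{m-1}u$ on $Q_r^+$ by $Nr^{\gamma}[D^{m-1}u]_{\gamma/2m,\gamma,Q_1^+}$; squaring and integrating over $Q_r^+$, whose measure is of order $r^{2m+d}$, gives
$$\int_{Q_r^+}|D^{m-1}u-(D^{m-1}u)_{Q_r^+}|^2\,dx\,dt\le Nr^{2\gamma+2m+d}\,[D^{m-1}u]^2_{\frac{\gamma}{2m},\gamma,Q_1^+}.$$
Combining with the previous display and undoing the dilation produces \eqref{mix schauder}. The most delicate point, and the step I expect to require the most care, is verifying that subtracting $P$ does not destroy the conormal boundary condition; this relies essentially on $P$ being $t$-independent, so the time boundary term vanishes, and on $D^\beta P=0$ for $|\beta|=m$, so the leading bulk term in the weak formulation is unaffected.
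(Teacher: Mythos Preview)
Your proposal is correct and follows essentially the same route as the paper's own proof: normalize by translation and parabolic dilation, subtract the $(m-1)$st-order polynomial $P$ from Lemma~\ref{tech 3}, apply Lemma~\ref{regularity lemma} to $w=u-P$ to get a H\"older bound on $D^{m-1}w$, then convert $\|w\|_{L_2}$ to $\|D^{m-1}w\|_{L_2}$ via the second assertion of Lemma~\ref{tech 3} and use $D^{m-1}w=D^{m-1}u-(D^{m-1}u)_{Q_2^+}$. Your explicit verification that subtracting $P$ preserves the conormal boundary condition is a point the paper simply asserts, so your write-up is in fact slightly more careful there.
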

\begin{proof}
By scaling and translation of the coordinates, without loss of generality, we can assume $R=1$ and $ X_0=(0,0)$.

From the bootstrap argument in Lemma \ref{regularity lemma} and the Sobolev embedding theorem,  for any $p>0$, there exists a $N=N(d,p)$ so that
\begin{equation*}
\|u\|_{\h_p^m(Q_1^+)}\le N\|u\|_{L_2(Q_2^+)}.
\end{equation*}
By the parabolic Sobolev embedding theorem,
\begin{equation*}
[D^{m-1} u]_{\frac{\gamma}{2m},\gamma, Q_1^+}\le N\|u\|_{L_2(Q_2^+)},
\end{equation*}
where $\gamma\in (0,1)$.
Therefore,
\begin{equation*}
\int_{Q_{r}^+}|D^{m-1}u-(D^{m-1}u)_{Q_r^+}|^2\,dx\,dt\le Nr^{2\gamma+2m+d}\|u\|^2_{L_2(Q_1^+)}
\end{equation*}
provided $r\le 1/2$. Let $v$ be as in Lemma \ref{tech 3} and notice that $v$ satisfies the same system and boundary condition as $u$. So the inequality holds for $v$ as well. Due to Lemma \ref{tech 3},
\begin{equation*}
\int_{Q_{r}^+}|D^{m-1}v-(D^{m-1}v)_{Q_r^+}|^2\,dx\,dt\le Nr^{2\gamma+2m+d}\|D^{m-1}v\|^2_{L_2(Q_1^+)}.
\end{equation*}
Clearly the inequality above also holds true for $r>1/2$. Hence for any $r<1$,
\begin{align*}
&\int_{Q_{r}^+}|D^{m-1}u-(D^{m-1}u)_{Q_r^+}|^2\,dx\,dt\\
&\le Nr^{2\gamma+2m+d}\int_{Q_{1}^+}|D^{m-1}u-(D^{m-1}u)_{Q_1^+}|^2\,dx\,dt,
\end{align*}
because  $$D^{m-1} u-D^{m-1}v=D^{m-1}p=(D^{m-1}u)_{Q_1^+}.$$
The lemma is proved.
\end{proof}

\begin{lemma}\label{tan holder}
Assume that $u\in C_{loc}^{\infty}(\overline{{\o_\infty^+}})$ satisfies
\begin{equation}
\label{non homo simple} u_t+(-1)^m \l_0 u=\sum_{|\alpha|\le m}D^{\alpha}f_{\alpha} \quad \text{in} \quad Q_{4R}^+
\end{equation}
with the conormal derivative boundary condition on $\{x_d=0\}\cap Q_{4R}$ and  $f_\alpha \in C^{a\ast}$ if $|\alpha| =m$ for some $a\in (0,1)$, $f_\alpha \in L_\infty$ if $|\alpha|< m$.
Then for any $0<r<R \le 1$, $\gamma\in(0,1)$, and $X_0\in \{x_d=0\}\cap Q_R$, there exists a constant $N=N(n,m,d,\delta,\gamma,K)$ such that
\begin{align*}
&\int_{Q_r^+(X_0)}|D_{x^\prime}D^{m-1}u-(D_{x^\prime}D^{m-1}u)_{Q_r^+(X_0)}|^2\,dx\,dt \\&\le N (\frac{r}{R})^{2\gamma+2m+d} \int_{Q_R^+(X_0)}|D_{x^\prime}D^{m-1}u-(D_{x^\prime}D^{m-1}u)_{Q_R^+(X_0)}|^2\,dx\,dt\\
&\quad+N\sum_{|\alpha|=m}[f_\alpha]_{a, Q_{4R}^+} ^{\ast2} R^{2a+2m+d} +N\sum_{|\alpha|<m} \|f_\alpha\|_{L_\infty(Q_{4R}^+)}^2R^{2+2m+d}.
\end{align*}
\end{lemma}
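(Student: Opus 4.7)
The plan is a standard Campanato-style decomposition $u=v+w$ on $Q_R^+$, where $v$ is homogeneous and $w$ absorbs the localized, mean-adjusted forcing. By translation in $(t,x')$ assume $X_0=0$ and write $Q_r^+:=Q_r^+(0)$. Define
\[
g_\alpha=\big(f_\alpha-(f_\alpha)_{B_R^+}(t)\big)\chi_{Q_R^+}\ \text{if}\ |\alpha|=m,\qquad g_\alpha=f_\alpha\chi_{Q_R^+}\ \text{if}\ |\alpha|<m.
\]
Since $(f_\alpha)_{B_R^+}(t)$ depends only on $t$, $D^\alpha$ annihilates it for $|\alpha|=m$, so $\sum_\alpha D^\alpha g_\alpha=\sum_\alpha D^\alpha f_\alpha$ in the interior of $Q_R^+$.

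I introduce $w$ as a solution to $w_t+(-1)^m\l_0 w=\sum_{|\alpha|\le m}D^\alpha g_\alpha$ with the conormal boundary condition on $\{x_d=0\}$, taken as the zero-initial-data solution on a finite time cylinder containing $Q_R^+$; existence and the needed $L_2$-regularity follow from the same weak formulation as Theorem \ref{L_2}, which on a bounded time interval works at $\lambda=0$ for general $g_\alpha\in L_2$. Setting $v:=u-w$, the function $v$ satisfies $v_t+(-1)^m\l_0 v=0$ in $Q_R^+$ with the conormal boundary condition on $\{x_d=0\}\cap Q_R$. The heart of the argument is an $L_2$-estimate for $w$ via rescaling: under $(t,x)\mapsto(R^{2m}t,Rx)$ the problem on $Q_R^+$ becomes a unit-scale problem on $Q_1^+$ whose forcings have $L_2$-norms rescaled by $R^{2m-|\alpha|-(m+d/2)}$. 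Applying the standard unit-scale $L_2$-bound and scaling back gives
\[
\|D^m w\|_{L_2}^2\le N\sum_{|\alpha|\le m}R^{2(m-|\alpha|)}\|g_\alpha\|_{L_2(Q_R^+)}^2.
\]
The $|\alpha|=m$ contribution is $\|g_\alpha\|_{L_2}^2\le [f_\alpha]^{\ast\,2}_{a,Q_{4R}^+}R^{2a+2m+d}$ (the average subtraction in $g_\alpha$ is precisely what produces the Hölder-seminorm factor). For $|\alpha|<m$, since $|\alpha|\le m-1$ and $R\le 1$ the prefactor satisfies $R^{2(m-|\alpha|)}\le R^{2}$, and this combined with $\|g_\alpha\|_{L_2}^2\le\|f_\alpha\|_{L_\infty(Q_{4R}^+)}^2R^{2m+d}$ yields the desired $\|f_\alpha\|_{L_\infty}^2R^{2+2m+d}$.

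Because $\l_0$ has coefficients depending only on $t$, tangential differentiation commutes with $\l_0$ and preserves the conormal BC; hence for every $j<d$ the function $D_{x_j}v$ is again a smooth solution of the homogeneous system with the conormal BC on $\{x_d=0\}$. Applying Lemma \ref{fund schauder} to $D_{x_j}v$, noting that $D^{m-1}(D_{x_j}v)=D_{x_j}D^{m-1}v$, and summing over $j=1,\ldots,d-1$ produces
\[
\int_{Q_r^+}|D_{x'}D^{m-1}v-(D_{x'}D^{m-1}v)_{Q_r^+}|^2\le N(r/R)^{2\gamma+2m+d}\int_{Q_R^+}|D_{x'}D^{m-1}v-(D_{x'}D^{m-1}v)_{Q_R^+}|^2.
\]

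The final assembly is a triangle inequality: writing $u=v+w$, bounding $\int_{Q_r^+}|D_{x'}D^{m-1}w-(\cdot)|^2\le N\|D^m w\|_{L_2}^2$, and replacing $v$-averages by $u$-averages on the right at the cost of one more $N\|D^m w\|_{L_2}^2$, I combine the Campanato estimate for $v$ with the scaling estimate for $w$ to deliver the claim. The main technical obstacle is obtaining the $L_2$-estimate for $w$ with the correct $R^{2(m-|\alpha|)}$ prefactor — once this scaling is in place, the combination of $R\le 1$ and $m-|\alpha|\ge 1$ produces the extra factor of $R^2$ that is responsible for the $R^{2+2m+d}$ error in the lower-order terms.
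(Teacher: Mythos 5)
Your decomposition $u=v+w$, the centering of the top-order $f_\alpha$ by a $t$-dependent quantity, the tangential differentiation of $v$ to invoke Lemma \ref{fund schauder}, and the final Campanato triangle inequality all mirror the paper's proof. There are two substantive deviations, one cosmetic and one genuine. Cosmetic: you center by the spatial average $(f_\alpha)_{B_R^+}(t)$ and localize by the sharp cutoff $\chi_{Q_R^+}$, whereas the paper centers by the boundary trace $f_\alpha(t,0)$ and uses a smooth cutoff $\zeta$ on $Q_{4R}$; both choices produce the $[f_\alpha]^{\ast}_a R^a$ factor equally well. Genuine: for the key bound $\|D^m w\|^2_{L_2}\le N\sum_\alpha R^{2(m-|\alpha|)}\|g_\alpha\|^2_{L_2}$, the paper runs the energy identity, then inserts the polynomial subtraction $h=w-P$ of Lemma \ref{tech 3} to generate the missing $R^{m-|\alpha|}$ weights on the lower-order terms. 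You instead obtain the same weights directly by parabolic rescaling of the whole problem to unit scale, applying the scale-free $L_2$ bound, and scaling back. Your route is shorter and avoids the technical point (present in the paper) of justifying that $h$ can be substituted into the energy inequality despite $h$ not having zero initial data; the paper's route is more self-contained since it reuses Lemma \ref{tech 3}, which is already established.

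One statement should be made precise, although it does not affect the outcome. You write that $v=u-w$ satisfies the homogeneous system \emph{with the conormal boundary condition} on $\{x_d=0\}\cap Q_R$. In fact $v$ satisfies
\begin{equation*}
v_t+(-1)^m\l_0 v=\sum_{|\alpha|=m}D^\alpha\big((f_\alpha)_{B_R^+}(t)\big)\quad\text{in }Q_R^+
\end{equation*}
in the weak sense corresponding to this right-hand side, and because $\alpha_d$ can be positive this produces a nonvanishing boundary term (involving $D_d^{m-1}\phi\big|_{x_d=0}$) in the weak formulation; so $v$ itself does \emph{not} satisfy the homogeneous conormal condition. What is true, and what you actually use, is that $D_j v$ for $j<d$ satisfies the homogeneous system with the homogeneous conormal condition, because substituting $\phi\mapsto D_j\phi$ and integrating by parts tangentially annihilates the residual right-hand side (it depends on $t$ only). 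That is exactly how the paper phrases it. You should also record, as the paper does, the mollification step ensuring $w$ (hence $D_j v$) is smooth enough to feed into Lemma \ref{fund schauder}.
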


\begin{proof}
Let $\zeta\in C_0^\infty(\bR^{d+1})$ and
\begin{equation*}
\zeta=1\quad \text{in}\quad Q_{2R}, \quad \zeta=0\quad \text{outside} \quad (-(4R)^{2m},(4R)^{2m})\times B_{4R}.
\end{equation*}
For $T=-(4R)^{2m}$, consider the following system,
\begin{equation*}
w_t+(-1)^m\l_0 w=\sum_{|\alpha|\le m}D^\alpha (\zeta \tilde{f}_\alpha)
\end{equation*}
in $(-T,0)\times \bR^{d}_+$ with the conormal derivative boundary condition on $\{x_d=0\}$ and the zero initial condition on $\{-T\}\times\bR_+^{d}$, where $$\tilde{f}_\alpha(t,x)=f_\alpha(t,x)-f_\alpha(t,0)$$ if $|\alpha|=m$, and $\tilde{f}_\alpha=f_\alpha$ otherwise.
From Theorem \ref{L_2}, the above system has a unique solution $w\in \h_2^m((-T,0)\times \bR^d_+)$. {Indeed, we can consider the system which $\tilde{w}=e^{-\lambda t}w$ satisfies, where $\lambda>0$. It is easy to see that from Theorem \ref{L_2} we can solve for $\tilde{w}$, and then obtain $w$.}  By the mollification argument as in Lemma \ref{mean half space}, we may assume that $w$ is smooth.
{Let $v=u-w$, which satisfies}
\begin{equation*}
v_t+(-1)^m \l_0 v=\sum_{|\alpha|=m}D^\alpha f_\alpha(t,0) \quad \text{in}\quad Q_{2R}^+
\end{equation*}
with the conormal derivative boundary condition on $\{x_d=0\}$. We differentiate above system with respect to $x^\prime$ and let $\hat{v}=D_{x^\prime}v$. Then $\hat{v}$ satisfies  all the conditions in Lemma \ref{fund schauder} so that  \eqref{mix schauder} holds for $\hat{v}$. Therefore, we obtain
\begin{align}
&\int_{Q_r^+(X_0)}|D_{x^\prime}D^{m-1}v-(D_{x^\prime}D^{m-1}v)_{Q_r^+(X_0)}|^2\,dx\,dt\nonumber \\&\le N (\frac{r}{R})^{2\gamma+2m+d} \int_{Q_R^+(X_0)}|D_{x^\prime}D^{m-1}v-(D_{x^\prime}D^{m-1}v)_{Q_R^+(X_0)}|^2\,dx\,dt\label{v estimate 1}.
\end{align}
From the proof of Theorem \ref{L_2},
\begin{align}
&\delta\int_{Q_{4R}^+} |D^m w|^2 \,dx\,dt\nonumber \\
&\le N\sum_{|\alpha|=m}\int_{Q_{4R}^+}|(f_\alpha(t,x)-f_\alpha(t,0))D^\alpha w| \,dx\,dt+\sum_{|\alpha|<m} \int_{Q_{4R}^+}|f_\alpha D^\alpha w|\,dx\,dt\nonumber\\
&\le \varepsilon \|D^m w\|^2_{L_2(Q_{4R}^+)}+N(\varepsilon)\sum_{|\alpha|=m}\|f_\alpha(t,x)-f_\alpha(t,0)\|^2_{L_2(Q_{4R}^+)}\nonumber\\
&\quad+\sum_{|\alpha|<m}\int_{Q_{4R}^+} |f_\alpha D^\alpha w|\,dx\,dt.\label{w estimate2}
\end{align}
We take $\varepsilon$ sufficiently small so that the first term on the right-hand side is absorbed in the left-hand side. The second term is less than $$\sum_{|\alpha|=m}N[f_\alpha]_{a, Q_{4R}^+}^{\ast2}R^{2a+d+2m}.$$ The last term can be estimated in the following way. We choose a vector-valued polynomial as in Lemma \ref{tech 3} with respect to $w$. Let $h=w-P(x)$. {Because $P(x)$ is of degree $m-1$,  $h$ satisfies the same system and boundary condition as $w$}. Moreover $D^m h$=$D^m w$. Therefore,  \eqref{w estimate2} holds with $h$ in place of $w$ in the last term of the right-hand side. By Lemma \ref{tech 3} with $4R$ in place of $R$ and the Cauchy--Schwarz inequality,
\begin{align}
&\int_{Q_{4R}^+}|f_\alpha D^\alpha h |\,dx\,dt\le N\|f_\alpha\|_{L_\infty(Q_{4R}^+)}|Q_{4R}^+|^\.5\|D^\alpha h\|_{L_2(Q_{4R}^+)}\nonumber\\
&\le N \|f_\alpha\|_{L_\infty(Q_{4R}^+)}|Q_{4R}^+|^\.5 \big(R^{m-|\alpha|}\|D^m w\|_{L_2(Q_{4R}^+)}\nonumber\\
&\quad+\sum_{|\beta|\le m}R^{3m+d/2-|\alpha|-|\beta|}\|\tilde{f}_\beta\|_{L_\infty(Q_{4R}^+)}\big).\label{eq 7.37}
\end{align}
Since $R<1$ and for $|\beta|=m$
$$
\|\tilde{f}_\beta\|_{L_\infty(Q_{4R}^+)}\le N[f_\beta]_{a, Q_{4R}^+}^\ast R^a,
$$ by Young's inequality and the Cauchy--Schwarz inequality, one can bound {the right-hand side of} \eqref{eq 7.37} by
\begin{align}
&\varepsilon \|D^m w\|^2_{L_2(Q_{4R}^+)}\nonumber \\
&+N(\varepsilon)(\sum_{|\alpha|<m} \|f_\alpha\|^2_{L_\infty(Q_{4R}^+)}R^{2+2m+d}+\sum_{|\alpha|=m}[f_\alpha]_{a, Q_{4R}^+}^{\ast2}R^{2a+2m+d}).\label{low order}
\end{align}
Finally, choosing $\varepsilon$ sufficiently small and combining \eqref{v estimate 1}-\eqref{low order}, by the triangle inequality we immediately prove the lemma. \end{proof}

Thanks to Lemma \ref{tech}, after taking $\gamma>a$,  the following inequality holds for any $r\in (0,R)$
\begin{align*}
&\int_{Q_r^+(X_0)}|D_{x^\prime}D^{m-1}u-(D_{x^\prime}D^{m-1}u)_{Q_r^+(X_0)}|^2\,dx\,dt \\&\le N (\frac{r}{R})^{2a+2m+d} \int_{Q_R^+(X_0)}|D_{x^\prime}D^{m-1}u-(D_{x^\prime}D^{m-1}u)_{Q_R^+(X_0)}|^2\,dx\,dt\\
&\quad +N\sum_{|\alpha|=m}[f_\alpha]_{a, Q_{4R}^+} ^{\ast2} r^{2a+2m+d} +N\sum_{|\alpha|<m} \|f_\alpha\|_{L_\infty(Q_{4R}^+)}^2r^{2a+2m+d}.
\end{align*}
By Lemma \ref{holder} together with the corresponding interior estimate, we obtain
\begin{equation}
[D_{x^\prime}D^{m-1}u]_{\frac{a}{2m},a,Q_R^+}\le N (R^{-(2a+d+2m)}\int_{Q_{4R}^+}|D^m u|^2\,dx\,dt+F^2)^{\frac{1}{2}},\label{except m}
\end{equation}
where $F=\sum_{|\alpha| <m}\|f_\alpha\|_{L_\infty(Q_{4R}^+)}+\sum_{|\alpha|=m}[f]_{a, Q_{4R}^+}^\ast $.

\subsubsection{Variable coefficients depending on both  $x$ and $t$}In this part, we use the argument of freezing the coefficients  to deal with the case {when} $A^{\alpha\beta}$ {depend on} both $x$ and $t$. First, let us consider systems which only consists of highest order terms:
\begin{equation}
u_t+(-1)^m \l_hu=u_t+(-1)^m \sum_{|\alpha|=|\beta|=m}D^\alpha (A^{\alpha\beta}D^\beta u)=\sum_{|\alpha|\le m}D^\alpha f_\alpha\label{highest order}
\end{equation}
in $Q_{4R}^+$ and follow the steps in the simple coefficient case. Let $X_0\in Q_R^+$ and the equation above can be written as
\begin{equation*}
u_t+(-1)^m \l_{0x_0}u=\sum_{|\alpha|\le m}D^\alpha \tilde{f}_\alpha,
\end{equation*}
where
\begin{equation*}
\l_{0x_0}=\sum_{|\alpha|=|\beta|=m}D^{\alpha}(A^{\alpha\beta}(t,x_0)D^\beta),
\end{equation*}
and
$$
\tilde{f}_\alpha=f_\alpha+(-1)^m \sum_{|\beta|=m}(A^{\alpha\beta}(t,x_0)-A^{\alpha\beta}(t,x))D^\beta u
$$
when $|\alpha|=m$, and $\tilde{f}_\alpha=f_\alpha$ otherwise.
Following exactly the same argument as in Lemma \ref{tan holder} with $\tilde{f_\alpha}$ in place of $f_\alpha$, we can prove the following lemma corresponding to Lemma \ref{tan holder}.
\begin{lemma}\label{high tan}
Assume that $a\in(0,1)$, $u\in C_{loc}^{\infty}(\overline{{\o_\infty^+}})$ satisfies \eqref{highest order} with the conormal derivative boundary condition on $\{x_d=0\}\cap Q_{4R}$, $f_\alpha \in C^{a\ast}$ if $|\alpha| =m$, $f_\alpha \in L_\infty$ if $|\alpha|< m$, and $A^{\alpha\beta}\in C^{a\ast}$. Then for any $0<r<R\le 1, \gamma\in(0,1)$ and $X_0\in \overline{Q_R^+}$, there exists a constant $N=N(d,n,m,\delta,K,\gamma,\|A^{\alpha\beta}\|_a^\ast)$ such that
\begin{align*}
&\int_{Q_r^+(X_0)}|D_{x^\prime}D_{}^{m-1}u-(D_{x^\prime}D^{m-1}u)_{Q_r^+(X_0)}|^2\,dx\,dt\\
&\le N(\frac{r}{R})^{2\gamma+2m+d}\int_{Q_R^+(X_0)}|D_{x^\prime}D^{m-1}u-(D_{x^\prime}D^{m-1}u)_{Q_R^+(X_0)}|^2\,dx\,dt\\
&\quad+N(\sum_{|\alpha|=m}[f_\alpha]_{a, Q_{4R}^+(X_0)} ^{\ast2}+ \|D^m u\|^2_{L_\infty(Q_{4R}^+(X_0))})R^{2a+2m+d}
\\
&\quad+N\sum_{|\alpha|<m} \|f_\alpha\|_{L_\infty(Q_{4R}^+(X_0))}^2R^{2+2m+d}.
\end{align*}
\end{lemma}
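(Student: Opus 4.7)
The plan is to reduce to the simple-coefficient setting by freezing the leading coefficients at the spatial center of $X_0$, and then to apply Lemma \ref{tan holder} (together with the corresponding interior estimate) to the transformed equation. Fix $X_0 = (t_0, x_0) \in \overline{Q_R^+}$. I will rewrite the equation as
\begin{equation*}
u_t + (-1)^m \l_{0x_0} u = \sum_{|\alpha|\le m} D^\alpha \tilde{f}_\alpha,
\end{equation*}
where $\l_{0x_0} = \sum_{|\alpha|=|\beta|=m} D^\alpha(A^{\alpha\beta}(t,x_0) D^\beta)$ has coefficients depending only on $t$, and
\begin{equation*}
\tilde{f}_\alpha = f_\alpha + (-1)^m \sum_{|\beta|=m}(A^{\alpha\beta}(t,x_0) - A^{\alpha\beta}(t,x)) D^\beta u \quad \text{for } |\alpha|=m,
\end{equation*}
with $\tilde{f}_\alpha = f_\alpha$ otherwise. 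Directly from the weak formulation, $u$ continues to satisfy the conormal boundary condition associated with the frozen operator $\l_{0x_0}$. When $(x_0)_d = 0$, Lemma \ref{tan holder} applies verbatim with $\tilde{f}_\alpha$ in place of $f_\alpha$; when $X_0$ is interior, the analogous interior estimate (derivable just as Lemma \ref{fund schauder} from the $L_p$ theory of \cite{DK10}) produces the same structural inequality.

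The key remaining step is to re-estimate the new right-hand side. Tracing the proof of Lemma \ref{tan holder}, the Hölder semi-norm $[f_\alpha]_a^*$ for $|\alpha|=m$ enters only through the $L_2$-difference estimate
\begin{equation*}
\|f_\alpha(t,\cdot) - f_\alpha(t,x_0)\|_{L_2(Q_{4R}^+(X_0))} \le N [f_\alpha]_{a,Q_{4R}^+(X_0)}^{\ast} R^{a+(d+2m)/2}.
\end{equation*}
For the extra contribution $(A^{\alpha\beta}(t,x_0) - A^{\alpha\beta}(t,x)) D^\beta u$, I will bound it directly in $L_2$ by
\begin{equation*}
N \|A^{\alpha\beta}\|_a^{\ast} \, \|D^m u\|_{L_\infty(Q_{4R}^+(X_0))} \, R^{a + (d+2m)/2},
\end{equation*}
using only the Hölder continuity of $A^{\alpha\beta}$ in $x$ and the $L_\infty$ bound on $D^m u$. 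Squaring and inserting this into the estimate from Lemma \ref{tan holder} generates precisely the extra $\|D^m u\|_{L_\infty(Q_{4R}^+(X_0))}^2 R^{2a+2m+d}$ term appearing in the conclusion.

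The main subtlety is that $D^\beta u$ is not assumed Hölder continuous in $x$, so one cannot naively invoke $[\tilde{f}_\alpha]_a^{\ast}$; one must work with the $L_2$-version above, which is what actually enters through the bound on the auxiliary solution $w$ in the proof of Lemma \ref{tan holder}. Apart from this point, the terms involving the original $f_\alpha$ are treated exactly as in Lemma \ref{tan holder}, and reassembling all pieces yields the claimed mean oscillation estimate.
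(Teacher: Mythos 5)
Your proposal is correct and follows the same route as the paper: freeze the leading coefficients at the spatial center $x_0$, absorb the difference $(A^{\alpha\beta}(t,x_0)-A^{\alpha\beta}(t,x))D^\beta u$ into the right-hand side as part of a new $\tilde f_\alpha$, and repeat the argument of Lemma \ref{tan holder}. You also correctly flag (and resolve) the point the paper glosses over -- that $\tilde f_\alpha$ is not itself in $C^{a*}$ since $D^\beta u$ is only bounded, so one must re-run the $L_2$ difference estimate on $\tilde f_\alpha(t,\cdot)-\tilde f_\alpha(t,x_0)$ rather than quote the conclusion of Lemma \ref{tan holder} verbatim -- which is exactly how the extra $\|D^m u\|_{L_\infty}^2 R^{2a+2m+d}$ term arises.
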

By Lemmas \ref{tech} and \ref{holder}, we derive the following corollary by taking $\gamma>a$.
\begin{corollary}\label{cor tan}
Under the conditions in Lemma \ref{high tan}, we have
\begin{equation*}
[D_{x^\prime}D^{m-1}u]_{\frac{a}{2m},a,Q_R^+}\le N(\|D^m u\|_{L_\infty(Q_{4R}^+)}+F),
 \end{equation*}
 where $N=N(d,m,n,\delta, \|A^{\alpha\beta}\|_a^\ast, K,R,a)$ and $F $ is defined in \eqref{except m} .
\end{corollary}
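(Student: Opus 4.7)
\medskip

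\noindent\textbf{Proof proposal for Corollary \ref{cor tan}.}
The plan is to convert the mean oscillation estimate of Lemma \ref{high tan} into a Campanato-type decay rate via the iteration lemma (Lemma \ref{tech}), and then invoke the Campanato characterization of parabolic H\"older spaces (Lemma \ref{holder}(ii)). Fix $X_0\in\overline{Q_R^+}$ and set
\[
\Phi(r):=\int_{Q_r^+(X_0)}|D_{x'}D^{m-1}u-(D_{x'}D^{m-1}u)_{Q_r^+(X_0)}|^2\,dx\,dt,\qquad 0<r\le R.
\]
Since the average minimizes the $L_2$ deviation, $\Phi$ is nondecreasing in $r$, which is what Lemma \ref{tech} requires.

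First I would choose $\gamma\in(a,1)$ and apply Lemma \ref{high tan} at the pair of radii $(\rho,r)$ with $0<\rho<r\le R$ (note the system is posed on $Q_{4R}^+\supset Q_{4r}^+(X_0)$ for $X_0\in\overline{Q_R^+}$ after an arbitrarily small shrinking of $R$, which is harmless). Using $R\le 1$ and $a<1$, we have $r^{2+2m+d}\le r^{2a+2m+d}$, so the lower-order data term $r^{2+2m+d}\sum_{|\alpha|<m}\|f_\alpha\|_{L_\infty(Q_{4r}^+(X_0))}^2$ can be absorbed into $r^{2a+2m+d}$. Moreover, the norms on $Q_{4r}^+(X_0)$ are controlled by the corresponding norms on $Q_{4R}^+$. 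This yields, with $\widetilde F:=\|D^m u\|_{L_\infty(Q_{4R}^+)}+F$,
\[
\Phi(\rho)\le N\Big(\frac{\rho}{r}\Big)^{2\gamma+2m+d}\Phi(r)+N\,\widetilde F^{\,2}\,r^{2a+2m+d}
\]
for every $0<\rho<r\le R$.

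Next I would apply Lemma \ref{tech} with the two exponents $2\gamma+2m+d>2a+2m+d>0$ (this is exactly where the choice $\gamma>a$ is used). The conclusion is
\[
\Phi(r)\le N\,r^{2a+2m+d}\Bigl(R^{-(2a+2m+d)}\Phi(R)+\widetilde F^{\,2}\Bigr),\qquad 0<r\le R.
\]
The term $\Phi(R)$ is trivially bounded by $NR^{2m+d}\|D^m u\|_{L_\infty(Q_{4R}^+)}^2\le NR^{2m+d}\widetilde F^{\,2}$, so after dividing by $|Q_r^+(X_0)|\sim r^{2m+d}$ we obtain
\[
\dashint_{Q_r^+(X_0)}|D_{x'}D^{m-1}u-(D_{x'}D^{m-1}u)_{Q_r^+(X_0)}|^2\,dx\,dt\le N\,\widetilde F^{\,2}\,r^{2a},
\]
uniformly for all $X_0\in\overline{Q_R^+}$ and $0<r\le R$. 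When $X_0$ is an interior point with $r<{x_0}_d$, $Q_r^+(X_0)=Q_r(X_0)$, so the same bound gives the interior Campanato estimate; when $X_0\in\{x_d=0\}\cap Q_R$, it gives the boundary estimate. Both hypotheses of Lemma \ref{holder}(ii) (applied after the usual rescaling to bring $R$ to $1$) are therefore satisfied, which yields $[D_{x'}D^{m-1}u]_{\frac{a}{2m},a,Q_R^+}\le N\widetilde F$, as desired.

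The proof is largely bookkeeping on top of Lemmas \ref{high tan}, \ref{tech}, and \ref{holder}; the only step requiring care is the comparison of norms across different scales $Q_{4r}^+(X_0)\subset Q_{4R}^+$ and the minor shrinking of $R$ needed so that Lemma \ref{high tan} applies at every intermediate scale—an issue that disappears after the standard trick of working on $Q_{R'}^+$ for $R'<R$ and then letting $R'\to R$.
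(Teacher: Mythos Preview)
Your proposal is correct and follows exactly the route the paper indicates: choose $\gamma>a$, feed the inequality of Lemma \ref{high tan} into the iteration Lemma \ref{tech}, and conclude via the Campanato characterization in Lemma \ref{holder}. The paper's proof is the one-line ``By Lemmas \ref{tech} and \ref{holder}, we derive the following corollary by taking $\gamma>a$,'' and you have simply spelled out those steps (including the standard bookkeeping about shrinking $R$ and absorbing $\Phi(R)$ into $\widetilde F^2$).
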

\begin{proof}[Proof of Theorem \ref{tan thm}:] Now we are ready to handle the general system
\begin{equation*}
u_t+(-1)^m \l u=\sum_{|\alpha|\le m}D^\alpha f_\alpha,
\end{equation*}
which can be written as
\begin{equation*}
u_t+(-1)^m\l_h u=\sum_{|\alpha|\le m}D^\alpha \tilde{f}_\alpha,
\end{equation*}
where
\begin{align*}
\tilde{f}_\alpha=f_\alpha+(-1)^{m+1}\sum_{|\beta|<m}A^{\alpha\beta}D^\beta u\quad \text{if}\quad |\alpha|=m,\\
\tilde{f}_\alpha=f_\alpha+(-1)^{m+1}\sum_{|\beta|\le m}A^{\alpha\beta}D^\beta u\quad \text{otherwise}.
\end{align*}
Note that
\begin{equation*}
\sum_{|\beta|<m}[A^{\alpha\beta}D^\beta u]^\ast_{a, Q_{4R}^+}\le N\sum_{|\beta|<m}(\|A^{\alpha\beta}\|_{L_\infty}[D^\beta u]^\ast_{a,Q_{4R}^+}+[A^{\alpha\beta}]^\ast_{a}\|D^\beta u\|_{L_\infty(Q_{4R}^+)}).
\end{equation*}
 We substitute $f_\alpha$ with $\tilde{f}_\alpha$ in the estimate of Corollary \ref{cor tan} to get
 $$[D_{x^\prime}D^{m-1}u]_{\frac{a}{2m},a, Q_R^+}\le N(\|D^m u\|_{L_\infty(Q_{4R}^+)}+\sum_{|\beta|<m}\|D^\beta u\|^\ast_{a, Q_{4R}^+}+F).$$
 After implementing a standard interpolation inequality, for instant see Section 8.8 of \cite{Kry96} and Lemma 5.1 of \cite{Gia93}, we prove Theorem \ref{tan thm}.
\end{proof}


 \subsection{The estimate of $D_d^m u$}
 In this subsection, we prove the Schauder estimate for $D_d^m u$. In this case, in view of the example in the introduction it is not sufficient that the coefficients are merely measurable in $t$.  We need the coefficients to be H\"older continuous in $t$ as well.

\subsubsection{Special system with coefficients depending only on $t$}
In this part, we study the special system which we introduced in Lemma \ref{boundary condition}.
\begin{lemma}\label{const coeff}
Assume that $u\in C_{loc}^\infty(\overline{\o_\infty^+})$ satisfies
\begin{equation*}
u_t+(-1)^m \tilde{\l}_0 u= \sum_{|\alpha|=m}D^\alpha f_\alpha \quad \text{in}\quad Q_{2R}^+
\end{equation*}
with the conormal derivative boundary condition on $\{x_d=0\}\cap Q_{2R}$. Moreover, $A^{\hat{\alpha}\hat{\alpha}}$ and $f_{\hat{\alpha}}$ are constants, and $f_\alpha=f_\alpha(t)$ for $|\alpha|=m, \alpha\neq\hat{\alpha}$. Then for any $0<r<R\le 1$ and $X_0\in \{x_d=0\}\cap Q_R$, there exist  a constant $N=N(d,n,m,\delta, K)$ and a constant vector $c$ such that
\begin{equation*}
\int_{Q_{r}^+(X_0)}|D^m_d u-c|^2\,dx\,dt \le N(\frac{r}{R})^{4m+d}\int_{Q_{R}^+(X_0)}|D_d^m u-c|^2\,dx\,dt.
\end{equation*}
\end{lemma}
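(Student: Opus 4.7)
The plan is to reduce the lemma to a homogeneous Dirichlet problem and exploit the fact that, after subtracting a single constant vector, $D_d^m u$ vanishes to order $m$ on the flat boundary. First, I would revisit the weak conormal formulation exactly as in the proof of Lemma~\ref{boundary condition}: test against an arbitrary $\phi \in C_0^\infty(\overline{Q_{2R}^+})$ and integrate by parts $m$ times in $x_d$ on both sides. Because $A^{\hat{\alpha}\hat{\alpha}}$ and $f_{\hat\alpha}$ are constants, the only right-hand-side contribution surviving on the boundary is $-(-1)^m\int_{\{x_d=0\}} f_{\hat\alpha}D_d^{m-1}\phi\,dx'\,dt$; and because $f_\alpha=f_\alpha(t)$ for $\alpha\neq\hat\alpha$ with $|\alpha|=m$, these remaining right-hand-side terms vanish after integrating their $x'$-derivatives by parts against $\phi$. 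Matching boundary coefficients against $D_d^j\phi\big|_{x_d=0}$ for $j=0,\ldots,m-1$ therefore produces both the interior equation $u_t+(-1)^m\tilde{\l_0}u=0$ in $Q_{2R}^+$ and the boundary system
$$A^{\hat\alpha\hat\alpha}D_d^m u=(-1)^m f_{\hat\alpha},\qquad D_d^{m+1}u=\cdots=D_d^{2m-1}u=0\quad\text{on}\quad\{x_d=0\}\cap Q_{2R}.$$
Since $A^{\hat\alpha\hat\alpha}$ is positive definite by the strong ellipticity, the first identity pins $D_d^m u$ on the boundary to the constant vector $c:=(-1)^m(A^{\hat\alpha\hat\alpha})^{-1}f_{\hat\alpha}$.

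Setting $v:=D_d^m u-c$, the next observation is that $v$ solves a fully homogeneous Dirichlet problem. Indeed, because the coefficients of $\tilde{\l_0}$ depend only on $t$, the operator $D_d^m$ commutes with $\tilde{\l_0}$, and $\tilde{\l_0}c=0$ since $c$ is constant. Hence $v_t+(-1)^m\tilde{\l_0}v=0$ in $Q_{2R}^+$, and the boundary conditions on $u$ translate into
$$v=D_dv=\cdots=D_d^{m-1}v=0\quad\text{on}\quad\{x_d=0\}\cap Q_{2R},$$
which places $v$ in the setting of Lemma~\ref{rk dz}.

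The final step combines Taylor expansion with boundary regularity. Since $v$ vanishes together with $D_d v,\ldots,D_d^{m-1}v$ at $x_d=0$, Taylor's theorem with integral remainder in the $x_d$ variable gives the pointwise bound
$$|v(t,x',x_d)|\le\frac{x_d^m}{m!}\sup_{s\in[0,x_d]}|D_d^m v(t,x',s)|.$$
Because $X_0\in\{x_d=0\}\cap Q_R$, one has $0\le x_d\le r$ throughout $Q_r^+(X_0)$, so integrating the squared pointwise estimate yields
$$\int_{Q_r^+(X_0)}|v|^2\,dx\,dt\le Nr^{4m+d}\|D_d^m v\|_{L_\infty(Q_r^+(X_0))}^2.$$
To control the right-hand side I would apply Lemma~\ref{rk dz} to $v$ after translating so that $X_0$ is the reference point and rescaling by $R$. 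Fixing any $a\in(m,2m)$, standard interpolation extracts $\|D_d^m v\|_{L_\infty}$ from the H\"older norm $\|v\|_{a/(2m),a}$, and parabolic scaling produces
$$\|D_d^m v\|_{L_\infty(Q_{R/4}^+(X_0))}\le NR^{-(4m+d)/2}\|v\|_{L_2(Q_R^+(X_0))}.$$
Combining these two estimates gives the desired inequality for $r\le R/4$, while the remaining range $R/4<r<R$ is handled trivially by enlarging $N$.

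The delicate point is the first step: tracking the boundary integrals in the weak formulation with the nontrivial inhomogeneity $f_{\hat\alpha}$ to obtain the pointwise identity $D_d^m u=c$ on $\{x_d=0\}$. Once this reduction is in hand, the remainder is essentially Taylor expansion against an already-available boundary Schauder-type estimate, and the unusual exponent $4m+d$ arises naturally from the $x_d^m$-vanishing of $v=D_d^m u-c$ at the boundary.
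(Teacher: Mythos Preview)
Your reduction is identical to the paper's: you correctly read off from the weak formulation that the interior equation becomes homogeneous and that on $\{x_d=0\}$ one has $A^{\hat\alpha\hat\alpha}D_d^m u=(-1)^m f_{\hat\alpha}$ together with $D_d^{m+1}u=\cdots=D_d^{2m-1}u=0$, so that $v:=D_d^m u-c$ with $c=(-1)^m(A^{\hat\alpha\hat\alpha})^{-1}f_{\hat\alpha}$ solves the homogeneous system with the zero Dirichlet data. The paper does exactly this and then simply invokes Lemma~4.6 of \cite{DZ12} to obtain
\[
\int_{Q_r^+(X_0)}|v|^2\,dx\,dt\le N\Big(\frac{r}{R}\Big)^{4m+d}\int_{Q_R^+(X_0)}|v|^2\,dx\,dt.
\]
Your final step replaces that external citation by a self-contained argument: Taylor expansion in $x_d$ (using that $v,\ldots,D_d^{m-1}v$ vanish at $x_d=0$) gives $\int_{Q_r^+(X_0)}|v|^2\le Nr^{4m+d}\|D_d^m v\|_{L_\infty(Q_r^+(X_0))}^2$, and Lemma~\ref{rk dz} with $a\in(m,2m)$ plus parabolic rescaling gives $\|D_d^m v\|_{L_\infty(Q_{R/4}^+(X_0))}\le NR^{-(4m+d)/2}\|v\|_{L_2(Q_R^+(X_0))}$; combining these for $r\le R/4$ and handling $r>R/4$ trivially reproduces the decay. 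This is correct and is essentially what the cited lemma in \cite{DZ12} does anyway, so the two proofs are the same in substance---yours is just more explicit about the mechanism behind the exponent $4m+d$, namely the $x_d^m$ vanishing of $v$ at the boundary.
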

\begin{proof}
Following the method in the proof of Lemma \ref{boundary condition}, it is obvious that the  system and boundary condition can be written as
\begin{align*}
&u_t+(-1)^m \tilde{\l}_0 u=0 \quad \text{in}\quad Q_{2R}^+,\\
&D_d^m u=(-1)^m({A^{\hat{\alpha}\hat{\alpha}}})^{-1}f_{\hat{\alpha}},\, D_d^{m+1}u=\cdots=D_d^{2m-1}u=0\,\, \text{on}\, \{x_d=0\}\cap Q_{2R}.
\end{align*}
Due to our assumption, $D_d^m u$ is constant on the boundary.  Hence $v:=D_d^m u-(-1)^m({A^{\hat{\alpha}\hat{\alpha}}})^{-1}f_{\hat{\alpha}}$ {satisfies}
\begin{align*}
&v_t+(-1)^m \tilde{\l}_0 v=0 \quad \text{in}\quad Q_{2R}^+,\\
&v=\cdots=D_d^{m-1}v=0\quad \text{on}\quad \{x_d=0\}\cap Q_{2R}.
\end{align*}
From Lemma 4.6 of \cite{DZ12},
\begin{equation*}
\int_{Q_{r}^+(X_0)}|v|^2\,dx\,dt \le N(\frac{r}{R})^{4m+d}\int_{Q_{R}^+(X_0)}|v|^2\,dx\,dt.
\end{equation*}
Setting $c=(-1)^m({A^{\hat{\alpha}\hat{\alpha}}})^{-1}f_{\hat{\alpha}}$, we obtain the desired estimate.
\end{proof}
We now use a scaling argument to handle $D_d^m u$ for systems with simple coefficients. Assume that $u$ satisfies \eqref{non homo simple} and as in the proof of Theorem \ref{theorem tan}, we define $v, \tilde{f}_\alpha$, and ${\tilde A}^{\alpha\beta}$.  Then $v$ satisfies the following system
\begin{equation}
v_t+(-1)^m\sum_{|\alpha|=|\beta|=m}\mu^{\alpha_d+\beta_d-2m}D^\alpha({\tilde A}^{\alpha\beta}(t)D^\beta v)=\sum_{|\alpha|\le m}\mu^{\alpha_d-2m}D^\alpha \tilde{f}_\alpha  \label{eq v}
\end{equation}
in $T_\mu(Q_{4R}^+)$, with the conormal derivative boundary condition on $\{x_d=0\}\cap T_\mu(Q_{4R})$, where $T_\mu(t,x^\prime,x_d)=(\mu^{2m}t, x^\prime, \mu x_d)$. From now on, we consider the system of $v$ with $\mu\ge 1$ and it is easy to see that $Q^+_{4R}\subset T_\mu(Q^+_{4R})$.  Note that the regularity assumptions on $A^{\alpha\beta}$ and $f_\alpha$ are naturally inherited by ${\tilde A}^{\alpha\beta}$ and $\tilde{f}_\alpha$. For instance,  if $f_{\hat{\alpha}}\in C^{\frac{a}{2m},a}$, then $\tilde{f}_{\hat{\alpha}}\in C^{\frac{a}{2m},a}$.

\begin{lemma}\label{nor holder1}
Assume that $a\in (0,1)$, $v\in C_{loc}^{\infty}(\overline{{\o_\infty^+}})$ satisfies \eqref{eq v} in $Q^+_{4R}$ with the conormal derivative boundary condition on $\{x_d=0\}\cap Q_{4R}$,  $\tilde{f}_{\hat{\alpha}} \in C^{\frac{a}{2m},a}$ and $\tilde{f}_\alpha\in C^{a\ast}$ if $|\alpha| =m$ but $\alpha\neq\hat{\alpha}$, $\tilde{f}_\alpha \in L_\infty$ if $|\alpha|< m$. Moreover, ${\tilde{A}}^{\hat{\alpha}\beta}(t) \in C^{\frac{a}{2m}}(\bR) $. Then there exist two constants $N_1=N_1(d,n,m,\delta,K,\langle{\tilde{A}}^{\hat{\alpha}\beta}\rangle_{\frac{a}{2m}},R,a)$ and $N_2=N_2(d,n,m,\delta, K,R,a)$ such that
\begin{align*}
[D_d^m v]_{\frac{a}{2m},a,Q_R^+}\le& N_1\|D^m v\|_{L_\infty(Q_{4R}^+)}+N_2([D_{x^\prime}D^{m-1} v]_{\frac{a}{2m},a,Q_{4R}^+(X_0)}\\
&+\mu^{-1}[D_d^m v]_{\frac{a}{2m},a,Q_{4R}^+}+\tilde{G}),
\end{align*}
where
\begin{equation}
{\tilde{G}=\sum_{^{|\alpha|=m}_{\alpha\neq\hat{\alpha}}}
[\tilde{f}_\alpha]^{\ast}_{ a, Q_{4R}^+}+{[\tilde{f}_{\hat{\alpha}}]_{\frac{a}{2m},a,Q_{4R}^+}}
+\sum_{|\alpha|\le m}\|\tilde{f}_\alpha\|_{L_\infty(Q_{4R}^+)}\label{def G}.}
\end{equation}
\end{lemma}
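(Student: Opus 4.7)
The approach is to adapt the freezing-the-coefficients scheme of Lemmas \ref{fund schauder}--\ref{tan holder} to the derivative $D_d^m v$, using Lemma \ref{const coeff} in place of Lemma \ref{fund schauder}. The core idea is to fix an arbitrary point $X_0=(t_0,x_0)\in\overline{Q_R^+}$ and a radius $\rho\in(0,R]$, then rewrite \eqref{eq v} in the form $v_t+(-1)^m\tilde\l_0^{(t_0)}v=\sum_{|\alpha|\le m}D^\alpha \hat f_\alpha$, where $\tilde\l_0^{(t_0)}=D^{\hat\alpha}(\tilde A^{\hat\alpha\hat\alpha}(t_0)D^{\hat\alpha}\,\cdot\,)+\cD_{d-1}^m$ is the special constant-coefficient operator of Section~4, and $\hat f_\alpha$ collects (i) the scaled sources $\mu^{\alpha_d-2m}\tilde f_\alpha$; (ii) the off-diagonal cross terms $\mu^{\alpha_d+\beta_d-2m}\tilde A^{\alpha\beta}(t)D^\beta v$ with $(\alpha,\beta)\ne(\hat\alpha,\hat\alpha)$, each scaled by at most $\mu^{-1}$; (iii) the time-freezing error $[\tilde A^{\hat\alpha\hat\alpha}(t_0)-\tilde A^{\hat\alpha\hat\alpha}(t)]D_d^m v$ placed inside $D_d^m$; and (iv) the compensating $(-1)^{m+1}\cD_{d-1}^m v$ term written in divergence form with $D^{m-1}_{x'}$-type entries.

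Next I would split each $\hat f_\alpha$ with $|\alpha|=m$ into a frozen piece and an error: the frozen piece is a constant when $\alpha=\hat\alpha$ (its value at $(t_0,x_0)$) and a function of $t$ alone when $\alpha\ne\hat\alpha$ (its value at $x=x_0$), exactly the structure required by Lemma \ref{const coeff}. Construct an auxiliary $w$ as the $L_2$-solution on $\o_\infty^+$ of
\begin{equation*}
w_t+(-1)^m\tilde\l_0^{(t_0)}w=\sum_{|\alpha|\le m}D^\alpha\bigl(\zeta[\hat f_\alpha-\hat f_\alpha^{\text{frz}}]\bigr)
\end{equation*}
with zero conormal data, where $\zeta$ is a smooth cutoff supported near $Q_{2\rho}^+(X_0)$. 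Applying Theorem \ref{L_p special} with a sufficiently large $p$, together with H\"older's inequality to pass to $L_2$, yields $\|D_d^m w\|_{L_2(Q_\rho^+(X_0))}\le N\rho^{a+m+d/2}\Psi$, where $\Psi$ is a linear combination of precisely the seminorms appearing on the right-hand side of the target inequality. The function $v-w$ then satisfies the frozen system with constant/$t$-only data on $Q_{2\rho}^+(X_0)$, so Lemma \ref{const coeff} gives
\begin{equation*}
\int_{Q_r^+(X_0)}|D_d^m(v-w)-c|^2\,dx\,dt\le N(r/\rho)^{4m+d}\int_{Q_\rho^+(X_0)}|D_d^m(v-w)-c|^2\,dx\,dt
\end{equation*}
for a suitable constant vector $c$ and all $r<\rho$. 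Combining via the triangle inequality produces a mean oscillation estimate for $D_d^m v$, which after one application of Lemma \ref{tech} to iterate the decay (here $4m+d>2a+2m+d$) is converted by Lemma \ref{holder} into the desired $C^{\frac{a}{2m},a}$ bound on $Q_R^+$.

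The main obstacle will be bookkeeping, i.e.\ identifying which error contributions feed into which term on the right-hand side of the claimed inequality. In particular, the cross terms in (ii) with $\alpha=\hat\alpha$ and $\beta\ne\hat\alpha$ produce data of the form $\mu^{\beta_d-m}\tilde A^{\hat\alpha\beta}(t)D^\beta v$, whose $C^{\frac{a}{2m},a}$-seminorm decomposes as $\langle\tilde A^{\hat\alpha\beta}\rangle_{a/2m}\|D^\beta v\|_{L_\infty}$ plus $K\,[D^\beta v]_{a/2m,a}$; the former is absorbed into $N_1\|D^m v\|_{L_\infty(Q_{4R}^+)}$ since $N_1$ is allowed to depend on $\langle\tilde A^{\hat\alpha\beta}\rangle_{a/2m}$, while the latter contributes either $[D_{x'}D^{m-1}v]_{a/2m,a}$ (when $\beta_d<m$) or $\mu^{-1}[D_d^m v]_{a/2m,a}$ (when $\beta=\hat\alpha$, using $\mu^{\beta_d-m}\le\mu^{-1}$). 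The time-freezing error (iii) feeds $N_1\|D^m v\|_{L_\infty}$ through $\langle\tilde A^{\hat\alpha\hat\alpha}\rangle_{a/2m}\|D_d^m v\|_{L_\infty}$, and (iv) together with the cross terms with $\alpha\ne\hat\alpha$ yields further $[D_{x'}D^{m-1}v]_{a/2m,a}$ contributions; finally, the frozen pieces of $\tilde f_\alpha$ produce exactly the $\tilde G$ defined in \eqref{def G}. Once this decomposition is in hand the algebra matches the claimed form of $N_1$ and $N_2$.
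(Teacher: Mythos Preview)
Your overall architecture matches the paper's proof: freeze the coefficients to produce the special constant-coefficient operator $\hat\l_0:=D^{\hat\alpha}(\tilde A^{\hat\alpha\hat\alpha}(t_0)D^{\hat\alpha}\,\cdot\,)+\cD_{d-1}^m$, split $v=(v-w)+w$ where $w$ absorbs the error terms, apply Lemma~\ref{const coeff} to $v-w$, and conclude via Lemmas~\ref{tech} and~\ref{holder}. Your bookkeeping in the last paragraph is essentially correct, including the identification of which pieces land in $N_1\|D^m v\|_{L_\infty}$, $[D_{x'}D^{m-1}v]_{a/2m,a}$, $\mu^{-1}[D_d^m v]_{a/2m,a}$, and $\tilde G$.

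There is, however, a genuine gap in how you propose to estimate $w$. Theorem~\ref{L_p special} carries a factor $\lambda^{1/2}$ on the left, so for $\lambda=0$ it gives no information; and solving $w_t+(-1)^m\hat\l_0 w=\cdots$ globally on $\o_\infty^+$ is not well posed without $\lambda>0$. If you insert a positive $\lambda$, then $v-w$ no longer satisfies the hypotheses of Lemma~\ref{const coeff}. The paper avoids this entirely: it solves for $w$ on the finite slab $(-(4R)^{2m},0)\times\bR^d_+$ with the zero initial condition (existence via Theorem~\ref{L_2} after the substitution $\tilde w=e^{-\lambda t}w$), and estimates $\|D^m w\|_{L_2(Q_{4R}^+)}$ directly from the energy identity obtained by testing the equation against $w$ itself. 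That produces $\delta\|D^m w\|_{L_2}^2\le\sum\int \overline f_\alpha\,D^\alpha w$, and each top-order piece is absorbed by Young's inequality to yield the desired $R^{2a+2m+d}$ scaling.

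A second point you should anticipate: the lower-order sources $\mu^{\alpha_d-2m}\tilde f_\alpha$ with $|\alpha|<m$ do not come with the extra $\rho^a$ factor, so a naive Cauchy--Schwarz on $\int f_\alpha D^\alpha w$ does not close. The paper handles this by replacing $w$ with $h=w-P(x)$ for a polynomial $P$ of degree $m-1$ matching the low-order averages, and then invokes Lemma~\ref{tech 3} to control $\|D^\alpha h\|_{L_2}$ by $\|D^m w\|_{L_2}$ plus data. This step is what converts the $L_\infty$ bound on the lower-order $\tilde f_\alpha$ into a term with exponent $R^{2+2m+d}\ge R^{2a+2m+d}$, which is exactly what is needed for Lemma~\ref{tech}. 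Once you swap in the energy method and this Poincar\'e-type reduction, your outline goes through and coincides with the paper's argument.
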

\begin{proof}
 We move all the spatial derivatives to the right-hand side and then add $$(-1)^m\cD_{d-1}^mv+(-1)^m D^{\hat{\alpha}}(\tilde{A}^{\hat{\alpha}\hat{\alpha}}(0)D^{\hat{\alpha}}v)$$ to both sides of the system so that
\begin{align}
&v_t+(-1)^m \big(D^{\hat{\alpha}}({\tilde A}^{\hat{\alpha}\hat{\alpha}}(0)D^{\hat{\alpha}}v)+\cD_{d-1}^mv\big)\nonumber\\
&=\sum_{|\alpha|\le m}D^\alpha \hat{f}_\alpha+(-1)^m\cD_{d-1}^mv+(-1)^{m+1}D^{\hat{\alpha}}(({\tilde A}^{\hat{\alpha}\hat{\alpha}}(t)-{\tilde A}^{\hat{\alpha}\hat{\alpha}}(0))D^{\hat{\alpha}}v)\label{const_1},
\end{align}
where $\hat{f}_\alpha=\mu^{\alpha_d-2m}\tilde{f}_\alpha$ for $|\alpha|<m$,
$$\hat{f}_\alpha=\mu^{\alpha_d-2m}\tilde{f}_\alpha+ (-1)^{m+1}\sum_{{|\beta|=m}}\mu^{\alpha_d+\beta_d-2m} {\tilde{A}}^{\alpha\beta}(t)D^\beta v$$ {for} $|\alpha|=m$ {but} $\alpha\neq \hat{\alpha}$, and
$$\hat{f}_{\hat{\alpha}}=\mu^{-m}\tilde{f}_{\hat{\alpha}}+(-1)^{m+1}\sum_{^{\beta\neq\hat{\alpha}}_{|\beta|=m}}\mu^{\beta_d-m}\tilde{A}^{\hat{\alpha}\beta}(t)D^\beta v.$$
Notice that the left-hand side of \eqref{const_1} satisfies the conditions in Lemma \ref{const coeff} and similar to $\tilde{\l}_0$ we denote this operator as $\hat{\l}_0$.  The only difference is that $A^{\hat{\alpha}\hat{\alpha}}$ is constant in $\hat{\l}_0$.

We use basically the same argument as in Lemma \ref{tan holder} with slight modifications.
Let $w\in \h_2^m((-(4R)^{2m},0)\times \bR^{d}_+)$ be the solution of the following system
\begin{align}\nonumber
&w_t+(-1)^m \hat{\l}_0 w=\sum_{|\alpha|\le m}D^{\alpha}(\zeta\overline{f}_\alpha)+(-1)^m \sum_{j=1}^{d-1}D_j^m(\zeta(D^m_j v-D^m_jv(t,0)))\\
&+(-1)^{m+1}D^{\hat{\alpha}}(\zeta(\tilde{A}^{\hat{\alpha}\hat{\alpha}}(t)-\tilde{A}^{\hat{\alpha}\hat{\alpha}}(0))D^{\hat{\alpha}}v)\quad\text{in} \quad(-(4R)^{2m},0) \times \bR_+^{d}\label{eq 3.38}
\end{align}
with the conormal derivative boundary condition on $\{x_d=0\}$ and the zero initial condition on $\{-(4R)^{2m}\}\times\bR^{d}_+$, where $\zeta$ is the same smooth function as in Lemma \ref{tan holder}, $\overline{f}_{\hat{\alpha}}=\hat{f}_{\hat{\alpha}}(t,x)-\hat{f}_{\hat{\alpha}}(0,0)$, $\overline{f}_\alpha=\hat{f}_\alpha(t,x)-\hat{f}_\alpha(t,0)$ if $|\alpha|=m$ and $\alpha\neq\hat{\alpha}$, and $\overline{f}_\alpha=\hat{f}_\alpha$ otherwise. Upon applying the mollification argument as in Lemma \ref{mean half space}, we can assume that $w$ is smooth. Then $\mathfrak{v}:=u-w$, satisfies
\begin{equation}
\mathfrak{v}_t+(-1)^m\hat{\l}_0 \mathfrak{v}=\sum_{^{|\alpha|=m}_{\alpha\neq\hat{\alpha}}}D^{\alpha}\hat{f}_\alpha(t,0)+D^{\hat{\alpha}}\hat{f}_{\hat{\alpha}}(0,0)+(-1)^m\sum_{j=1}^{d-1}D_j^{2m}v(t,0) \label{eq 3.39}
\end{equation}
in $Q_{2R}^+$ with the conormal derivative boundary condition on $\{x_d=0\}\cap Q_{2R}$.
 Due to Lemma \ref{const coeff},
\begin{equation*}
\int_{Q_{r}^+(X_0)}|D^m_d \mathfrak{v}-c|^2\,dx\,dt \le N(\frac{r}{R})^{4m+d}\int_{Q_{R}^+(X_0)}|D_d^m \mathfrak{v}-c|^2\,dx\,dt
\end{equation*}
for some constant vector $c$.
Next we estimate $w$  applying the same idea as in Lemma \ref{tan holder},
\begin{align}\nonumber
&\delta\int_{Q_{4R}^+} |D^m w|^2 \,dx\,dt \\\nonumber
&\le\sum_{|\alpha|<m}(-1)^{|\alpha|}\int_{Q_{4R}^+}\zeta(x) \mu^{\alpha_d-2m}\tilde{f}_\alpha D^\alpha w\,dx\,dt\\ \nonumber
&\quad -\int_{Q_{4R}^+}\zeta(\tilde{A}^{\hat{\alpha}\hat{\alpha}}(t)-\tilde{A}^{\hat{\alpha}\hat{\alpha}}(0))D^{\hat{\alpha}}vD^{\hat{\alpha}}w\,dx\,dt
\\\nonumber
 &\quad +(-1)^m\int_{Q_{4R}^+}\zeta(x)(\hat{f}_{\hat{\alpha}}(t,x)-\hat{f}_{\hat{\alpha}}(0,0))D^{\hat{\alpha}} w \,dx\,dt\\\nonumber
&\quad +N\sum_{^{|\alpha|=m}_{\alpha\neq\hat{\alpha}}}(-1)^m\int_{Q_{4R}^+}\zeta(x)(\hat{f}_\alpha(t,x)-\hat{f}_\alpha(t,0))D^\alpha w \,dx\,dt\\
&\quad +\sum_{j=1}^{d-1}\int_{Q_{4R}^+}\zeta (D_j^mv(t,x)-D_j^mv(t,0))D^m_j w \,dx\,dt.\label{eq 7.41}
\end{align}
The first term on the right-hand side of \eqref{eq 7.41} can be dealt with in the same way as \eqref{eq 7.37} in Lemma \ref{tan holder}. Indeed, we use the same $h$ as in the proof of Lemma \ref{tan holder}, i.e., $h=w-P(x)$ where $P(x)$ is the vector-valued polynomial as in Lemma \ref{tech 3}. Therefore, the inequality above holds with $h$ in place of $w$ in the first term on the right-hand side of the inequality above and the first term can be estimated as follows:
\begin{align*}
&\int_{Q_{4R}^+}|\zeta \mu^{\alpha_d-2m}\tilde{f}_\alpha D^\alpha h |\,dx\,dt\le N\mu^{\alpha_d-2m}\|\tilde{f}_\alpha\|_{L_\infty(Q_{4R}^+)}|Q_{4R}^+|^\.5\|D^\alpha h\|_{L_2(Q_{4R}^+)}\nonumber.
\end{align*}
Then we apply Lemma \ref{tech 3} to $h$, {which satisfies}  \eqref{eq 3.38}, and note that $\tilde{A}^{\hat{\alpha}\hat{\alpha}}\in C^{\frac{a}{2m}}(\bR)$ to {bound}  the first term {by}
\begin{align*}
& N \mu^{\alpha_d-2m}\|\tilde{f}_\alpha\|_{L_\infty(Q_{4R}^+)}|Q_{4R}^+|^\.5 \big(R^{m-|\alpha|}\|D^m w\|_{L_2(Q_{4R}^+)}\\
&+\sum_{|\beta|\le m}R^{3m+d/2-|\alpha|-|\beta|}\|\overline{f}_\beta\|_{L_\infty(Q_{4R}^+)}\\
&+R^{2m+d/2-|\alpha|+a}([D_{x^\prime}^m v]_{a,Q_{4R}^+}^\ast+\|D_d^m v\|_{L_\infty(Q_{4R}^+)})\big).
\end{align*}
Recall the definition of $\overline{f}_\alpha$ and notice that
$$\|\overline{f}_\alpha\|_{L_\infty(Q_{4R}^+)}\le [\tilde{f}_\alpha]_{a,Q_{4R}^+}^\ast+N_2[D_{x^\prime}D^{m-1}v]^*_{a,Q_{4R}^+}+N_2\mu^{-1}[D_d^m v]^*_{a,Q_{4R}^+}$$
for $|\alpha|=m$ {but} $\alpha\neq \hat{\alpha}$, and
\begin{align*}
&\|\overline{f}_{\hat{\alpha}}\|_{L_\infty(Q_{4R}^+)}\\
&\le [\tilde{f}_{\hat{\alpha}}]_{\frac{a}{2m},a,Q_{4R}^+}+N_2\langle{\tilde{A}}^{\hat{\alpha}\beta}\rangle_{\frac{a}{2m}}\|D^m v\|_{L_\infty(Q_{4R}^+)}+N_2[D_{x^\prime}D^{m-1} v]_{\frac{a}{2m},a,Q_{4R}^+}.
\end{align*}
We obtain
\begin{align*}
&\sum_{|\alpha|<m}\int_{Q_{4R}^+}|\zeta(x)\mu^{\alpha_d-2m}\tilde{f}_\alpha D^\alpha w|\,dx\,dt\\
&\le \varepsilon\|D^m w\|^2_{L_2(Q_{4R}^+)}+N_2(\varepsilon)\sum_{|\alpha|<m}\|\tilde{f}_\alpha\|^2_{L_\infty(Q_{4R}^+)}R^{2+2m+d}\\
&\quad+N_2(\varepsilon)(\sum_{^{|\alpha|=m}_{\alpha\neq\hat{\alpha}}}[\tilde{f}_\alpha]_{a, Q_{4R}^+}^{\ast2}+[\tilde{f}_{\hat{\alpha}}]^2_{\frac{a}{2m},a,Q_{4R}^+}
+[D_{x^\prime}D^{m-1}v]^{2}_{\frac{a}{2m}, a, Q_{4R}^+}\\
&\quad+\mu^{-1}[D_d^m v]^2_{\frac{a}{2m},a,Q_{4R}^+})R^{2a+d+2m}+N_1\|D^m v\|^2_{L_\infty(Q_{4R}^+)}R^{2a+d+2m}
\end{align*}
provided that $\mu{\ge 1}$.

For the other terms on the right-hand side of \eqref{eq 7.41}, following the proof of Lemma \ref{tan holder}, we apply Young's inequality so that, for any $\varepsilon>0$, the right-hand side of \eqref{eq 7.41} is bounded by
\begin{align}\nonumber
& \varepsilon \|D^m w\|^2_{L_2(Q_{4R}^+)}+N_2(\varepsilon)\big(\sum_{^{|\alpha|=m}_{\alpha\neq\hat{\alpha}}}[\tilde{f}_\alpha]_{a, Q_{4R}^+}^{\ast2}+[\tilde{f}_{\hat{\alpha}}]^2_{\frac{a}{2m},a,Q_{4R}^+}\\
&+[D_{x^\prime}D^{m-1} v]^{2}_{\frac{a}{2m},a,Q_{4R}^+}+\mu^{-1}[D_d^m v]^2_{\frac{a}{2m},a, Q_{4R}^+}\big)R^{2a+2m+d}\nonumber\\
&+N_1\|D^m v\|^2_{L_\infty(Q_{4R}^+)}R^{2a+2m+d}
+N_2\sum_{|\alpha|<m}\|\tilde{f}_\alpha\|^2_{L_\infty(Q_{4R}^+)}R^{2+2m+d}.\label{eq 7.42}
\end{align}
After choosing $\varepsilon$ sufficiently small, by the triangle inequality, \eqref{eq 3.39} and \eqref{eq 7.42} we obtain
\begin{align*}
&\int_{Q_r^+(X_0)}|D^m_d v-c|^2\,dx\,dt\\
&\le N_2 (\frac{r}{R})^{4m+d}\int_{Q_R^+(X_0)}|D^m_d v-c|^2\,dx\,dt+IR^{2a+2m+d},
\end{align*}
where
\begin{align*}
I:=&N_2(\sum_{^{|\alpha|=m}_{\alpha\neq\hat{\alpha}}}[\tilde{f}_\alpha]^{\ast2}_{ a, Q_{4R}^+} +N_1\|D^m v\|^2_{L_\infty(Q_{4R}^+)}+\sum_{|\alpha|<m} \|\tilde{f}_\alpha\|_{L_\infty(Q_{4R}^+)}^2 \\
&+[\tilde{f}_{\hat{\alpha}}]^2_{\frac{a}{2m},a,Q_{4R}^+}+[D_{x^\prime}D^{m-1} v]^{2}_{\frac{a}{2m},a,Q_{4R}^+(X_0)}+\mu^{-1}[D_d^m v]^2_{\frac{a}{2m},a,Q_{4R}^+}).\\
 \end{align*}
From Lemma \ref{tech}, we know that
\begin{align*}
&\int_{Q_r^+(X_0)}|D^m_d v-c|^2\,dx\,dt\\
&\le N_2 (\frac{r}{R})^{2a+2m+d}\int_{Q_R^+(X_0)}|D^m_d v-c|^2\,dx\,dt+Ir^{2a+2m+d}.
\end{align*}
By Lemma \ref{holder}, we get
\begin{align*}
&[D_d^m v]^2_{\frac{a}{2m},a,Q_R^+}\le N_2(R^{-(2a+d+2m)}\|D^m v-c\|^2_{L_2(Q_{4R}^+)}+I)\\
&\le N_2([D_{x^\prime}D^{m-1}v]^{2}_{\frac{a}{2m},a,Q_{4R}^+}+\mu^{-1}[D_d^m v]^{2}_{\frac{a}{2m},a,Q_{4R}^+}+\tilde{G}^{2})+N_1\|D^m v\|^{2}_{L_\infty(Q_{4R}^+)}.
\end{align*}
Therefore, we prove the lemma.
\end{proof}

\subsubsection{General systems with coefficients depending on both $x$ and $t$.}
Similar to Lemma \ref{nor holder1}, we can estimate the highest normal derivative in the case of variable coefficients depending on both $x$ and $t$. As before, we need more regularity assumptions on $\tilde{A}^{\alpha\beta}$ and $\tilde{f}_\alpha$.  Similar to Lemma \ref{high tan} and Corollary \ref{cor tan}, following the proof of Lemma \ref{nor holder1}, we can prove the lemma below.

\begin{lemma}\label{high nor}
Assume that $a\in(0,1)$, $v\in C_{loc}^{\infty}(\overline{{\o_\infty^+}})$ satisfies
\begin{equation*}
v_t+(-1)^m \sum_{|\alpha|=|\beta|=m}\mu^{\alpha_d+\beta_d-2m}D^\alpha(\tilde{A}^{\alpha\beta}D^\beta v)=\sum_{|\alpha|\le m}\mu^{\alpha_d-2m}D^\alpha \tilde{f}_\alpha
\end{equation*}
in $Q_{4R}^+$ with the conormal derivative boundary condition on $\{x_d=0\}\cap Q_{4R}$ and  $\tilde{f}_{\hat{\alpha}} \in C^{\frac{a}{2m}, a}, \tilde{f}_\alpha\in C^{a\ast}$ if $|\alpha| =m$ and $\alpha\neq\hat{\alpha}$, $\tilde{f}_\alpha \in L_\infty$ if $|\alpha|< m$, $\tilde{A}^{\hat{\alpha}\beta}\in C^{\frac{a}{2m}, a}$, and $\tilde{A}^{\alpha\beta}\in C^{a\ast}$ for the other $|\alpha|=m$. Then there exist two constants $N_1=N_1(d,n,m,\delta,K,\|\tilde{A}^{\hat{\alpha}\beta}\|_{\frac{a}{2m}, a}, \|\tilde{A}^{\alpha\beta}\|_{a}^\ast, R,a)$ and $N_2=N_2(d,n,m,\delta,K,R,a)$ such that
\begin{align*}
[D_d^m v]_{\frac{a}{2m},a,Q_{R}^+}&\le N_1\|D^m v\|_{L_\infty(Q_{4R}^+)}+N_2([D_{x^\prime}D^{m-1}v]_{\frac{a}{2m},a,Q_{4R}^+}\\
&\quad+\mu^{-1}[D_d^m v]_{{\frac{a}{2m},a,}Q_{4R}^+}+ \tilde{G}),
\end{align*}
where $\tilde{G}$ is defined in \eqref{def G}.
\end{lemma}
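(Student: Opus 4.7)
The plan is to replay the proof of Lemma \ref{nor holder1} with an additional spatial freezing of the leading coefficients, exactly as in the passage from Lemma \ref{tan holder} to Lemma \ref{high tan}. Fix $X_0 = (t_{X_0}, x_0', 0) \in \{x_d = 0\}\cap Q_R$ (the interior case is analogous). I would freeze $\tilde A^{\hat\alpha\hat\alpha}$ at the single spacetime point $(0, x_0')$ and each other leading coefficient $\tilde A^{\alpha\beta}(t,x)$ at $x = x_0'$ only (keeping its $t$-dependence), rewriting the equation as
\begin{align*}
&v_t+(-1)^m\bigl(D^{\hat\alpha}(\tilde A^{\hat\alpha\hat\alpha}(0,x_0')D^{\hat\alpha}v)+\cD^m_{d-1}v\bigr)\\
&\quad=\sum_{|\alpha|\le m}D^\alpha\hat f_\alpha+(-1)^m\cD^m_{d-1}v+(-1)^{m+1}D^{\hat\alpha}\bigl((\tilde A^{\hat\alpha\hat\alpha}(t,x)-\tilde A^{\hat\alpha\hat\alpha}(0,x_0'))D^{\hat\alpha}v\bigr),
\end{align*}
where $\hat f_\alpha$ absorbs $\mu^{\alpha_d-2m}\tilde f_\alpha$ together with the mixed-index leading terms (now also carrying the spatial freezing errors of the other $\tilde A^{\alpha\beta}$). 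The left-hand side is precisely the constant-coefficient operator treated in Lemma \ref{const coeff}.

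Next, I would follow the three-step construction from Lemma \ref{nor holder1}: (i) use Theorem \ref{L_2} to solve for an auxiliary $w\in\h_2^m((-(4R)^{2m},0)\times\bR^d_+)$ whose source is $\zeta$ times the modified right-hand side (with $\zeta$ a cutoff supported in $Q_{4R}^+(X_0)$), with conormal BC on $\{x_d=0\}$ and zero initial data, and mollify to make $w$ smooth; (ii) apply Lemma \ref{const coeff} to $\mathfrak v := v - w$ (which solves a constant-leading-coefficient system on $Q_{2R}^+(X_0)$) to obtain
\begin{equation*}
\int_{Q_r^+(X_0)}|D_d^m\mathfrak v-c|^2\,dx\,dt\le N(r/R)^{4m+d}\int_{Q_R^+(X_0)}|D_d^m\mathfrak v-c|^2\,dx\,dt;
\end{equation*}
(iii) run the $L^2$ energy identity for $w$, where the new spatial-freezing terms are controlled by Cauchy--Schwarz using the pointwise bounds $\|\tilde A^{\hat\alpha\hat\alpha}(\cdot,\cdot)-\tilde A^{\hat\alpha\hat\alpha}(0,x_0')\|_{L_\infty(Q_{4R}^+)}\le N\|\tilde A^{\hat\alpha\hat\alpha}\|_{\frac{a}{2m},a}R^a$ and $\|\tilde A^{\alpha\beta}(t,\cdot)-\tilde A^{\alpha\beta}(t,x_0')\|_{L_\infty(Q_{4R}^+)}\le N\|\tilde A^{\alpha\beta}\|_a^\ast R^a$. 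Assembling these via the triangle inequality and applying Lemma \ref{tech} and Lemma \ref{holder}(ii) would then yield the claimed Hölder bound on $D_d^m v$.

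The main obstacle is the $(\hat\alpha,\hat\alpha)$ spatial-freezing correction, which unlike the mixed-index corrections carries no $\mu^{-1}$ prefactor and hence must be paid for by the full $C^{\frac{a}{2m},a}$ regularity of $\tilde A^{\hat\alpha\hat\alpha}$. The key saving is that this correction enters only through the $L^2$ energy identity for $w$, so Cauchy--Schwarz bounds it by $\varepsilon\|D^mw\|_{L_2(Q_{4R}^+)}^2+N(\varepsilon)\|\tilde A^{\hat\alpha\hat\alpha}\|_{\frac{a}{2m},a}^2R^{2a+2m+d}\|D^mv\|_{L_\infty(Q_{4R}^+)}^2$, which is an $L_\infty$ (not Hölder) norm of $D^m v$. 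After the Campanato step this ends up in the $N_1\|D^mv\|_{L_\infty(Q_{4R}^+)}$ term of the final estimate (with $N_1$ depending on $\|\tilde A^{\hat\alpha\beta}\|_{\frac{a}{2m},a}$), and the only $[D_d^mv]$ semi-norm remaining on the right-hand side is the $\mu^{-1}$ one inherited directly from Lemma \ref{nor holder1} (coming from the fact that for $(\alpha,\beta)\ne(\hat\alpha,\hat\alpha)$ one has $\mu^{\alpha_d+\beta_d-2m}\le\mu^{-1}$).
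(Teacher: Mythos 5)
Your proposal is correct and follows essentially the same route the paper intends: combining the structure of Lemma \ref{nor holder1} (isolate the $(\hat\alpha,\hat\alpha)$ term, add $\cD^m_{d-1}$ and a frozen $D^{\hat\alpha}(\tilde A^{\hat\alpha\hat\alpha}(X_0)D^{\hat\alpha}\cdot)$, decompose via an auxiliary $w$ solved by Theorem \ref{L_2}, apply Lemma \ref{const coeff} to $v-w$, then Lemmas \ref{tech} and \ref{holder}) with the coefficient-freezing argument used in passing from Lemma \ref{tan holder} to Lemma \ref{high tan}. Your key observations — that the $(\hat\alpha,\hat\alpha)$ spatial-freezing correction enters only through the energy identity for $w$ and hence costs only $N_1\|D^m v\|_{L_\infty}$ rather than a Hölder semi-norm of $D_d^m v$, while the surviving $[D_d^m v]$ semi-norm carries the $\mu^{-1}$ factor from the mixed-index scaling — correctly capture the mechanism that makes the estimate close.
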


Further{more}, from Lemma \ref{high nor} we follow the proof of the Theorem \ref{tan thm} moving the lower-order terms to the right-hand side regarded as part of $\tilde{f}_\alpha$ and noticing that
\begin{align*}
&\sum_{|\beta|<m}[\tilde{A}^{\alpha\beta}D^\beta v]_{\frac{a}{2m},a, Q_{4R}^+}\\&\le N\sum_{|\beta|<m}(\|\tilde{A}^{\alpha\beta}\|_{L_\infty}[D^\beta v]_{\frac{a}{2m},a,Q_{4R}^+}+[\tilde{A}^{\alpha\beta}]_{\frac{a}{2m},a}^{}\|D^\beta v\|_{L_\infty(Q_{4R}^+)})
\end{align*}
to get the following lemma.
\begin{lemma}\label{nor v}
Assume that $a\in(0,1)$, $v\in C_{loc}^{\infty}(\overline{{\o_\infty^+}})$ satisfies
\begin{equation*}
v_t+(-1)^m \sum_{|\alpha|\le m,|\beta|\le m}\mu^{\alpha_d+\beta_d-2m}D^\alpha(\tilde{A}^{\alpha\beta}D^\beta v)=\sum_{|\alpha|\le m}\mu^{\alpha_d-2m}D^\alpha \tilde{f}_\alpha
\end{equation*}
in $Q_{4R}^+$ with the conormal derivative boundary condition on $\{x_d=0\}\cap Q_{4R}$ and  $\tilde{f}_{\hat{\alpha}} \in C^{\frac{a}{2m}, a}, \tilde{f}_\alpha\in C^{a\ast}$ if $|\alpha| =m$ and $\alpha\neq\hat{\alpha}$, $\tilde{f}_\alpha \in L_\infty$ if $|\alpha|< m$, $\tilde{A}^{\hat{\alpha}\beta}\in C^{\frac{a}{2m}, a}$, and $\tilde{A}^{\alpha\beta}\in C^{a\ast}$ for the other $|\alpha|=m$. Then there exist two constants $N_1=N_1(d,n,m,\delta,K,\|\tilde{A}^{\hat{\alpha}\beta}\|_{\frac{a}{2m}, a}, \|\tilde{A}^{\alpha\beta}\|_{a}^\ast, R,a)$ and $N_2=N_2(d,n,m,\delta,K,R,a)$ such that
\begin{align*}
[D_d^m v]_{\frac{a}{2m},a,Q_{R}^+}&\le N_1\sum_{|\beta|\le m}\|D^\beta v\|_{L_\infty(Q_{4R}^+)}+N_2([D_{x^\prime}D^{m-1}v]_{\frac{a}{2m},a,Q_{4R}^+}
\\&\quad+\mu^{-1}[D_d^m v]_{{\frac{a}{2m},a,}Q_{4R}^+}+\sum_{|\beta|<m}[D^\beta v]_{\frac{a}{2m},a,Q_{4R}^+}+\tilde{G}),
\end{align*}
where $\tilde{G}$ is defined in \eqref{def G}.
\end{lemma}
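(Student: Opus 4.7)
The plan is to reduce Lemma \ref{nor v} to Lemma \ref{high nor} by moving every term of the operator with $|\alpha|<m$ or $|\beta|<m$ to the right-hand side in divergence form. Concretely, I would rewrite the equation as
\begin{equation*}
v_t+(-1)^m\sum_{|\alpha|=|\beta|=m}\mu^{\alpha_d+\beta_d-2m}D^\alpha(\tilde{A}^{\alpha\beta}D^\beta v)=\sum_{|\alpha|\le m}\mu^{\alpha_d-2m}D^\alpha \hat{g}_\alpha,
\end{equation*}
setting $\hat{g}_\alpha=\tilde{f}_\alpha+(-1)^{m+1}\sum_{|\beta|<m}\mu^{\beta_d}\tilde{A}^{\alpha\beta}D^\beta v$ when $|\alpha|=m$, and $\hat{g}_\alpha=\tilde{f}_\alpha+(-1)^{m+1}\sum_{|\beta|\le m}\mu^{\beta_d}\tilde{A}^{\alpha\beta}D^\beta v$ when $|\alpha|<m$. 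Lemma \ref{high nor} then applies to this rewritten system and yields
\begin{equation*}
[D_d^m v]_{\frac{a}{2m},a,Q_R^+}\le N_1\|D^m v\|_{L_\infty(Q_{4R}^+)}+N_2\big([D_{x^\prime}D^{m-1}v]_{\frac{a}{2m},a,Q_{4R}^+}+\mu^{-1}[D_d^m v]_{\frac{a}{2m},a,Q_{4R}^+}+\hat{G}\big),
\end{equation*}
where $\hat{G}$ is the quantity \eqref{def G} formed from the $\hat{g}_\alpha$.

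The next step is to estimate $\hat{G}$. For $|\alpha|=m$, the extra contributions have the form $\tilde{A}^{\alpha\beta}D^\beta v$ with $|\beta|<m$ and feed into $[\hat{g}_\alpha]^{\ast}_{a,Q_{4R}^+}$ or $[\hat{g}_{\hat{\alpha}}]_{\frac{a}{2m},a,Q_{4R}^+}$. The product H\"older estimate recalled just before the statement gives
\begin{equation*}
[\tilde{A}^{\alpha\beta}D^\beta v]_{\frac{a}{2m},a, Q_{4R}^+}\le N\|\tilde{A}^{\alpha\beta}\|_{L_\infty}[D^\beta v]_{\frac{a}{2m},a,Q_{4R}^+}+N[\tilde{A}^{\alpha\beta}]_{\frac{a}{2m},a}\|D^\beta v\|_{L_\infty(Q_{4R}^+)}.
\end{equation*}
The first summand contributes to $N_2\sum_{|\beta|<m}[D^\beta v]_{\frac{a}{2m},a,Q_{4R}^+}$ in the final estimate, and the second is absorbed into $N_1\sum_{|\beta|\le m}\|D^\beta v\|_{L_\infty(Q_{4R}^+)}$, where $N_1$ now also accommodates the H\"older norms of the coefficients and the fixed powers of $\mu$. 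For $|\alpha|<m$, only $\|\hat{g}_\alpha\|_{L_\infty}$ appears in $\hat{G}$; here the additional terms, which include $\tilde{A}^{\alpha\beta}D^\beta v$ with $|\beta|=m$, are harmless because they require only an $L_\infty$ bound, and are absorbed into $N_1\sum_{|\beta|\le m}\|D^\beta v\|_{L_\infty(Q_{4R}^+)}$ without producing any H\"older seminorm of $D_d^m v$ on the right-hand side.

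Collecting these bounds, $\hat{G}$ is controlled by $\tilde{G}$ plus the remaining terms of the stated inequality, which proves the lemma. The only subtle point, and the one to check carefully, is the last sentence above: because the $|\alpha|<m$ slot of $\tilde{G}$ only demands $L_\infty$ control of $\hat{g}_\alpha$, pushing the $|\beta|=m$ terms into the forcing does not introduce a H\"older seminorm of the highest-order normal derivative that would be impossible to absorb. Everything else is straightforward bookkeeping once Lemma \ref{high nor} and the product H\"older rule are in hand.
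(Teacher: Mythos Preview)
Your proposal is correct and follows essentially the same approach as the paper: move the lower-order terms (those with $|\alpha|<m$ or $|\beta|<m$) to the right-hand side, apply Lemma \ref{high nor}, and control the new forcing via the product H\"older rule $[\tilde{A}^{\alpha\beta}D^\beta v]_{\frac{a}{2m},a}\le N\|\tilde{A}^{\alpha\beta}\|_{L_\infty}[D^\beta v]_{\frac{a}{2m},a}+N[\tilde{A}^{\alpha\beta}]_{\frac{a}{2m},a}\|D^\beta v\|_{L_\infty}$. Your observation that the $|\alpha|<m$ slot of $\tilde G$ needs only $L_\infty$ control, so that pushing the $|\beta|=m$ terms there does not generate any extra H\"older seminorm of $D_d^m v$, is exactly the point that makes the argument close; the paper leaves this implicit.
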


\begin{remark}\label{tan v}Note that we can estimate $D_{x^\prime}D^{m-1}v$ as well. In fact, all the proofs hold with $v$ in place of $u$. The only difference is that the constant $N$ in Theorem \ref{tan thm} may depend on $\mu$.
\end{remark}

\section{Proof of Theorem \ref{thm schauder}}
The {following} interior Schauder estimates of divergence type higher-order parabolic systems have been established in \cite{DZ12}.
\begin{proposition}\label{prop 8.1}
Assume that $R\in (0, 1]$ and $u\in C_{loc}^\infty(\bR^{d+1})$ satisfies
\begin{equation*}
u_t+(-1)^m \l u=\sum_{|\alpha|\le m}D^\alpha f_\alpha \quad \text{in}\quad Q_{2R}.
\end{equation*}
Suppose that $A^{\alpha\beta}\in C^{a\ast}$, $f_\alpha\in C^{a\ast}$ if $|\alpha|=m$ where $a\in(0,1)$, and $f_\alpha \in L_\infty$ if $|\alpha|<m$. Then there exists a constant $N(d,n,m,\lambda,K, \|A^{\alpha\beta}\|_a^\ast, R, {a})$ such that
\begin{align*}
\langle u\rangle_{\frac{1}{2}+\frac{a}{2m}, Q_R} +[D^m u]_{\frac{a}{2m},a,Q_R}&\le N(\sum_{|\beta|\le m}\|D^{\beta}u\|_{L_\infty(Q_{2R})}+F),
\end{align*}
{where $$F=\sum_{|\alpha|=m}[f_\alpha]_{a, {Q_{2R}}}^\ast+\sum_{|\alpha|<m}\|f_\alpha\|_{L_\infty({Q_{2R}})}.$$}
\end{proposition}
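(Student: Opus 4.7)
The proof proceeds by a Campanato-type mean oscillation argument adapted to the interior setting, paralleling but simpler than the boundary analysis in Section 7, since no conormal boundary condition needs to be tracked. The goal is to establish, for any $X_0\in Q_R$ and $0<r<\rho\le R$, a decay estimate of the form
\begin{equation*}
\int_{Q_r(X_0)}|D^m u-(D^m u)_{Q_r(X_0)}|^2\,dx\,dt \le N\Big(\tfrac{r}{\rho}\Big)^{2a+2m+d}\int_{Q_\rho(X_0)}|D^m u-(D^m u)_{Q_\rho(X_0)}|^2\,dx\,dt + \Psi(\rho),
\end{equation*}
where $\Psi(\rho)=N\rho^{2a+2m+d}(\sum_{|\beta|\le m}\|D^\beta u\|_{L_\infty(Q_{2R})}^2+F^2)$. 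Once this is in hand, Lemma \ref{holder}(i) (the interior Campanato theorem) delivers the desired bound on $[D^m u]_{a/2m,a,Q_R}$.

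First I would freeze the leading coefficients at $X_0$ and rewrite the equation as $u_t+(-1)^m\mathcal{L}_{0,X_0}u=\sum_{|\alpha|\le m}D^\alpha \tilde f_\alpha$, where $\mathcal{L}_{0,X_0}=\sum_{|\alpha|=|\beta|=m}D^\alpha(A^{\alpha\beta}(t,x_0)D^\beta\cdot)$ depends only on $t$, and $\tilde f_\alpha$ gathers the original $f_\alpha$, the lower-order terms, and the frozen-coefficient perturbation $(A^{\alpha\beta}(t,x_0)-A^{\alpha\beta}(t,x))D^\beta u$. Pick a cutoff $\zeta$ adapted to $Q_{2R}$ and, on the whole space, solve $w_t+(-1)^m\mathcal{L}_{0,X_0}w=\sum D^\alpha(\zeta \bar f_\alpha)$, where $\bar f_\alpha$ equals $\tilde f_\alpha$ minus its value at an appropriate reference point for $|\alpha|=m$ (and $\tilde f_\alpha$ itself otherwise). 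A standard energy estimate, analogous to Theorem \ref{L_2} on the whole space, yields $\|D^m w\|_{L_2(Q_{2R})}^2\le N\rho^{2a+2m+d}(\|D^m u\|_{L_\infty(Q_{2R})}^2+F^2)$, using that the oscillations of $A^{\alpha\beta}$ and of $f_\alpha$ ($|\alpha|=m$) on $Q_\rho$ are bounded by $N\rho^a$ times their $C^{a*}$ seminorms, and using Lemma \ref{tech 3} to handle the lower-order $\tilde f_\alpha$.

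For $v:=u-w$, which satisfies $v_t+(-1)^m\mathcal{L}_{0,X_0}v=0$ in $Q_\rho(X_0)$ with $t$-only leading coefficients, I would invoke interior Liouville-type regularity in the whole space: differentiating the equation in $x$, applying the interior $L_2$ estimate (the interior counterpart of Lemma \ref{L2 local}, which is in \cite{DK10}), and bootstrapping via the parabolic Sobolev embedding (as in Lemma \ref{regularity lemma}) yields for any $\gamma\in(0,1)$
\begin{equation*}
\int_{Q_r(X_0)}|D^m v-(D^m v)_{Q_r(X_0)}|^2\,dx\,dt\le N(r/\rho)^{2\gamma+2m+d}\int_{Q_\rho(X_0)}|D^m v-(D^m v)_{Q_\rho(X_0)}|^2\,dx\,dt.
\end{equation*}
Choosing $\gamma>a$, combining this with the bound on $w$ via the triangle inequality, and then applying Lemma \ref{tech} produces the desired decay. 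Lemma \ref{holder}(i) then yields $[D^m u]_{a/2m,a,Q_R}$ with the right-hand side claimed in the proposition, after a standard interpolation to absorb the lower-order terms.

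It remains to estimate $\langle u\rangle_{1/2+a/(2m),Q_R}$. For this I would fix $(s,x_0),(t,x_0)\in Q_R$ and test the weak form of the equation against a rescaled smooth cutoff $\xi_r(\cdot-x_0)$ of unit integral supported in $B_r(x_0)$ with $r=|t-s|^{1/(2m)}$; after moving all spatial derivatives onto $\xi_r$, the identity $\int\xi_r (u(t,\cdot)-u(s,\cdot))\,dx = \int_s^t\langle u_\tau,\xi_r\rangle\,d\tau$ is controlled by $|t-s|\cdot r^{-m-d}$ times $\|D^m u\|_{L_\infty}+F$ on the $Q_R$-portion involving the leading and $|\alpha|=m$ terms, which after substituting $r=|t-s|^{1/(2m)}$ gives $|t-s|^{1/2-d/(2m)}\cdot(\text{bounded})$; combining this averaged-in-$x$ time increment with the already-established Hölder estimate $|u(t,y)-u(t,x_0)|\le N|y-x_0|^{m+a}$ (coming from $D^m u\in C^{a*}$) and optimizing in $r$ recovers the pointwise exponent $1/2+a/(2m)$ in $t$. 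The chief technical obstacle is the last step: the divergence form of the right-hand side means $u_t$ only lives in $\mathbb{H}^{-m}_p$, and the careful matching of the $x$-scale to $|t-s|^{1/(2m)}$, together with the absorption of lower-order and perturbation terms by interpolation, is where the bookkeeping is most delicate.
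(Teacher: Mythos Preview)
The paper does not prove Proposition~\ref{prop 8.1}; it simply cites \cite{DZ12}. Your Campanato argument for $[D^m u]_{a/(2m),a,Q_R}$ is correct and is indeed the interior analogue of the boundary computation carried out in Lemmas~\ref{tan holder} and~\ref{high tan}: freeze coefficients, split $u=v+w$, use the energy estimate for $w$ and the interior Caccioppoli/bootstrap regularity for $v$, then iterate via Lemma~\ref{tech}.

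The gap is in your treatment of $\langle u\rangle_{1/2+a/(2m),Q_R}$. Testing against a generic unit-integral cutoff $\xi_r$ does not give the required control: without additional structure you only get
\[
\Big|u(t,x_0)-\int\xi_r(x_0-y)u(t,y)\,dy\Big|\le Nr\|Du\|_{L_\infty},
\]
not $Nr^{m+a}$. The missing ingredient is the \emph{vanishing-moment} test function of Lemma~\ref{tech function} (equivalently, Lemma~3.3 of \cite{DZ12}): one needs $\int\xi(y)y^\alpha\,dy=0$ for $0<|\alpha|\le m$ so that the full $m$th-order Taylor polynomial $T_{m,x_0}u(t,\cdot)$ integrates against $\xi_r$ to $u(t,x_0)$, yielding $|u(t,x_0)-\int\xi_r u|\le N[D^m u]^\ast_a\,r^{m+a}$. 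Your exponent bookkeeping is also off: integrating $|D^m\xi_r|\le Nr^{-m-d}$ over $B_r$ gives $r^{-m}$, not $r^{-m-d}$, so the time-integral term is $|t-s|\,r^{-m}$ rather than producing $|t-s|^{1/2-d/(2m)}$. To recover the extra factor $r^a$ in that term you must, exactly as in Proposition~\ref{estimate in t}, subtract $f_\alpha(t,x_0)$ and rewrite $D^\alpha(A^{\alpha\beta}D^\beta u)=D^\alpha\big((A^{\alpha\beta}-A^{\alpha\beta}(t,x_0))D^\beta u\big)+D^\alpha\big(A^{\alpha\beta}(t,x_0)(D^\beta u-D^\beta u(t,x_0))\big)$ before integrating by parts; this yields $|t-s|\,r^{-m+a}$, and with $r=|t-s|^{1/(2m)}$ one obtains the correct exponent $\tfrac12+\tfrac{a}{2m}$. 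There is no optimization in $r$ left to do once these subtractions are made.
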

We have the following global estimate in $(0,T)\times \bR^{d}_+$.
\begin{proposition}\label{final pro}
Let $a\in (0,1)$, $u\in C_{loc}^{\infty}(\overline{\bR_+^{d+1}})$ satisfy
\begin{equation*}
u_t+(-1)^m\l u=\sum_{|\alpha|\le m}D^\alpha f_\alpha \quad \text{in}\quad (0,T)\times\bR^{d}_+
\end{equation*}
with the conormal derivative boundary condition on $(0,T)\times \{x:x_d=0\}$, and $f_{\hat{\alpha}} \in C^{\frac{a}{2m}, a}, f_\alpha\in C^{a\ast}$ if $|\alpha| =m$ and $\alpha\neq\hat{\alpha}$, $f_\alpha \in L_\infty$ if $|\alpha|< m$, $A^{\hat{\alpha}\beta}\in C^{\frac{a}{2m}, a}$, and $A^{\alpha\beta}\in C^{a\ast}$ for the other $\alpha$. Then there exists a constant $N=N(d,n,m,\delta,K,\|A^{\hat{\alpha}\beta}\|_{\frac{a}{2m},a},\|A^{\alpha\beta}\|_a^\ast, a)$ such that
\begin{align*}
&[D^m u]_{\frac{a}{2m},a,(0,T)\times \bR^{d}_+}\\
&\le N(\|D^m u\|_{L_\infty((0,T)\times\bR^{d}_+)}+\sum_{|\beta|<m}\|D^\beta u\|_{\frac{a}{2m},a, (0,T)\times\bR^{d}_+}+G).
\end{align*}
\end{proposition}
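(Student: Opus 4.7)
The plan is to split $D^m u$ into the tangential part $D_{x'}D^{m-1}u$, which already encompasses every $D^\alpha u$ with $|\alpha|=m$ and $\alpha\ne\hat{\alpha}$, and the normal component $D_d^m u$. Theorem \ref{tan thm} delivers the H\"older estimate for the tangential part under even weaker assumptions on the coefficients, so the main task is controlling $[D_d^m u]_{\frac{a}{2m},a}$ near the flat boundary and then assembling the boundary and interior estimates into a global bound.

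For a boundary point $X_0\in\{x_d=0\}$ and some $R\in(0,1]$, I would introduce the scaling
\begin{equation*}
v(t,x',x_d)=u(\mu^{-2m}t,x',\mu^{-1}x_d)
\end{equation*}
with $\mu\ge 1$ to be chosen. The function $v$ satisfies the rescaled system of Lemma \ref{nor v}, and the scaled coefficients $\tilde A^{\alpha\beta}$ have $C^{\frac{a}{2m},a}$ and $C^{a\ast}$ seminorms bounded by those of $A^{\alpha\beta}$ uniformly for $\mu\ge 1$. Combining Lemma \ref{nor v} with the bound for $[D_{x'}D^{m-1}v]_{\frac{a}{2m},a}$ coming from Remark \ref{tan v} (Theorem \ref{tan thm} adapted to the scaled equation) yields a schematic estimate of the form
\begin{align*}
&[D^m v]_{\frac{a}{2m},a,Q_R^+(X_0)}\le N_2\mu^{-1}[D_d^m v]_{\frac{a}{2m},a,Q_{4R}^+(X_0)}\\
&\quad+C\Bigl(\sum_{|\beta|\le m}\|D^\beta v\|_{L_\infty(Q_{4R}^+(X_0))}+\sum_{|\beta|<m}[D^\beta v]_{\frac{a}{2m},a,Q_{4R}^+(X_0)}+\tilde G\Bigr).
\end{align*}
Choosing $\mu$ large enough that $N_2\mu^{-1}<1/2$ and running a nested-cylinder absorption argument based on Lemma \ref{tech} eliminates the first term on the right, and undoing the scaling produces a local H\"older bound for $D^m u$ on a boundary cylinder $Q_{R_*}^+(X_0)$ for a fixed small $R_*$.

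To obtain the global bound, I would argue pointwise on pairs: for $P_1,P_2\in (0,T)\times\bR^d_+$ at parabolic distance at most $R_*$, either the smaller of $(P_1)_d,(P_2)_d$ does not exceed $4R_*$ and both points lie in a common boundary cylinder where the above estimate applies, or both are interior points and Proposition \ref{prop 8.1} supplies the same type of bound; any pair at parabolic distance exceeding $R_*$ contributes at most $2R_*^{-a}\|D^m u\|_{L_\infty}$ to the H\"older quotient. Summing the three contributions and gathering the $\|D^\beta u\|_{L_\infty}$ and $[D^\beta u]_{\frac{a}{2m},a}$ terms for $|\beta|<m$ into the single norm $\sum_{|\beta|<m}\|D^\beta u\|_{\frac{a}{2m},a,(0,T)\times\bR^d_+}$ yields the stated inequality.

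The main obstacle I anticipate is the coordination of the scaling parameter $\mu$ with the radius $R$: the constants $N_1,N_2$ of Lemma \ref{nor v} depend on $R$, and the absorption needs $\mu$ large depending on $N_2(R)$, so one must first fix $R_*$ small enough for the covering step to go through and only then choose $\mu=\mu(R_*, N_2)$. A secondary technicality is that the $\mu^{-1}$-term in Lemma \ref{nor v} sits on the larger cylinder $Q_{4R}^+$ rather than on $Q_R^+$, so the absorption is not direct; one has to iterate along a geometric sequence of radii, which is routine given the Campanato-type machinery already set up in the proof of Lemma \ref{nor holder1}.
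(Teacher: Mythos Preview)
Your overall strategy---split $D^m u$ into $D_{x'}D^{m-1}u$ and $D_d^m u$, invoke Theorem \ref{tan thm} / Remark \ref{tan v} for the former, Lemma \ref{nor v} for the latter, and combine with the interior estimate Proposition \ref{prop 8.1}---matches the paper. The difference is the order in which you absorb and globalize.

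The paper scales \emph{once globally} to $v$ on $(0,\mu^{2m}T)\times\bR^d_+$, and then applies partition of unity together with Lemma \ref{nor v}, Remark \ref{tan v}, and Proposition \ref{prop 8.1} to obtain directly a \emph{global} inequality
\[
[D^m v]_{\frac{a}{2m},a,(0,T_1)\times\bR^d_+}\le N_1(\mu)(\ldots)+N_2\bigl(\mu^{-1}[D_d^m v]_{\frac{a}{2m},a,(0,T_1)\times\bR^d_+}+\tilde G\bigr),
\]
with $N_2$ independent of $\mu$. Both sides live on the same domain, so choosing $\mu$ large absorbs the $\mu^{-1}$ term immediately; no nested-cylinder iteration is needed, and the two obstacles you flagged simply do not arise.

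Your route---absorb locally on $Q_R^+(X_0)$ first, then patch---forces you to swallow a $\mu^{-1}[D_d^m v]$ term on the \emph{larger} cylinder $Q_{4R}^+$, which you propose to handle via Lemma \ref{tech}. That is the wrong tool: Lemma \ref{tech} is the Campanato iteration $\Phi(\rho)\le A(\rho/r)^a\Phi(r)+Br^b$ governing decay as $\rho\to 0$, whereas what you need is an absorption of the type $\Phi(R)\le\epsilon\,\Phi(4R)+C$, a different (though still standard) iteration. Your approach can be salvaged, but the cleaner fix is simply to reverse the order: run your pointwise-pair covering argument \emph{first} to pass from the local estimates to a global one, bound $[D_d^m v]_{\frac{a}{2m},a,Q_{4R}^+(X_0)}$ by the global seminorm, and absorb directly---which is exactly the paper's proof.
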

\begin{proof}
We use a scaling argument and consider the equation of $v$ in the corresponding domain $T_\mu((0,T)\times \bR^{d}_+)$. If we can prove the inequality above for $v$, then after changing back to $u$, we prove the lemma. Applying the argument of partition of the unity, translation of the coordinates,  Lemma \ref{nor v}, Remark \ref{tan v} and the interior estimate Proposition \ref{prop 8.1},  we know that
\begin{align*}
&[D^m v]_{\frac{a}{2m},a, (0,{T_1})\times \bR^{d}_+}\\
&\le N_1(\mu)(\|D^m v\|_{L_\infty((0,T_1)\times \bR^{d}_+)}+\sum_{|\beta|<m}\|D^\beta v\|_{\frac{a}{2m},a, (0,T_1)\times \bR^{d}_+})\\
&\quad+N_2(\mu^{-1}[D_d^m v]_{\frac{a}{2m},a, (0,T_1)\times \bR^{d}_+}+\tilde{G}),
\end{align*}
where $N_1(\mu)$ depends on $\mu$, but $N_2$ does not, and $T_1=\mu^{2m}T$. Let $\mu$ be sufficiently large such that $N_2\mu^{-1}[D_d^m v]_{\frac{a}{2m},a, (0,T)\times \bR^{d}_+}$is absorbed to the left-hand side. Then we fix this $\mu$ and obtain that
\begin{align*}
&[D^m v]_{\frac{a}{2m},a, (0,T_1)\times \bR^{d}_+}\\
&\le N(\|D^m v\|_{L_\infty((0,T_1)\times \bR^{d}_+)}+\sum_{|\beta|<m}\|D^\beta v\|_{\frac{a}{2m},a, (0,T_1)\times \bR^{d}_+}+\tilde{G}).
\end{align*}
Therefore, we prove the {proposition}.
\end{proof}

Moreover, we can estimate the regularity of $u$ in the $t$ variable.
First we state a lemma which is a particular case of Lemma 3.3 in \cite{DZ12}.
\begin{lemma}\label{tech function}
There exists a function $\xi\in C^{\infty}_0(B_1^+)$ with unit integral such that for any $0<|\alpha|\le m$,
\begin{equation*}
\int_{B_1^+}\xi(y)y^\alpha\, dy =0.
\end{equation*}
\end{lemma}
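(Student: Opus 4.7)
The plan is to construct $\xi$ in the form $\xi(y) = P(y)\eta(y)$, where $\eta \in C_0^\infty(B_1^+)$ is a fixed nonnegative bump function with $\eta \not\equiv 0$, and $P$ is a real polynomial of degree at most $m$ whose coefficients are chosen to enforce the moment conditions. Concretely, I would write $P(y) = \sum_{|\beta|\le m} c_\beta\, y^\beta$ with unknown coefficients $c_\beta$, so that the conditions $\int_{B_1^+}\xi(y)\,dy = 1$ and $\int_{B_1^+}\xi(y)\,y^\alpha\,dy = 0$ for $0 < |\alpha|\le m$ translate into the linear system
\begin{equation*}
\sum_{|\beta|\le m} M_{\alpha\beta}\, c_\beta = \delta_{\alpha,0}, \qquad |\alpha|\le m,
\end{equation*}
where $M_{\alpha\beta} := \int_{B_1^+} y^{\alpha+\beta}\,\eta(y)\,dy$.

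The key point is then to verify that the matrix $M$ is invertible. I would do this by recognizing $M$ as the Gram matrix of the monomials $\{y^\beta\}_{|\beta|\le m}$ with respect to the inner product $\langle f,g\rangle_\eta := \int_{B_1^+} f(y)g(y)\,\eta(y)\,dy$. For any nonzero polynomial $Q = \sum c_\beta y^\beta$ of degree at most $m$, the function $Q^2 \eta$ is nonnegative and strictly positive on the nonempty open set $\{\eta > 0\}\setminus\{Q=0\}$ (the zero set of a nonzero polynomial has measure zero), so $\langle Q,Q\rangle_\eta > 0$. Hence $M$ is positive definite, and the linear system has a (unique) solution $(c_\beta)$. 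Setting $\xi = P\eta$ then gives a function in $C_0^\infty(B_1^+)$ satisfying all the required moment conditions.

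There is really no serious obstacle here; the only thing worth stating carefully is the nondegeneracy of the Gram matrix $M$, which relies on two basic facts: $\eta$ is positive on an open set, and distinct monomials of bounded degree are linearly independent as functions on any open set. A tidy alternative presentation is via duality: if the linear map $C_0^\infty(B_1^+) \ni \xi \mapsto (\int \xi y^\alpha\,dy)_{|\alpha|\le m} \in \mathbb{R}^N$ failed to be surjective, then some nonzero polynomial $Q$ of degree $\le m$ would satisfy $\int \xi\, Q\,dy = 0$ for all $\xi \in C_0^\infty(B_1^+)$, forcing $Q \equiv 0$ on $B_1^+$ by the fundamental lemma of the calculus of variations, a contradiction. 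Either route yields the desired $\xi$ in one short argument.
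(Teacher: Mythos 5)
Your argument is correct and complete. Note that the paper itself offers no proof of this lemma — it simply cites it as ``a particular case of Lemma 3.3 in \cite{DZ12}'' — so you have supplied the argument the authors left to the reference. The construction you give (take $\xi = P\eta$ with $\eta$ a fixed nonnegative bump in $C_0^\infty(B_1^+)$ and $P$ a polynomial of degree at most $m$ whose coefficients solve the linear moment system, invertibility of the system coming from positive-definiteness of the Gram matrix $M_{\alpha\beta} = \int y^{\alpha+\beta}\eta$) is exactly the standard way such vanishing-moment mollifiers are produced, and the key nondegeneracy step is handled properly: a nonzero polynomial $Q$ of degree $\le m$ gives $Q^2\eta > 0$ on a set of positive measure inside $\{\eta > 0\}$, so $\langle Q,Q\rangle_\eta > 0$. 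Your alternative duality phrasing (surjectivity of $\xi \mapsto (\int \xi\, y^\alpha)_{|\alpha|\le m}$ via the fundamental lemma of the calculus of variations) is equally valid and, if anything, a touch cleaner since it avoids writing down the Gram matrix explicitly. Either version would serve as a self-contained replacement for the citation.
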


Next, we show the estimate in the $t$ variable by applying the method in Proposition 4.1 in \cite{DZ12} which corresponds to the interior estimates.
\begin{proposition}\label{estimate in t}
Suppose that $u \in C^\infty_{loc}(\overline{\mathbb{R}^{d+1}_+})$ and satisfies
\begin{equation*}
u_t+(-1)^m\l u=\sum_{|\alpha|\le m}D^\alpha f_\alpha \quad \text{in} \quad Q_{2R}^+
\end{equation*}
with the conormal derivative boundary condition on $\{x_d=0\}$.
The coefficients of $\l$ and $f_\alpha$ satisfy the same condition as in Theorem \ref{thm schauder}. Then there exists a constant $N=N(d,m,n,\delta,K,\|A^{\alpha\beta}\|^\ast_{a},R,a)$ such that
\begin{equation*}
\langle u\rangle_{\frac{1}{2}+\frac{a}{2m}, Q_R^+} \le N(\|D^m u\|^\ast_{a,Q_{2R}^+}+\sum_{|\beta|<m}\|D^\beta u\|_{L_\infty( Q_{2R}^+)}+F).
\end{equation*}
\end{proposition}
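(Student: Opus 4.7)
\medskip

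\noindent\textbf{Proof proposal.}
The plan is to adapt the interior argument of Proposition 4.1 of \cite{DZ12} to the half-space, exploiting the fact that the auxiliary function $\xi$ from Lemma \ref{tech function} is compactly supported in the open half-ball $B_1^+$ and therefore vanishes near $\{y_d=0\}$. This removes any interaction with the conormal boundary condition and allows us to use a shifted copy of $\xi$ as a test function even when $x_0$ lies on the flat boundary. Fix $\xi \in C_0^\infty(B_1^+)$ with unit integral and vanishing moments up to order $m$, and set $\xi_r(y)=r^{-d}\xi(y/r)$. For $X_0=(t_0,x_0)\in Q_R^+$ and $0<r\le R$, define
\begin{equation*}
g(t)=\int_{\bR^d_+} u(t,y)\,\xi_r(y-x_0)\,dy.
\end{equation*}
Since $\xi_r(\cdot -x_0)$ is supported in $x_0+r\overline{B_1^+}\subset \overline{B_{2R}^+(x_0)}$ and vanishes in a neighborhood of $\{y_d=0\}$, this is a legitimate test function.

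First I would prove that $|g(t)-u(t,x_0)|\le N\,[D^m u]_{a,Q_{2R}^+}^{\ast}\,r^{m+a}$. This follows by Taylor expanding $u(t,\cdot)$ around $x_0$ to order $m$: writing $u=T_m+R$ where $T_m$ is the Taylor polynomial of degree $m$ and $|R(t,y)|\le N[D^m u]_{a}^{\ast}|y-x_0|^{m+a}$, the vanishing-moment property of $\xi$ kills every monomial in $T_m$ of positive degree, so $\int T_m(t,y)\xi_r(y-x_0)\,dy=u(t,x_0)$ and the claimed estimate follows. Next I would compute $g'(t)$ by inserting $\phi(s,y)=\eta(s)\xi_r(y-x_0)$ into the weak formulation of the equation and using the arbitrariness of $\eta$:
\begin{equation*}
g'(s)=-\sum_{|\alpha|,|\beta|\le m}(-1)^{|\alpha|}\int A^{\alpha\beta}D^\beta u\,D^\alpha\xi_r\,dy+\sum_{|\alpha|\le m}(-1)^{|\alpha|}\int f_\alpha\,D^\alpha\xi_r\,dy.
\end{equation*}
No boundary term arises because $\xi_r(\cdot-x_0)$ vanishes in a neighborhood of $\{y_d=0\}$.

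The crux, and the main obstacle, is to show $|g'(s)|\le N\,r^{a-m}\,\bigl(\|D^m u\|_{a,Q_{2R}^+}^{\ast}+\sum_{|\beta|<m}\|D^\beta u\|_{L_\infty}+F\bigr)$ for each term on the right-hand side. For the leading terms ($|\alpha|=|\beta|=m$), using $\int D^\alpha\xi_r\,dy=0$ one replaces $A^{\alpha\beta}(s,y)D^\beta u(s,y)$ by its increment from $y$ to $x_0$; this increment is bounded by $N r^a(\|A^{\alpha\beta}\|_a^\ast\|D^m u\|_{L_\infty}+\|A^{\alpha\beta}\|_{L_\infty}[D^m u]_a^\ast)$ on the support of $\xi_r$, while $\|D^\alpha\xi_r\|_{L_1}\le N r^{-m}$. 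For the lower-order terms ($|\beta|<m$), $D^\beta u$ is automatically $C^{m-|\beta|+a}$ in $x$, so one Taylor expands $D^\beta u(s,\cdot)$ to order $m-|\beta|$ around $x_0$; the polynomial part integrates against $D^\alpha\xi_r$ to give, by the vanishing-moment computation for $\int z^\gamma D^\alpha\xi_r\,dz$, a contribution controlled by lower-order norms of $u$, while the remainder yields the required $r^{a-m}$ scaling. The forcing terms $\int f_\alpha D^\alpha\xi_r\,dy$ are handled analogously: for $|\alpha|=m$ one subtracts $f_\alpha(s,x_0)$ (using $[f_\alpha]_a^\ast$ or $[f_{\hat{\alpha}}]_{a/2m,a}$), and for $|\alpha|<m$ one uses the crude bound $\|f_\alpha\|_{L_\infty}\,r^{-|\alpha|}\le\|f_\alpha\|_{L_\infty}\,r^{1-m}$, which is even better than $r^{a-m}$ for $r\le 1$.

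Finally, given $(t_1,x_0),(t_2,x_0)\in Q_R^+$, I would set $r=|t_1-t_2|^{1/(2m)}$ whenever $r\le R/2$ and combine the two estimates via
\begin{equation*}
|u(t_1,x_0)-u(t_2,x_0)|\le |g(t_1)-g(t_2)|+\sum_{i=1,2}|g(t_i)-u(t_i,x_0)|\le N r^{m+a},
\end{equation*}
where the first difference is estimated by $|t_1-t_2|\sup_s|g'(s)|\le r^{2m}\cdot r^{a-m}=r^{m+a}$. For $r>R/2$ (i.e.\ $|t_1-t_2|$ large relative to $R^{2m}$), one uses $|u(t_1,x_0)-u(t_2,x_0)|\le 2\|u\|_{L_\infty(Q_R^+)}$ absorbed into $\sum_{|\beta|<m}\|D^\beta u\|_{L_\infty}$ at the cost of a constant depending on $R$. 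The most delicate bookkeeping will be in the term-by-term analysis of $|g'(s)|$, where careful use of the vanishing moments of $\xi$ combined with Taylor expansions of $D^\beta u$ and $f_\alpha$ to just the right order is needed to recover the sharp exponent $r^{a-m}$.
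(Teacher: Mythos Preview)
Your proposal is correct and follows essentially the same approach as the paper: both use the moment-vanishing function $\xi\in C_0^\infty(B_1^+)$ of Lemma \ref{tech function} to build a mollified average whose support stays strictly inside $\{x_d>0\}$, Taylor-expand $u(t,\cdot)$ to order $m$ to control $|g(t)-u(t,x_0)|$, and substitute the equation (equivalently, the weak formulation) together with the subtraction $A^{\alpha\beta}(s,y)D^\beta u(s,y)\mapsto$ increment at $x_0$ to bound $|g(t_1)-g(t_2)|$. The only cosmetic differences are that the paper packages the Taylor step via $\overline{u}=u-T_{m,x_0}u+u(t,x_0)$ and handles the lower-order terms with the cruder bound $\int|D^\alpha\eta|\,dy\,d\tau\le N\rho^{2m-|\alpha|}$ (which already beats $\rho^{m+a}$), whereas you propose a finer Taylor expansion of $D^\beta u$; both routes give the same conclusion.
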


\begin{proof}
We estimate $|u(t,x_0)-u(s,x_0)|$, where $(t,x_0)$, $(s,x_0) \in Q_R^+$.
Let $\rho=|t-s|^{\frac{1}{2m}}$ and $\eta(x)=\frac{1}{\rho^d}\xi(\frac{x}{\rho})$, where $\xi(x)$ is the function in Lemma \ref{tech function}. We define
\begin{align*}
\overline{u}(t,y)= u(t,y)-T_{m,{x_0}}u(t,y)+u(t,x_0),
\end{align*}
where $T_{m, {x_0}}u(t,y)$ is the Taylor expansion of $u$ in $y$ at $x_0$ up to $m$th order. Therefore,
\begin{align}\nonumber
&|u(t,x_0)-u(s,x_0)|\le|u(t,x_0)-\int_{B^+_\rho(x_0)}\eta(x_0-y)\overline{u}(t,y)\,dy|\\\nonumber
&+|u(s,x_0)-\int_{B^+_\rho(x_0)}\eta(x_0-y)\overline{u}(s,y)\,dy|\\
&+|\int_{B^+_\rho(x_0)}\eta(x_0-y)\overline{u}(s,y)\,dy-\int_{B^+_\rho(x_0)}\eta(x_0-y)\overline{u}(t,y)\,dy|.\label{eq 7.15}
\end{align}
The first two terms on the right-hand side can be estimated in a similar fashion: noting that $\int_{B_\rho^+} \eta(x)\,dx=1$,
\begin{align*}
&|u(t,x_0)-\int_{B^+_\rho(x_0)}\eta(x_0-y)\overline{u}(t,y)\,dy|\\
&=|\int_{B^+_\rho(x_0)}\eta(x_0-y)(u(t,x_0)-\overline{u}(t,y))\,dy|.
\end{align*}
Since
\begin{align*}
|\overline{u}(t,y)-u(t,x_0)|=|u(t,y)-T_{m,x_0}u(t,y)|\le C [D^m u]^\ast_{a, Q_{2R}^+}|y-x_0|^{m+a},
\end{align*}
we get
\begin{align*}
|u(t,x_0)-\int_{B^+_\rho(x_0)}\eta(x_0-y)\overline{u}(t,y)\,dy|\le C[D^m u]^\ast_{a, Q_{2R}^+} \rho^{m+a}.
\end{align*}
For the last term of the right-hand side of \eqref{eq 7.15},  by the definition of $\eta$,
\begin{align*}
&|\int_{B^+_\rho(x_0)}\eta(x_0-y)\overline{u}(t,y)\,dy-\int_{B^+_\rho(x_0)}\eta(x_0-y)\overline{u}(s,y)\,dy|\\&=|\int_{B^+_\rho(x_0)}\eta(x_0-y)u(t,y)\,dy-\int_{B^+_\rho(x_0)}\eta(x_0-y)u(s,y)\,dy|\\
&=|\int_s^t \int_{B^+_\rho(x_0)}\eta(x_0-y)u_t(\tau,y)\,dy\,d\tau|.
\end{align*}
Because $u$ satisfies the equation,
\begin{align}\nonumber
&|\int_s^t \int_{B^+_\rho(x_0)}\eta(x_0-y)u_t(\tau,y)\,dy\,d\tau|\\&=|\int_s^t \int_{B^+_\rho(x_0)}\eta(x_0-y) (\sum_{|\alpha|\le m, |\beta|\le m}D^\alpha ((-1)^{m+1}A^{\alpha\beta}D^\beta u(\tau,y))\nonumber\\
&\quad+\sum_{|\alpha|\le m}D^\alpha f_\alpha)\,dy\,d\tau|\label{ut estimate}.
\end{align}
We can substitute $f_\alpha$ by $f_\alpha(t,x) -f_\alpha(t,x_0)$ when $|\alpha|=m$ since  $\eta$ has compact support in $B^+_\rho(x_0)$.  Moreover, for $|\alpha|=m$,
\begin{align*}
&D^\alpha (A^{\alpha\beta}D^\beta u)\\
&=D^\alpha \big((A^{\alpha\beta}-A^{\alpha\beta}(t,x_0))D^\beta u\big)+D^\alpha (A^{\alpha\beta}(t,x_0)(D^\beta u-D^\beta u(t,x_0))).
\end{align*}
We plug all these into \eqref{ut estimate} and integrate by parts. It follows easily that
\begin{align*}
&|\int_s^t \int_{B^+_\rho(x_0)}\eta(x_0-y)u_t(\tau,y)\,dy\,d\tau|\\
& \le \sum_{^{|\alpha|=m}_{|\beta|\le m}}\big([A^{\alpha\beta}]^\ast_{a}\|D^\beta u\|_{L_\infty(Q_{2R}^+)}+[D^\beta u]^\ast_{a, Q_{2R}^+}\|A^{\alpha\beta}\|_{L_\infty}\big)\\
&\quad\cdot \rho^a\int_{Q^+_\rho(X_0)}|D^m\eta(x_0-y)|\,dy\,d\tau \\
&\quad +\sum_{^{|\alpha|<m}_{|\beta|\le m}}\|A^{\alpha\beta}\|_{L_\infty}\|D^\beta u\|_{L_\infty(Q_{2R}^+)}\int_{Q_\rho^+(X_0)}|D^\alpha \eta(x_0-y)|\,dy\,d\tau\\
&\quad +\sum_{|\alpha|<m}\|f_\alpha\|_{L_\infty(Q_{2R}^+)}\int_{Q^+_\rho(X_0)}|D^\alpha \eta(x_0-y)|\,dy\,d\tau\\
&\quad +\sum_{|\alpha|=m}\|D^m \eta\|_{L_\infty}[f_\alpha]^\ast_{a, Q_{2R}^+}|Q^+_\rho|\rho^a,
\end{align*}
which is bounded by
\begin{align*}
&C\sum_{|\beta|=m}\|D^\beta u\|^\ast_{a, Q_{2R}^+}\rho^{m+a}+C\sum_{|\alpha|=m}[f_\alpha]^\ast_{a, Q_{2R}^+}\rho^{m+a}\\
&\quad+C\sum_{|\alpha|<m}(\|f_\alpha\|_{L_\infty(Q_{2R}^+)}+\|D^\alpha u\|_{L_\infty(Q_{2R}^+)})\rho^{m+1}.
\end{align*}
Here we used $\|D^\alpha \eta\|_{L_\infty}\le C \rho^{-d-|\alpha|}$ for any $|\alpha|\le m$.
Hence,
\begin{align*}
|u(t,x_0)-u(s,x_0)|\le C\rho^{m+a}(\|D^m u\|^\ast_{a,Q_{2R}^+}+\sum_{|\beta|< m}\|D^\beta u\|_{L_\infty(Q_{2R}^+)}+F).
\end{align*}
 The proof is completed.
\end{proof}

Following the idea in proving Theorem 2.1 of \cite{DZ12}, due to Proposition \ref{final pro}, Proposition \ref{estimate in t}, the standard arguments of partition of the unity and flattening the boundary, we obtain
\begin{align*}
&\langle u\rangle_{\frac{a+m}{2m},(0,T)\times \Omega}+[D^m u]_{\frac{a}{2m},a,(0,T)\times \Omega}\\
&\le N(\|D^m u\|_{L_\infty((0,T)\times \Omega)}+\sum_{|\beta|<m}\|D^\beta u\|_{\frac{a}{2m},a,(0,T)\times \Omega}+G).
\end{align*}
By the interpolation inequalities in H\"older spaces, for instance see Section 8.8 of \cite{Kry96},
\begin{equation*}
\langle u\rangle_{\frac{a+m}{2m},(0,T)\times \Omega}+[D^m u]_{\frac{a}{2m},a,(0,T)\times \Omega}\le N (\|u\|_{L_\infty((0,T)\times \Omega)}+G).
\end{equation*}
Applying the interpolation inequalities again, we get
\begin{equation*}
\|u\|_{L_\infty((0,T)\times \Omega)}\le N(\varepsilon)\|u\|_{L_2((0,T)\times \Omega)}+\varepsilon([D^m u]_{\frac{a}{2m},a,(0,T)\times \Omega}+\langle u\rangle_{\frac{a+m}{2m},(0,T)\times \Omega}).
\end{equation*}
Upon taking $\varepsilon$ sufficiently small, we arrive at
\begin{equation*}
\|u\|_{\frac{a+m}{2m}, a+m,(0,T)\times \Omega}\le N(\|u\|_{L_2((0,T)\times \Omega)}+G),
\end{equation*}
which is \eqref{finaldiv}.
In order to implement the continuity argument to prove the existence of solutions, we need the right-hand side of \eqref{finaldiv} to be independent of $u$ and this leads us to consider the following system:
\begin{align}\label{exist}
&u_t+(-1)^m\l u+\kappa u=\sum_{|\alpha|\le m}D^\alpha f_\alpha\quad \text{in}\quad (0,T)\times \Omega
\end{align}
with the conormal derivative boundary condition on $(0,T)\times\partial \Omega$ and the zero initial condition on $\{0\}\times \Omega$.
We choose $\kappa$ large enough, such that
\begin{equation}\label{l2control}
\|u\|_{L_2((0,T)\times \Omega)} \le NG,
\end{equation}
where $N=N(d,n,m,\lambda,K,\|A^{\alpha\beta}\|_a^\ast,\Omega)$.
To prove \eqref{l2control}, we rewrite the system as
\begin{equation*}
u_t+(-1)^m\l_h u+\kappa u=\sum_{|\alpha|\le m}D^\alpha f_\alpha +(-1)^{m+1}(\l-\l_h)u.
\end{equation*}
By the definition of the conormal derivative boundary condition and the ellipticity of $A^{\alpha\beta}$,
\begin{align}\nonumber
&\delta\|D^m u\|^2_{L_2((0,T)\times \Omega)} +\kappa \|u\|^2_{L_2((0,T)\times \Omega)} \\\nonumber
&\le \sum_{|\alpha|\le m}\int_{(0,T)\times \Omega} (-1)^{|\alpha|} f_\alpha D^\alpha u\,dx\,dt\\
&\quad-\sum_{|\alpha|=m,|\beta|<m} \int_{(0,T)\times \Omega}A^{\alpha\beta}D^\beta u D^\alpha u \,dx\,dt\nonumber\\
&\quad+(-1)^{m+1+|\alpha|}\sum_{|\alpha|<m,|\beta|\le m}\int_{(0,T)\times \Omega}A^{\alpha\beta}D^\beta u D^\alpha u \,dx\,dt.\label{eq 8.48}
\end{align}
We use the Schwarz inequality, Young's inequality, and the interpolation inequality to bound the right-hand side by
\begin{align*}
 &N(n,m,d,\delta,K,\Omega, \varepsilon) (\sum_{|\alpha|\le m}\|f_\alpha\|^2_{L_2((0,T)\times \Omega)}+\|u\|^2_{L_2((0,T)\times \Omega)})\\&+\varepsilon\|D^m u\|^2_{L_2((0,T)\times \Omega)}.
\end{align*}
After taking $\varepsilon$ sufficiently small to absorb the term $\varepsilon\|D^m u\|^2_{L_2((0,T)\times \Omega)}$ to the left-hand side of \eqref{eq 8.48} and choosing $\kappa$ sufficiently large, we reach \eqref{l2control}.
Combining \eqref{l2control} and \eqref{finaldiv}, we get the following lemma.
\begin{lemma}\label{lm8.4}
Assume that $\l, \Omega$, and $f_\alpha$ satisfy all the conditions in Theorem \ref{thm schauder} and $u \in C_{loc}^\infty(\mathbb{R}^{d+1}) $ satisfies \eqref{exist}, with the conormal derivative boundary condition {on $(0,T)\times \partial\Omega$}, the zero initial condition on $\{0\}\times \Omega$ and $\kappa$ large enough,
then
\begin{equation}\label{finexist}
\|u\|_{\frac{{a+m}}{2m},{a+m},{(0,T)\times \Omega}} \le NG,
\end{equation}
where $N=N(d, m, n, \delta,K,\|A^{\alpha\beta}\|_{\frac{a}{2m},a},\Omega)$.
\end{lemma}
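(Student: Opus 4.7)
The plan is to derive the claimed estimate by directly combining the two bounds established in the paragraphs immediately preceding the lemma, namely the Schauder inequality \eqref{finaldiv} and the $L_2$-bound \eqref{l2control}. Since \eqref{finaldiv} was obtained for the system of Theorem \ref{thm schauder} (that is, without the $\kappa u$ term), my first step is to verify that it remains valid for the shifted system \eqref{exist}. This is straightforward: introducing $\tilde{\l}=\l+\kappa$ amounts to replacing the zeroth-order coefficient by one that differs by the constant $\kappa I_{n\times n}$, which clearly preserves both the strong ellipticity \eqref{strong} of the leading coefficients and the $C^{\frac{a}{2m},a}$ regularity required of the full coefficient family. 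Consequently, the chain of results culminating in \eqref{finaldiv}---Proposition \ref{final pro}, Proposition \ref{estimate in t}, the interpolation inequalities, and the partition-of-unity and flattening-of-the-boundary arguments---all apply verbatim to \eqref{exist}, producing the bound with a constant $N$ that may now additionally depend on $\kappa$. Since $\kappa$ is itself a fixed constant, this extra dependence is harmless.

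With \eqref{finaldiv} in hand for the solution of \eqref{exist}, the rest is bookkeeping. I substitute \eqref{l2control} into the right-hand side of \eqref{finaldiv} to eliminate the $\|u\|_{L_2}$ term, yielding
\[
\|u\|_{\frac{a+m}{2m},a+m,(0,T)\times \Omega}\le N(\|u\|_{L_2((0,T)\times \Omega)}+G)\le NG,
\]
which is exactly \eqref{finexist}. For completeness, the $L_2$ bound \eqref{l2control} was obtained via an energy estimate: pairing the equation with $u$, integrating by parts so that the boundary terms vanish thanks to the conormal condition, invoking the strong ellipticity, and then using Young's inequality together with an interpolation inequality to absorb cross terms coming from lower-order coefficients, after which $\kappa$ is chosen large enough to absorb the remaining $\|u\|_{L_2}^2$ on the right.

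There is essentially no analytic obstacle here: the lemma is a packaging step that collects the two previously established a priori estimates into a single bound whose right-hand side no longer involves $u$. This decoupling is precisely what will make the standard method of continuity applicable in the subsequent existence argument for Theorem \ref{thm schauder}, which is presumably the reason for isolating it as a lemma.
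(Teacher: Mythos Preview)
Your proposal is correct and follows exactly the paper's approach: the lemma is simply the combination of \eqref{finaldiv} and \eqref{l2control}, as the paper states in the sentence immediately preceding it. Your additional observation---that \eqref{finaldiv} carries over to \eqref{exist} because the extra $\kappa u$ is merely a lower-order term---is implicit in the paper and correctly justified.
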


Now we are ready to prove Theorem \ref{thm schauder}.
\begin{proof}[Proof of Theorem \ref{thm schauder}]
  We only need to prove the solvability. Since $u$ satisfies \eqref{exist} with the conormal derivative boundary condition on $(0,T)\times\partial \Omega$ {and the zero initial condition on $\{0\}\times \Omega$,} the function $v:=e^{-\kappa t}u$ satisfies
\begin{equation*}
v_t+(-1)^m\l v+\kappa v=e^{-\kappa t}\sum_{|\alpha|\le m}D^\alpha f_\alpha
\end{equation*}
with the corresponding conormal derivative boundary condition and initial condition.
We then reduce the problem to the solvability of $v$. By Lemma \ref{lm8.4}, \eqref{finexist} holds for $v$. Consider the following equation
 \begin{equation*}
 v_t+(-1)^m(s\l v+(1-s)\delta_{ij}\Delta ^{m} v)+\kappa v=e^{-\kappa t}\sum_{|\alpha|\le m}D^\alpha f_\alpha
 \end{equation*}
with the same boundary condition and initial condition, where the parameter $s \in [0,1]$.  It is known that when $s=0$ there is a unique solution in $C^{\frac{a+m}{2m}, a+m}((0,T)\times \Omega)$. Then by the method of continuity  and the a priori estimate \eqref{finexist}, we find a solution when $s=1$.  The theorem is proved.
 \end{proof}

\bibliographystyle{plain}

\def\cprime{$'$}\def\cprime{$'$} \def\cprime{$'$} \def\cprime{$'$}
  \def\cprime{$'$} \def\cprime{$'$}

\end{document}